\documentclass[11pt]{article}
\usepackage{geometry}
\geometry{a4paper, margin=2.4cm}
\usepackage{amsmath,amssymb,amsthm,makeidx,dsfont}
\usepackage{tikz}
\usepackage{float}
\usepackage{subcaption}
\usetikzlibrary{arrows.meta,calc}
\usepackage{charter}
\usepackage{xcolor}
\usepackage[nottoc]{tocbibind} 
\usepackage[pagebackref=true]{hyperref}
\renewcommand*\backref[1]{\ifx#1\relax \else \mbox{\textcolor{gray}{Page #1}} \fi}
\usepackage{cite}
\hypersetup{
	colorlinks,%
	citecolor=blue,%
	filecolor=black,%
	urlcolor=black,%
	linkcolor=blue
}

\newcommand\blfootnote[1]{%
	\begingroup
	\renewcommand\thefootnote{}\footnote{#1}%
	\addtocounter{footnote}{-1}%
	\endgroup
}

\newtheorem{theo}{Theorem}[section]
\newtheorem{prop}[theo]{Proposition}
\newtheorem{lem}[theo]{Lemma}
\newtheorem{cor}[theo]{Corollary}

\theoremstyle{definition} 



\theoremstyle{definition}

\newcommand{\N}{\mathbb{N}}

\newcommand{\R}{\mathbb{R}}

\usepackage{mathtools}

\newcommand{\fconvfk}{{\mathrm{Conv}(\R^k; \R)}} 
\newcommand{\fconvf}{{\mathrm{Conv}(\R^n; \R)}} 
\newcommand{\fconvfz}{{\mathrm{Conv}_{(o)}(\R^n; \R)}} 

\newcommand{\s}{\mathbb{S}}
\newcommand{\sn}{\mathbb{S}^{n-1}}

\newcommand{\K}{\mathcal{K}}
\newcommand{\Kn}{\K^n}

\newcommand{\cR}{\operatorname{\mathcal R}}

\newcommand{\oZ}{\operatorname{Z}}
\newcommand{\oz}{\operatorname{z}}
\newcommand{\oV}{\operatorname{V}}

\newcommand{\ot}{\operatorname{t}}

\newcommand{\MA}{\mathrm{MA}} 

\DeclareMathOperator{\SO}{\operatorname{SO}}
\newcommand{\SOn}{\SO(n)}

\newcommand{\hm}{\mathcal H} 
\renewcommand{\d}{\,\mathrm{d}} 
\newcommand{\Hess}{{\operatorname{D}}^2} 

\title{Additive Kinematic Formulas for Functional Minkowski Vectors}
\author{Mohamed A. Mouamine}
\date{}

\begin{document}

\maketitle
\begin{abstract}
		We establish an additive kinematic formula for the functional Minkowski vectors using mixed Monge–Amp\`ere measures. These vectors, recently introduced and characterized by the author and F. Mussnig, form a natural family of vector-valued valuations on the space of convex functions. This result represents the first integral geometric application of this characterization.

		\blfootnote{{\bf 2020 AMS subject classification:} 	52A22 (26B12, 26B15, 26B25, 52A39, 52A41)}
		\blfootnote{{\bf Keywords:} Additive kinematic formula, Minkowski vector, valuation, convex function}
	\end{abstract}
	
\section{Introduction}
    Let $\Kn$ denote the set of \emph{convex bodies} in $\R^n$, that is, the set of non-empty, compact, and convex subsets of $\R^n$ where $n \in \N$. One of the most relevant functionals defined on $\Kn$ is the volume of a convex body $K$, meaning its $n$-dimensional Lebesgue measure that we denote by $V_n(K)$.
    
    Let $B^n$ denote the $n$-dimensional Euclidean unit ball in $\R^n$. For $\lambda > 0$, we have the following Steiner formula
        \begin{align*}
            V_n(K + \lambda B^n) = \sum_{j = 0}^n \lambda^{n-j}\kappa_{n-j} V_j(K)
        \end{align*}
    where $\kappa_k$ is the $k$-dimensional volume of the unit ball in $\R^k$ and 
        \begin{align*}
            K + L = \{ x + y ; \ x \in K \ \text{and} \ y \in L\}
        \end{align*}
    denotes the Minkowski sum of $K$ and $L$. The functionals $V_j$ are called the intrinsic volumes. An important formula involving the intrinsic volumes in integral geometry is given in the following theorem, which is known as the additive kinematic formula.
    \begin{theo}[\cite{hug_weil_2020}, Theorem 5.13]\label{additive_kinematic_formula_bodies}
        Let $0 \leq j \leq n$ and $K, L \in \Kn$. Then 
        \begin{align*}
            \int_{\SOn} V_j(K + \vartheta L ) \d\vartheta = \sum_{k = 0}^j \frac{\binom{2n-j}{n-j}\kappa_{n-k} \kappa_{n+k-j}}{\binom{2n-j}{n-k}\kappa_n \kappa_{n-j}} V_k(K)V_{j-k}(L),
        \end{align*}
         where we denote by $\SOn$ the group of special orthogonal transformations on $\R^n$, and $\d\vartheta$ denotes integration with respect to the Haar probability measure on $\SO(n)$.
    \end{theo}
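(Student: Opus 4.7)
The plan is to use Hadwiger's characterization theorem twice, in the spirit of the classical template theorem proofs in integral geometry. Fix $L \in \Kn$ and consider the functional
\[
\Phi_L(K) = \int_{\SOn} V_j(K + \vartheta L)\d\vartheta.
\]
First I would verify that $\Phi_L$ is a continuous, translation invariant, rotation invariant valuation on $\Kn$. Continuity follows from the continuity of $V_j$ and the Minkowski sum together with dominated convergence on the compact group $\SOn$. The valuation property of $V_j$ is preserved under the affine map $K \mapsto K + \vartheta L$ (since $(K_1 \cup K_2) + \vartheta L = (K_1 + \vartheta L) \cup (K_2 + \vartheta L)$ and analogously for intersections when $K_1 \cup K_2$ is convex), and the integral preserves valuations. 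Translation invariance is inherited from $V_j$, while rotation invariance comes from the bi-invariance of the Haar measure on $\SOn$. By Hadwiger's characterization theorem,
\[
\Phi_L(K) = \sum_{k=0}^n a_k(L)\, V_k(K)
\]
for uniquely determined coefficients $a_k(L)$ depending on $L$.

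Next I would study the maps $L \mapsto a_k(L)$. By symmetry of the construction (replacing $\vartheta$ with $\vartheta^{-1}$ and applying Fubini) and by the same valuation/continuity/invariance arguments, each $a_k$ is itself a continuous, translation and rotation invariant valuation on $\Kn$, so Hadwiger again gives
\[
a_k(L) = \sum_{\ell = 0}^n c_{k,\ell}\, V_\ell(L)
\]
with real constants $c_{k,\ell}$. Homogeneity then forces the sum to collapse: since $V_j$ is homogeneous of degree $j$ and $V_k$ of degree $k$, the scaling identity $\Phi_{\lambda L}(\mu K) = \Phi_L(\mu K + \lambda L')$ adapted via $V_j(\mu K + \lambda \vartheta L)$ shows that on the right hand side only terms with $k + \ell = j$ survive. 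Hence
\[
\Phi_L(K) = \sum_{k=0}^{j} c_k\, V_k(K)\, V_{j-k}(L)
\]
for constants $c_k$ (independent of $K$ and $L$) which must then be shown to agree with the claimed expression $\binom{2n-j}{n-j}\kappa_{n-k}\kappa_{n+k-j} / \bigl(\binom{2n-j}{n-k}\kappa_n\kappa_{n-j}\bigr)$.

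Finally, the constants are pinned down by a templating calculation: evaluate both sides on the test family $K = r B^n$, $L = s B^n$. On the left, the rotation integral trivializes because $\vartheta L = L$, and the Steiner formula gives
\[
V_j((r+s)B^n) = \binom{n}{j}(r+s)^{n-j}\kappa_{n-j}^{-1}\cdots
\]
expanded as a polynomial in $r,s$. On the right, the same Steiner formula computes $V_k(rB^n)V_{j-k}(sB^n)$ explicitly. Matching coefficients of $r^a s^b$ reduces the identification of $c_k$ to a finite combinatorial identity in binomial coefficients and the $\kappa_i$'s. I expect this last algebraic matching to be the main obstacle: the individual steps are mechanical, but the binomial identity underlying the explicit constant is sharp and must be verified carefully, typically by rewriting both sides using the beta function identity $\kappa_n = \pi^{n/2}/\Gamma(1+n/2)$ and Vandermonde-type manipulations.
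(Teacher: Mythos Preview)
The paper does not actually prove this theorem: it is quoted from \cite{hug_weil_2020} and the only comment on its proof is the sentence ``An elegant proof of this result relies on Hadwiger's characterization theorem.'' Your proposal is precisely that Hadwiger-based template argument, carried out in the standard way (apply Hadwiger in $K$, then in $L$, collapse by homogeneity, compute the constants on balls), so there is nothing to compare against and your outline is in line with what the paper has in mind.
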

     An elegant proof of this result relies on Hadwiger's characterization theorem, which is given below, through the notion of a \emph{valuation}. We say that a map $\oZ : \Kn \rightarrow \R$ is a valuation if the following holds for every $K, L \in \Kn$ such that $ K \cup L \in \Kn$,
    \begin{align*}
        \oZ(K \cup L) + \oZ(K \cap L) = \oZ(K) + \oZ(L).
    \end{align*} 

    \begin{theo}[\cite{hadwiger}, Satz IV, \cite{schneider_cb}, Theorem 6.4.14]\label{hadwiger_theorem}
       A map $\oZ: \Kn \rightarrow \R$ is a continuous, translation and rotation invariant valuation if and only if there exists $c_0,\dots,c_n \in \R$ such that 
       \begin{align*}
           \oZ(K) = \sum_{j=0}^n c_j V_j(K)
       \end{align*}
  for every $K \in \Kn$.
    \end{theo}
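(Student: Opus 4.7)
The plan is to argue by induction on the dimension $n$. The easy direction---that any linear combination $\sum_{j=0}^n c_j V_j$ defines a continuous, translation and rotation invariant valuation---follows directly from the classical properties of the intrinsic volumes encoded in the Steiner formula stated above. For the converse, the base case $n = 0$ is immediate since $\K^0$ consists of a single point, and $n = 1$ follows by showing that any continuous translation-invariant valuation on compact intervals is an affine function of length, an elementary consequence of the valuation identity applied to two adjacent intervals together with continuity.

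For the inductive step, assume the statement in dimension $n-1$ and let $\oZ$ be a continuous, translation and rotation invariant valuation on $\Kn$. Fix a hyperplane $H \subset \R^n$ through the origin and restrict $\oZ$ to convex bodies contained in $H$; this yields a continuous, translation and rotation invariant valuation on $H \cong \R^{n-1}$, which by the inductive hypothesis takes the form $\sum_{j=0}^{n-1} c_j V_j^{(H)}$. Full rotation invariance of $\oZ$ guarantees that these coefficients do not depend on the choice of $H$, and since the intrinsic volumes $V_j$ on $\R^n$ restrict correctly to the intrinsic volumes of $H$ for bodies contained in $H$, subtracting $\sum_{j=0}^{n-1} c_j V_j$ reduces the problem to the case where $\oZ$ is \emph{simple}, in the sense that it vanishes on every convex body of dimension strictly less than $n$.

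The remaining and deepest step, which I expect to be the principal obstacle, is Hadwiger's volume theorem: every continuous, translation and rotation invariant, simple valuation on $\Kn$ is a constant multiple of $V_n$. I would follow the approach of Klain, decomposing $\oZ$ into its even and odd components under reflections. The odd component is killed using Schneider's theorem, which asserts that every odd, simple, continuous, translation-invariant valuation vanishes. The even component is determined by its associated Klain function on the Grassmannian of hyperplanes, and rotation invariance forces this function to be constant; together with simplicity and a normalization on the unit cube, this pins $\oZ$ down as a scalar multiple of $V_n$, yielding the desired constant $c_n$ and closing the induction.
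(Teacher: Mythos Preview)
The paper does not prove this theorem: Theorem~\ref{hadwiger_theorem} is quoted as a classical result from Hadwiger's monograph \cite{hadwiger} and is used only as background to motivate the functional analogue. There is therefore no proof in the paper to compare against.

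Your outline is nonetheless a faithful sketch of Klain's proof of Hadwiger's theorem (see \cite{klain_rota}), and the strategy is sound. One small correction: Schneider's theorem does not say that every odd, simple, continuous, translation-invariant valuation vanishes. It says that such a valuation has the form $K\mapsto \int_{\sn} g(u)\, \d S_{n-1}(K,u)$ for some odd continuous function $g$ on $\sn$. It is the additional rotation invariance that forces $g$ to be constant, and an odd constant function is zero; so the conclusion you draw is correct, but the attribution of the vanishing to Schneider's theorem alone is inaccurate. Apart from this, the reduction to the simple case via the inductive hypothesis on hyperplanes, and the handling of the even part through the Klain function, are exactly as in Klain's argument.
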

    Here, the continuity of operators on $\Kn$ is understood to be with respect to the Hausdorff metric; see \cite[Section 1.8]{schneider_cb}. 
    The map $\oZ$ is said to be translation invariant if $\oZ(K + x) = \oZ(K)$ for all $K \in \Kn$ and for $x \in \R^n$ and is said to be rotation invariant if $\oZ(\vartheta K) = \oZ(K)$ for all $K \in \Kn$ and all $\vartheta \in \SOn$. 

    We point out that other additive kinematic formulas were derived for more general valuations in \cite{bernig_hug_2018, hug_schneider, hug_mussnig_ulivelli_cjm} and for different groups that act on the space of convex bodies in \cite{bernig_fu_2011, brauner_hofstaetter_ortega-moreno_zonal}. For general references on integral geometry, see \cite{schneider_weil,klain_rota}.
    
    This article aims to establish an additive kinematic formula for the functional Minkowski vectors introduced in \cite{mouamine_mussnig_1} and characterized in \cite{mouamine_mussnig_2}. Let 
    \begin{align*}
        \fconvf\coloneqq \{v : \R^n \rightarrow \R : v\ \text{is convex}\}
    \end{align*}
    denote the space of (finite-valued) convex functions on $\R^n$. We equip this space and its subspaces with the topology of \emph{epi-convergence} and note that in this setting, this topology coincides with the topology of pointwise convergence. 
    We observe that the support function of a convex body $K \in \Kn$, denoted by $h_K$, is an element of $\fconvf$, and it is given by 
    \begin{align*}
     h_K(x) = \max_{y \in K} \langle x,y\rangle
    \end{align*}
    where $\langle \cdot,\cdot \rangle $ denotes the standard inner
    product on $\R^n$. We remark that $h_{B^n}(x) = |x|$ where $|x|$ denotes the Euclidean norm of $x \in \R^n$.

    Let $C_b((0,\infty))$ denote the space of continuous functions with bounded support on $(0,\infty)$.
    If $\alpha \in C_b((0,\infty))$ satisfies
    \[\lim_{s \rightarrow 0^+}\alpha(s)s = 0,\]
    then the functional Minkowski vectors are operators of the form
    \begin{align*}
        v \rightarrow \int_{\R^n}\alpha(|x|)x \d\MA(v[j],h_{B^n}[n-j];x)
    \end{align*}
    where $v \in \fconvf$, and $\alpha \in C_b((0,\infty)) $ is such that $\lim_{s \rightarrow 0^+}\alpha(s)s = 0$. Here, $\MA(v_1,\dots,v_n;\cdot)$ denotes the \emph{mixed Monge--Amp\`ere measure} of the functions $v_1,\ldots,v_n \in \fconvf$, and the notation $v[j]$ means that the function $v$ is repeated $j$ times, while $h_{B^n}[n-j]$ means that $h_{B^n}$ is repeated $n-j$ times. Under $C^2$ assumptions on the entries, the mixed Monge--Amp\`ere  measure is absolutely continuous with respect to the Lebesgue measure and
    \begin{align*}
        \d\MA(v_{1},\dots,v_{n}; x) = \det(\Hess  v_1(x),\dots,\Hess v_n(x))\d x
    \end{align*}
    where $\Hess v_i(x)$ denotes the Hessian matrix of $v_i$ at $x$, for $i \in \{1,\dots,n\}$ and where 
    \[\det: (\R^{n\times n})^n \rightarrow \R\]
    denotes the mixed discriminant. We use the abbreviated notation
    
    \begin{align}\label{monge_ampere_j}
        \MA_j(v;\cdot) \coloneqq\MA(v[j],h_{B^n}[n-j];\cdot).  
    \end{align}
    
    The functional Minkowski vectors vanish identically on support functions of convex bodies, that is, if $K \in \Kn$, then for $j \in \{0,\dots,n-1\}$,
    \begin{align}\label{minkowski_relations}
        \int_{\R^n}\alpha(|x|)x \d\MA(h_K[j],h_{B^n}[n-j];x) \simeq \int_{\sn} z \d S_j(K,z)= 0
    \end{align}
    where the last equality is the Minkowski relation and where $\simeq$ denotes equality up to constant factors, depending on $n$, $j$, and $\alpha$, and $S_j(K,\cdot)$ denotes the $j$-th area measure of $K$; see \cite[(5.30) and (5.52)]{schneider_cb}, and \cite[(4.3)]{mouamine_mussnig_2}.

    We are now ready to state the main theorem of this article. 
\begin{theo}\label{main_theorem_monge_ampere}
    If $0 \leq j \leq n$ and $\alpha \in C_b((0,\infty))$ is such that $\underset{s \rightarrow 0^+}{\lim} \alpha(s)s = 0$, then 
    \begin{equation}\label{additive_kinematic_formula_vector}
    \begin{aligned}
         &\kappa_n \int_{\SOn} \int_{\R^n}\alpha(|y|) y \d\MA_j(u+(v\circ \vartheta^{-1});y ) \d\vartheta   \\
        &=\sum_{k=1}^{j}\binom{j}{k}\int_{\R^n}\int_{\R^n}\alpha\left(\max\{|x|,|y|\}\right)x\d\MA_{j-k}(v;y)\d\MA_k(u;x)
    \end{aligned}
    \end{equation}
    for every $u,v \in \fconvf$.
\end{theo}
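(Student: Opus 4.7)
The approach proceeds in three stages: (i) expand the left-hand side using multilinearity of the mixed Monge--Amp\`ere measure, (ii) eliminate a boundary term via rotational symmetry, and (iii) identify each remaining term using the characterization of functional Minkowski vectors from \cite{mouamine_mussnig_2}.

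For stage (i), multilinearity of $\MA$ yields
\[
\MA_j(u + v\circ\vartheta^{-1};\cdot) = \sum_{k=0}^{j} \binom{j}{k} \MA(u[k], (v\circ\vartheta^{-1})[j-k], h_{B^n}[n-j];\cdot),
\]
so after interchanging the $y$- and $\vartheta$-integrals (Fubini), the left-hand side of \eqref{additive_kinematic_formula_vector} becomes $\kappa_n \sum_{k=0}^{j}\binom{j}{k} I_k$, where
\[
I_k := \int_{\SOn}\int_{\R^n} \alpha(|y|)\, y\, \d\MA(u[k],(v\circ\vartheta^{-1})[j-k], h_{B^n}[n-j];y)\, \d\vartheta.
\]
For stage (ii), the change of variables $y = \vartheta z$ together with the $\SOn$-equivariance of $\MA$ factors the $k=0$ inner integral as $\vartheta\int\alpha(|z|)z\,\d\MA_j(v;z)$. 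Since $\int_{\SOn}\vartheta\,\d\vartheta = 0$ as a linear map on $\R^n$ (the Haar average of rotations of any vector is zero for $n \geq 2$), this gives $I_0 = 0$.

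For stage (iii) and each $k \in \{1,\ldots,j\}$, the map $u \mapsto I_k(u)$ inherits from $\MA$ the structure of a continuous, translation-invariant, $\SOn$-equivariant, $\R^n$-valued valuation on $\fconvf$ which is homogeneous of degree $k$ in $u$. The equivariance follows from Haar invariance: substituting $\sigma = \rho^{-1}\vartheta$ in the integral gives $I_k(u\circ\rho^{-1}) = \rho\cdot I_k(u)$. The characterization of \cite{mouamine_mussnig_2} then forces
\[
I_k(u) = \int_{\R^n}\beta_k(|x|)\, x\, \d\MA_k(u;x)
\]
for some $\beta_k \in C_b((0,\infty))$ with $\lim_{s\to 0^+} s\beta_k(s) = 0$, depending on $v$, $\alpha$, $j$, and $k$. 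Matching this against the right-hand side of \eqref{additive_kinematic_formula_vector} reduces the theorem to the explicit identification
\[
\beta_k(s) = \kappa_n^{-1}\int_{\R^n}\cR_1^{n-k}(\cR^{-n+k}\alpha)(\max\{s,|y|\})\, \d\MA_{j-k}(v;y).
\]

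The main obstacle is this final kernel identification. My plan is to evaluate both sides on a rich test family of functions $u$ --- for instance, quadratic bowls $u(x) = \tfrac{t}{2}|x|^2$ (for which $\MA_k(u;\cdot)$ can be computed in closed form via the elementary symmetric polynomials of the eigenvalues of $\Hess h_{B^n}$) or perturbations of $h_{B^n}$ --- where both the rotational average defining $I_k$ and the right-hand side become explicit integrals. The $\max\{|x|,|y|\}$ structure should emerge from splitting the radial part of the integration into the regimes $|x| \geq |y|$ and $|x| < |y|$; the iterated transform $\cR_1^{n-k}$ reflects the contribution of the $n-k$ copies of $h_{B^n}$ that effectively appear alongside $u$ in $\MA_k(u;\cdot)$ after the multilinear rearrangement, with identity \eqref{R_m} governing this iteration; and the compensating factor $\cR^{-n+k}$ on the input side accounts for the degree shift between the $n-j$ copies of $h_{B^n}$ present on the left-hand side and the $n-k$ copies surfacing on the right.
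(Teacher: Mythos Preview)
Your structural outline---expand by multilinearity, kill the $k=0$ term, then apply the vectorial Hadwiger-type characterization in the $u$-variable---is close to what the paper does (the paper achieves the same bigrading via homogeneity rather than explicit multilinearity, and it additionally applies the \emph{scalar} Hadwiger theorem in the $v$-variable to write $\beta_k^v(s)$ as an integral against $\MA_{j-k}(v;\cdot)$, a step you skip). But the genuine gap is your kernel-identification plan.

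First, quadratic bowls $u(x)=\tfrac{t}{2}|x|^2$ are radially symmetric, so $\MA_k(u;\cdot)$ is rotation invariant and $\int \beta_k(|x|)\,x\,\d\MA_k(u;x)=0$ for \emph{every} $\beta_k$; likewise $I_k(u)=0$ by the same symmetry. Evaluating on this family yields $0=0$ and determines nothing about $\beta_k$. ``Perturbations of $h_{B^n}$'' is too vague to repair this: what is needed is a family that breaks radial symmetry in a controlled way and from which $\beta_k(s)$ can be read off pointwise in $s$.

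Second, even with the right test functions the computation is not light, and your heuristic for why $\max\{|x|,|y|\}$ and $\cR_1^{n-k}\cR^{-n+k}$ should appear does not constitute an argument. The paper proceeds by evaluating both sides at $u=\mu w_s$ and $v=\lambda v_t$, where $w_s$ is the specific non-radial function for which $\ot^*_{k,\xi}(w_s)=\tfrac{1}{n}\binom{n}{k}\kappa_{n-1}s^{n-k+1}\xi(s)\,e_n$ (so that $\xi$, and hence $\beta_k$, is recovered pointwise). The matching computation of $\oz_{j,\xi}(\mu w_s,\lambda v_t)$ is carried out directly at the level of Hessian measures, partitioning $\R^n$ into regions where $\mu w_s+\lambda v_t$ is $C^2$ and singular sets handled via disintegration; it occupies the entire appendix. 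Without either (a) this explicit evaluation or (b) an independent identity for the rotational average of mixed Monge--Amp\`ere measures that produces the $\max$ kernel, your stage (iii) is not a proof but a hope.
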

    Let us point out that the right-hand side of (\ref{additive_kinematic_formula_vector}) is well defined. Indeed, for fixed $y \in \R^n$, set
    \[\alpha_y(s):=\alpha(\max\{s,|y|\}),\qquad s>0.\]
    Then $\alpha_y\in C_b((0,\infty))$ and $\lim_{s\to 0+}s\alpha_y(s)=0$. Thus, by \cite[Lemma 3.4]{mouamine_mussnig_1}, the map
   
    \[x\mapsto \alpha_y(|x|)x=\alpha(\max\{|x|,|y|\})x,\qquad x\neq0,\]
    has a unique continuous extension to $\R^n$, and this extension
    takes the value $0$ at the origin. 
    
    We bring the reader's attention to the fact that D. Hug, F. Mussnig, and J. Ulivelli \cite{hug_mussnig_ulivelli_cjm} established an additive kinematic formula for functional intrinsic volumes on $\fconvf$ which serves as a functional analog to Theorem \ref{additive_kinematic_formula_bodies}. It is given in the following theorem.
\begin{theo}[\cite{hug_mussnig_ulivelli_cjm}, Theorem 1.4]
    If $0 \leq j \leq n$ and $\alpha: [0,\infty) \rightarrow [0,\infty)$ is a measurable function, then 
    \begin{equation}\label{additive_kinematic_formula_scalar}
        \begin{aligned}
             \kappa_n &\int_{\SOn} \int_{\R^n} \alpha(|y|) \d\MA_j(u + (v\circ\vartheta^{-1}); y) \d\vartheta \\
        &= \sum_{i=0}^{j}\binom{j}{i}\int_{\R^n}\int_{\R^n}\alpha(\max\{|x|,|y|\})\d\MA_{j-i}(v; y)\d\MA_i(u; x)
        \end{aligned}
    \end{equation}
   for every $u,v \in \fconvf$.
\end{theo}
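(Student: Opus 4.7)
The strategy is to peel off the identity in three stages -- binomial expansion by multilinearity, smoothing of the entries, and reduction to indicator test functions $\alpha$ -- leaving the resulting ball-restricted kinematic identity as the main technical core. By the Minkowski-additivity of the mixed Monge--Amp\`ere measure in each of its entries,
\begin{equation*}
\d\MA_j(u+v\circ\vartheta^{-1};\cdot) = \sum_{i=0}^{j}\binom{j}{i}\, \d\MA(u[i],(v\circ\vartheta^{-1})[j-i],h_{B^n}[n-j];\cdot).
\end{equation*}
Substituting into \eqref{additive_kinematic_formula_scalar} and swapping integrations by Tonelli (valid since $\alpha\geq 0$), the theorem reduces to proving for each $i\in\{0,\dots,j\}$ the single-term identity
\begin{equation*}
\kappa_n\!\int_{\SOn}\!\!\int_{\R^n}\!\alpha(|y|)\, \d\MA(u[i],(v\circ\vartheta^{-1})[j-i],h_{B^n}[n-j];y)\, \d\vartheta = \int_{\R^n}\!\!\int_{\R^n}\!\alpha(\max\{|x|,|y|\})\, \d\MA_{j-i}(v;y)\, \d\MA_i(u;x).
\end{equation*}

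By weak continuity of mixed Monge--Amp\`ere measures under epi-convergence of their entries, a standard Moreau--Yosida or mollification argument lets me assume $u,v\in C^2$ are strictly convex with controlled growth, where the mixed measure has an explicit density given by the mixed discriminant of Hessians. Since $\alpha(\max\{|x|,|y|\}) = \mathbf{1}_{\{|x|\leq R\}}\mathbf{1}_{\{|y|\leq R\}}$ when $\alpha=\mathbf{1}_{[0,R]}$, and both sides of the single-term identity are linear in $\alpha$, a monotone-class/approximation argument in $\alpha$ further reduces the task to proving, for every $R>0$, the ball-restricted identity
\begin{equation*}
\kappa_n\int_{\SOn}\MA(u[i],(v\circ\vartheta^{-1})[j-i],h_{B^n}[n-j];RB^n)\,\d\vartheta = \MA_i(u;RB^n)\,\MA_{j-i}(v;RB^n).
\end{equation*}

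The crux of the proof is this ball-restricted statement, in which $RB^n$ entangles $u$ with the rotated $v\circ\vartheta^{-1}$. My proposed route is to realize each $\MA_i(w;RB^n)$ as a classical intrinsic volume (up to a normalization constant) of a convex body canonically built from $w|_{RB^n}$ -- for instance, a capped graph body in $\R^{n+1}$ -- in the spirit of the body-functional dictionary of Colesanti--Ludwig--Mussnig. Rotations of $\R^n$ lift to rotations of $\R^{n+1}$ fixing the vertical axis, so the classical additive kinematic formula for convex bodies (Theorem \ref{additive_kinematic_formula_bodies}) applied to these bodies in $\R^{n+1}$ would deliver the required identity, \emph{provided} the graph-body construction intertwines with pointwise function addition in a Minkowski-additive way. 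This compatibility is the subtle point: Minkowski sums of epigraphs correspond to inf-convolution rather than pointwise addition, so either one passes to Legendre conjugates (where $(u+v)^{*}=u^{*}\square v^{*}$ restores the right Minkowski-additive structure) or one carries out a direct invariant-theoretic computation of $\int_{\SOn}\det(\Hess u(y)[i],\vartheta\,\Hess v(\vartheta^{-1}y)\vartheta^{T}[j-i],\Hess h_{B^n}(y)[n-j])\,\d\vartheta$ via a Blaschke--Petkantschin-type formula, whose management of the coupled dependence on $y$ and $\vartheta^{-1}y$ is the real technical challenge.
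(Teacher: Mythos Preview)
This theorem is not proved in the present paper; it is quoted from \cite{hug_mussnig_ulivelli_cjm} as background for the paper's own vector-valued result (Theorem~\ref{main_theorem_monge_ampere}), so there is no in-paper proof to compare your proposal against.

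On the proposal itself: your first two reductions are sound. Multilinearity of the mixed Monge--Amp\`ere measure gives the binomial expansion, and the layer-cake reduction to $\alpha=\mathds{1}_{[0,R]}$ works because $\mathds{1}_{[0,R]}(\max\{|x|,|y|\})=\mathds{1}_{\{|x|\le R\}}\mathds{1}_{\{|y|\le R\}}$ factorizes. The remaining ball-restricted identity, however, is left genuinely open, and you have correctly located the obstruction: any epigraph or graph-body construction converts pointwise addition of functions into inf-convolution rather than Minkowski addition, so Theorem~\ref{additive_kinematic_formula_bodies} does not apply directly. Passing to conjugates exchanges addition and inf-convolution as you note, but the restriction to $RB^n$ (which is what encodes $\alpha$) does not transform cleanly under Legendre duality, so that route is far from immediate. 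The invariant-theoretic alternative is delicate for exactly the reason you flag: the single integration variable $y$ enters both $\Hess u(y)$ and $\Hess v(\vartheta^{-1}y)$, coupling the two functions at the \emph{same} point rather than at independent points as in the classical body formula. As written, the proposal stops at the hard step.

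For orientation, the argument in \cite{hug_mussnig_ulivelli_cjm} follows the template this paper adopts for Theorem~\ref{main_theorem_monge_ampere}: freeze one argument, observe that the left-hand side becomes a continuous, dually epi-translation invariant, rotation invariant valuation in the other, invoke the functional Hadwiger theorem (Theorem~\ref{Hadwiger_decom_volumes}), and then evaluate on the radial test functions $v_t$ via Lemma~\ref{retrieving_densities_for_volumes} to pin down the densities. This bypasses any body--function dictionary and so avoids the addition/inf-convolution mismatch entirely.
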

Although both formulas (\ref{additive_kinematic_formula_vector}) and (\ref{additive_kinematic_formula_scalar}) establish additive kinematic formulas on $\fconvf$, they differ in two important aspects. First, formula (\ref{additive_kinematic_formula_vector}) deals with vector-valued functionals, while formula (\ref{additive_kinematic_formula_scalar}) deals with scalar-valued functionals. Secondly, one can observe that the expression in (\ref{additive_kinematic_formula_scalar}) is symmetric in the functions $u$ and $v$, while formula (\ref{additive_kinematic_formula_vector}) is rotation invariant in $v$ but rotation equivariant in $u$; see Section \ref{proof_main_theorem}.

If we choose $u=h_K$ for $K\in\Kn$ in Theorem \ref{main_theorem_monge_ampere}, then (\ref{minkowski_relations}) shows that (\ref{additive_kinematic_formula_vector}) vanishes. By contrast, substituting the support function of $K$ for $v$ yields the following result.
\begin{cor}\label{substituting_expression}
        If $K \in \Kn$, and $\alpha \in C_b((0,\infty))$ is such that $\underset{s \rightarrow 0^+}{\lim} \alpha(s)s = 0$, then 
        \begin{align*}
            \kappa_n \int_{\SOn} \int_{\R^n}\alpha(|y|) &y \d\MA_j(u+(h_K\circ \vartheta^{-1});y ) \d\vartheta \\&=  \sum_{k=1}^{j}\frac{\binom{j}{k}}{\binom{n}{j-k}}\kappa_{n-j+k}V_{j-k}(K)\int_{\R^n}\alpha(|x|)x\d\MA_k(u;x)
        \end{align*}
    for every $u \in \fconvf$.
    \end{cor}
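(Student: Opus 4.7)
The plan is to specialize Theorem \ref{main_theorem_monge_ampere} to $v=h_K$ and reduce the inner integral in \eqref{additive_kinematic_formula_vector} to a pointwise evaluation. The key input is the identification
\[
\MA_m(h_K;\cdot)\;=\;\frac{\kappa_{n-m}}{\binom{n}{m}}\,V_m(K)\,\delta_0,\qquad 0\le m\le n.
\]
To obtain it, note that for every $t\ge 0$ the function $h_{K+tB^n}=h_K+t\,h_{B^n}$ is positively $1$-homogeneous, so its subdifferential equals $K+tB^n$ at the origin and is contained in the Lebesgue-null boundary of $K+tB^n$ at every other point; Alexandrov's definition therefore gives $\MA(h_{K+tB^n};\cdot)=V_n(K+tB^n)\,\delta_0$. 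Polarizing the left-hand side in $(h_K,h_{B^n})$ and expanding the right-hand side by the classical Steiner formula $V_n(K+tB^n)=\sum_m\kappa_{n-m}V_m(K)\,t^{n-m}$, and then matching the coefficients of $t^{n-m}$, yields the displayed identity.

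With this, the measure $\MA_{j-k}(h_K;\cdot)$ appearing in \eqref{additive_kinematic_formula_vector} is a multiple of $\delta_0$, so the inner $y$-integral collapses to the value at $y=0$. It then remains to identify $\cR_1^{n-k}(\cR^{-n+k}\alpha)(\max\{|x|,0\})$ with $\alpha(|x|)$. Setting $\beta:=\cR^{-n+k}\alpha$ and noting $\max\{s,0\}=s$ for $s>0$, one computes directly
\[
\cR_1\bigl(\beta(\max\{\cdot,0\})\bigr)(s)\;=\;s\beta(s)+\int_s^\infty\beta(r)\,\d r\;=\;\cR\beta(s).
\]
An immediate induction on the number of applications of $\cR_1$ then gives $\cR_1^{n-k}(\beta(\max\{\cdot,0\}))(s)=\cR^{n-k}\beta(s)=\alpha(s)$, using the invertibility of $\cR$ on the relevant class (already relied on in the statement of Theorem \ref{main_theorem_monge_ampere}).

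Combining the two reductions, the $k$-th summand of the right-hand side of \eqref{additive_kinematic_formula_vector} becomes
\[
\binom{j}{k}\cdot\frac{\kappa_{n-j+k}}{\binom{n}{j-k}}\,V_{j-k}(K)\int_{\R^n}\alpha(|x|)\,x\,\d\MA_k(u;x),
\]
which coincides with the right-hand side of the corollary after summing over $k=1,\ldots,j$. The most delicate point I expect is the polarization argument for the Monge--Amp\`ere operator applied to the non-$C^2$ support functions $h_K$ and $h_{B^n}$; this rests on the polynomial dependence in $t$ of $\MA((h_K+t\,h_{B^n})[n];\cdot)$, which is standard for functions in $\fconvf$ but constitutes the core technical step of the argument.
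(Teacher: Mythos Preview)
Your proof is correct and follows essentially the same route as the paper: specialize Theorem~\ref{main_theorem_monge_ampere} to $v=h_K$, use that $\MA_{j-k}(h_K;\cdot)$ is a multiple of $\delta_0$, and observe that the $\cR$-transforms cancel at $|y|=0$. The paper invokes Lemma~\ref{monge_ampere_for_support} (quoted from \cite{hug_mussnig_ulivelli_support}) for the identity $\binom{n}{m}\MA_m(h_K;\cdot)=\kappa_{n-m}V_m(K)\,\delta_0$, whereas you supply a clean self-contained derivation via $\MA(h_{K+tB^n};\cdot)=V_n(K+tB^n)\,\delta_0$ and polarization against the Steiner polynomial; you are also more explicit than the paper's one-line proof about why $\cR_1^{n-k}\bigl(\cR^{-n+k}\alpha\bigr)(\max\{\cdot,0\})=\alpha$, carrying out the induction that the paper leaves implicit.
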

\section{Preliminaries}
    We work in $n$-dimensional Euclidean space $\R^n$, with $n \geq 1$, endowed with the Euclidean norm $|\cdot|$ and the usual scalar product $\langle \cdot, \cdot \rangle$. We write $e_1,\ldots, e_n$ for the canonical basis vectors of $\R^n$ and use the coordinates, $x = (x_1,\ldots, x_n)$, for $x  \in \R^n$ with respect to this basis.
    
    Let $B^n := \{x \in \R^n : |x| \leq 1\}$ be the Euclidean unit ball and $\sn$ the unit sphere in $\R^n$. We denote by $\kappa_n$ the $n$-dimensional volume of $B^n$ and by $\omega_n$ the $(n-1)$-dimensional Hausdorff measure, that we denote by $\hm ^{n-1}$, of $\sn$. In this section, we will collect some results on (finite-valued) convex functions. We refer to \cite{rockafellar,rockafellar_wets} for general references.
    
\subsection*{Convex functions} Let $ \partial v(x)$ denote the subdifferential of the function $ v\in \fconvf$ at $x$ given by 
    \begin{align*}
        \partial v(x) = \{ y \in \R^n : v(z) \geq v(x) + \langle y, z-x \rangle  \  \forall z \in \R^n\}.
    \end{align*}
    This set is closed and convex for any $x \in \R^n$, and the function $v$ is differentiable if and only if the set $\partial v(x)$ contains exactly one element, namely the gradient of $v$ at $x$, that is $\nabla v(x)$. We recall that, if $u$ and $v$ are elements of $\fconvf$, then, for every $x \in \R^n$,
     
     \begin{equation}\label{singular_point_addition}
          \partial(u+ v)(x) = \partial u(x) + \partial v(x).
     \end{equation}
    
     See, for instance, \cite[Theorem 23.8]{rockafellar}.
     
     For finite-valued convex functions, epi-convergence agrees with pointwise convergence and addition is continuous with respect to this topology. In particular, if \(u_i\to u\) and \(v_i\to v\) epi-converge in $\fconvf$ then \(u_i+v_i\to u+v\) epi-converges; see \cite[Theorem 7.46]{rockafellar_wets}. 
     
    Another result that will be needed is the following.
     \begin{lem}[\cite{colesanti_ludwig_mussnig_6}, Lemma 3.3]\label{JCM}
        The map 
        \begin{align*}
            (\vartheta,v) \rightarrow v \circ \vartheta^{-1}
        \end{align*}
        is jointly continuous on $\SOn \times \fconvf$.
    \end{lem}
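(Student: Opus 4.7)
The plan is to specialize Theorem \ref{main_theorem_monge_ampere} to the case $v=h_K$ and evaluate, for each $k\in\{1,\dots,j\}$, the resulting inner $y$-integral against $\MA_{j-k}(h_K;\cdot)$. Substituting $v=h_K$ into (\ref{additive_kinematic_formula_vector}), the left-hand side of the corollary becomes
$$\sum_{k=1}^{j}\binom{j}{k}\int_{\R^n}\Phi_k(|x|)\,x\,d\MA_k(u;x),\qquad \Phi_k(s):=\int_{\R^n}\cR^{n-k}_1\bigl(\cR^{-n+k}\alpha\bigr)(\max\{s,|y|\})\,d\MA_{j-k}(h_K;y),$$
where the kernel depends on $x$ only through $|x|$ so that $\Phi_k$ is indeed a function of $s\geq 0$. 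The task thus reduces to establishing, for every $k\in\{1,\dots,j\}$ and $s\geq 0$, the identity
$$\Phi_k(s)=\binom{n}{j-k}^{-1}\kappa_{n-j+k}\,V_{j-k}(K)\,\alpha(s).$$

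To evaluate $\Phi_k$, I would exploit the $1$-homogeneity of $h_K$ to decompose $\MA(h_K[j-k],h_{B^n}[n-j+k];\cdot)$ in polar coordinates. Writing $y=rz$ with $r\geq 0$ and $z\in\sn$, this mixed measure factors as a radial density proportional to $r^{n-j+k-1}\,dr$ (coming from the $n-j+k$ copies of $h_{B^n}$, whose contribution to the mixed discriminant scales as a fixed power of $r$ along each ray) tensored with the classical area measure $S_{j-k}(K,\cdot)$ on $\sn$; this is precisely the point of view underlying the Minkowski relation cited in (\ref{minkowski_relations}). The angular integration then yields a known multiple of $V_{j-k}(K)$, while the radial integration produces
$$\int_0^{\infty}\cR^{n-k}_1\bigl(\cR^{-n+k}\alpha\bigr)(\max\{s,r\})\,r^{n-j+k-1}\,dr.$$
Applying (\ref{R_m}) and Fubini, the iterated transforms unwind: the formal $\cR^{-n+k}$ is tailored so that integrating $\cR^{n-k}_1(\cR^{-n+k}\alpha)(\max\{s,\cdot\})$ against $r^{n-j+k-1}\,dr$ returns $\alpha(s)$ up to an explicit constant. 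The boundary case $k=j$ is handled separately, since $\MA_0(h_K;\cdot)=\kappa_n\delta_0$ and the identity reduces to $\Phi_j(s)=\kappa_n\alpha(s)$ with $V_0(K)=1$ and $\binom{n}{0}=1$, which matches.

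The main obstacle is the bookkeeping of constants across three layers: the normalization of the polar decomposition of $\MA_{j-k}(h_K;\cdot)$, the normalization between the total mass of $S_{j-k}(K,\cdot)$ and $V_{j-k}(K)$, and the cancellation between $\cR^{n-k}_1$, $\cR^{-n+k}$, and the radial weight $r^{n-j+k-1}$. One must check that these three contributions combine precisely to yield $\binom{n}{j-k}^{-1}\kappa_{n-j+k}$; a useful consistency check is that the same specialization of the scalar formula (\ref{additive_kinematic_formula_scalar}) to $v=h_K$ produces an analogous radial integral whose constant must agree up to the natural vector-vs-scalar adjustment. Once $\Phi_k$ is computed, reinsertion into the displayed sum gives the corollary immediately.
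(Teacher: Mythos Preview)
Your proposal does not address the stated lemma at all. The statement to be proved is that $(\vartheta,v)\mapsto v\circ\vartheta^{-1}$ is jointly continuous on $\SOn\times\fconvf$; this is a topological fact about epi-convergence (equivalently pointwise convergence) and has nothing to do with mixed Monge--Amp\`ere measures, the transform $\cR$, or support functions. What you have written is instead an outline for Corollary~\ref{substituting_expression}. The paper does not give its own proof of Lemma~\ref{JCM} (it is quoted from \cite{colesanti_ludwig_mussnig_6}); a direct argument would simply observe that if $\vartheta_i\to\vartheta$ in $\SOn$ and $v_i\to v$ pointwise on $\R^n$, then for every $x$ one has $\vartheta_i^{-1}x\to\vartheta^{-1}x$ and, using local uniform convergence of convex functions, $v_i(\vartheta_i^{-1}x)\to v(\vartheta^{-1}x)$.

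Even as a proof of Corollary~\ref{substituting_expression}, your outline goes astray. You assert that $\MA_{j-k}(h_K;\cdot)$ admits a polar decomposition with radial density proportional to $r^{n-j+k-1}\,dr$. This is false: by Lemma~\ref{monge_ampere_for_support} one has $\binom{n}{j-k}\MA_{j-k}(h_K;\cdot)=\kappa_{n-j+k}V_{j-k}(K)\,\delta_0$, a Dirac mass at the origin, not an absolutely continuous measure along rays. The paper's proof of the corollary is accordingly a one-line substitution: since the inner measure is concentrated at $y=0$, the kernel becomes $\cR_1^{n-k}(\cR^{-n+k}\alpha)(\max\{|x|,0\})=\cR^{n-k}(\cR^{-n+k}\alpha)(|x|)=\alpha(|x|)$, and the constants $\binom{n}{j-k}^{-1}\kappa_{n-j+k}V_{j-k}(K)$ fall out directly from Lemma~\ref{monge_ampere_for_support} with no further radial integration or bookkeeping.
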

\subsection*{Hessian measures} To each $ v \in \fconvf$, we assign a family of nonnegative, locally finite Borel measures that we denote by $\Phi^n_j(v;\cdot)$, where $j \in \{0, \ldots, n\}$, through the relation 
    \begin{align*}
        \hm ^{n}( \{x + sy : x \in B, y \in \partial v(x)\}) = \sum_{j=0}^{n} s^j \Phi^n_j(v;B)
    \end{align*}
    for every Borel set $ B \subset \R^n$ and every $s\geq 0 $. They are called the \emph{Hessian measures} associated with $v$. Furthermore, if $v$ is $C^2$ on some open set $A \subset \R^n$, then $\Phi^n_j(v;\cdot)$ is absolutely continuous with respect to the Lebesgue measure and
    \begin{align}\label{hessian_abs_cont_lebesgue}
        \d\Phi^n_j(v;x) = [\Hess  v(x)]_j \d x
    \end{align}
    on the open set $A$ for all $0 \leq j \leq n$. Here, $\Hess v$ denotes the Hessian matrix of $v$ at $x \in \R^n$.If \(A\in\mathbb R^{n\times n}\) is symmetric and has eigenvalues \(\lambda_1,\ldots,\lambda_n\), we set
    \[[A]_0:=1, \qquad [A]_j:=\sum_{1\leq i_1<\cdots<i_j\leq n}\lambda_{i_1}\cdots\lambda_{i_j},\qquad 1\leq j\leq n.
    \]
   
    We refer the reader to \cite{colesanti_ludwig_mussnig_3, colesanti_1997, trudinger_wang_hessian_ii} for more results and details on Hessian measures. In \cite{knoerr_monge_ampere}, it was established that the Hessian measures are locally determined, that is, if $u, v \in \fconvf$ are such that $u \equiv v$ on some open set $A \subset \R^n$, then 
        \begin{align}\label{locally_determined}
            \Phi^n_j(u; A \cap B) = \Phi^n_j(v; A \cap B)
        \end{align}
    for every relatively compact Borel set $B \subset \R^n$.
    
 We say that a map $\oZ:\fconvf \rightarrow  \R $ is a valuation if 
    \begin{align*}
    \oZ( v \vee w ) + \oZ(v \wedge w ) = \oZ(v) + \oZ(w) 
    \end{align*}
    for all $v, w \in \fconvf$ such that their pointwise maximum $ v \vee w$ and their pointwise minimum $ v \wedge w$ are elements of $\fconvf$.
    
     A valuation $\oZ$ is called \emph{dually epi-translation invariant} if
     \[\oZ(v+ f )= \oZ(v) \quad \text{for every }\  v \in \fconvf,\ \text{for every affine map}\ f: \R^n \rightarrow \R.\] 
     It is said to be \emph{rotation invariant} if 
     
     \[\oZ(v\circ \vartheta^{-1}) = \oZ(v) \quad \text{for every}\ v \in \fconvf,\ \text{for every}\  \vartheta \in \SOn,\]
     and a valuation $\ot$ is said to be \emph{rotation equivariant} if \[\ot (v \circ \vartheta^{-1}) = \vartheta \ot(v)\quad \text{for every}\ v \in \fconvf,\ \text{for every}\  \vartheta \in \SOn.\]
     Lastly, the map $\oZ$ is said to be homogeneous of degree $j$ if $\oZ(\lambda v) = \lambda^j \oZ(v)$ for all $v \in \fconvf$ and $\lambda > 0$. 
     
     Let 
    \begin{align*}
        D^n_j \coloneqq\{ \xi  \in C_b((0,\infty)) \ ;\ \underset{{r \rightarrow 0^+}}{\lim}r^{n-j}\xi(r) = 0 \ \text{and} \ \underset{{r \rightarrow 0^+}}{\lim}\int_{r}^{\infty} p^{n-j-1} \xi(p)\d p \ \text{exists and is finite} \}
    \end{align*}
     for $0 \leq j \leq n-1$ and $D^n_n \coloneqq\{ \xi  \in C_b((0,\infty)) \ ;\ \underset{{r \rightarrow 0^+}}{\lim}\xi(r) \ \text{exists and is finite}\} $.
     
   The functional counterparts of the intrinsic volumes were introduced by A. Colesanti, M. Ludwig, and F. Mussnig in \mbox{\cite{colesanti_ludwig_mussnig_5}}, and for  $v \in \fconvf \cap C^2(\R^n) $ and $\xi_i \in D^n_i$, they are given by
   \begin{equation}\label{decomposition_volumes_v_C2_hessian}
       \oV_{i,\xi_i}^*(v) = \int_{\R^n}\xi_i(|x|)[\Hess v(x)]_i\d x, \quad i \in \{0,\dots,n\}.
    \end{equation}
   For $\xi_i\in D_i^n$, the functional defined by \eqref{decomposition_volumes_v_C2_hessian} admits a unique continuous extension to all of $\fconvf$ where we consider spaces of convex functions together with the topology associated with epi-convergence. The measures $[\Hess v(x)]_i \d x$ are then replaced by the Hessian measures defined in (\ref{hessian_abs_cont_lebesgue}).

    In the following theorem, we recall the functional version of Hadwiger's Theorem \ref{hadwiger_theorem} on $\fconvf$. It was first established in \cite{colesanti_ludwig_mussnig_5}, and was also obtained by J. Knoerr in \cite{knoerr_singular}.
    
    \begin{theo}[\cite{colesanti_ludwig_mussnig_5}, Theorem 1.5]\label{Hadwiger_decom_volumes}
    A functional $\oZ: \fconvf \rightarrow \R$ is a continuous, dually epi-translation invariant, rotation invariant
    valuation if and only if there exist functions $\xi_0 \in D^n_0, \dots, \xi_n \in D^n_n$ such that 
    \begin{align*}
        \oZ(v) = \sum_{i = 0}^n\oV_{i,\xi_i}^*(v)
    \end{align*}
    for every $v \in \fconvf$.
    \end{theo}

    Let 
    \begin{align*}
        \fconvfz\coloneqq \{ v \in \fconvf : v \ \text{is of class} \ C^2 \ \text{around the origin}\}.
    \end{align*}
    Using Hessian measures, the representation given in (\ref{decomposition_volumes_v_C2_hessian}) also holds for $v \in \fconvfz$. That is,
    \begin{align}\label{representation_scalar_fconvfz}
        \oV_{i,\xi_i}^*(v)=   \int_{\R^n} \xi_i(|x|) \d\Phi^n_i(v;x)
    \end{align}
    for every $v \in \fconvfz$ where $\xi_i \in D^n_i$ for each $i \in \{0,\dots,n\}$. See \cite[Lemma 3.23]{colesanti_ludwig_mussnig_5}.

    Very recently, together with F. Mussnig, we introduced a new family of vector-valued valuations on $\fconvf$ and characterized them. These valuations are called \emph{functional Minkowski vectors}. They arise from applying a Steiner formula to a functional moment vector defined on $\fconvf$. See \cite{mouamine_mussnig_1, knoerr_ulivelli_preprint_2024}. 

    For $0 \leq j \leq n$, let 
    \begin{align*}
        T^n_j \coloneqq \{\xi  \in C_b((0,\infty)) \ ;\ \underset{{r \rightarrow 0^+}}{\lim}r^{n-j+1}\xi(r) = 0\}.
    \end{align*}
   For  $v \in \fconvf \cap C^2(\R^n) $ and $\xi_k \in T^n_k$, the functional Minkowski vectors are given by
   \begin{equation}\label{decomposition_vectors_v_C2_hessian}
           \ot_{k,\xi_k}^*(v) = \int_{\R^n} \xi_k(|x|)x [\Hess v(x)]_k\d x, \quad k \in \{1,\dots,n\}.
    \end{equation}
   For $\xi_k\in T_k^n$, the functional defined by \eqref{decomposition_vectors_v_C2_hessian} admits a unique continuous extension to all of $\fconvf$. In the theorem below, we recall the characterization result that was obtained in \cite{mouamine_mussnig_2}.
    \begin{theo}[\cite{mouamine_mussnig_2}, Theorem C]\label{Hadwiger_decom_vector}
        A functional  $\ot: \fconvf \rightarrow \R^n$ is a continuous, dually epi-translation invariant, rotation equivariant valuation if and only if there exist functions $\xi_1 \in T^n_1, \dots, \xi_n \in T^n_n$ such that 
        \begin{align*}
            \ot(v) = \sum_{k = 1}^n \ot_{k,\xi_k}^*(v)
        \end{align*}
        for every $v \in \fconvf$.
\end{theo}
    
    The Hessian measures allow us to extend the representation given in (\ref{decomposition_vectors_v_C2_hessian}) to $v \in \fconvfz$; see \cite[Lemma 4.11]{mouamine_mussnig_2}. This means that 
     \begin{align}\label{representation_vector_fconvfz}
       \ot_{k,\xi_k}^*(v)=  \int_{\R^n} \xi_k(|x|)x  \d\Phi^n_k(v;x)
    \end{align}
    for every $v \in \fconvfz$ where $\xi_k \in T^n_k$ for every $k \in \{1,\dots,n\}$.

    For $x \in \R^n$, we denote by $|x|_{n-1} := \sqrt{x_1^2 + \cdots + x_{n-1}^2}$. For $t,s \geq 0$, we set 
    \[v_t(x)\coloneqq \begin{cases}\label{function_v_t}
        0 & \text{if}\ |x| \leq t \\
        |x|-t & \text{if}\ |x| \geq t,
    \end{cases}\]
    and 
    \[w_s(x)\coloneqq \begin{cases}
     0 & \text{if} \ |x| \leq s \ \text{and} \ x_n\geq 0\\
     0 & \text{if}\  |x|_{n-1} \leq s \ \text{and} \ x_n < 0\\
     |x| - s & \text{if}\  |x| > s \ \text{and} \ x_n\geq 0\\
     |x|_{n-1} - s & \text{if}\  |x|_{n-1} > s \ \text{and} \ x_n < 0,
    \end{cases}\]
    for every $x \in \R^n$. We note that $v_t$ and $w_s$ are elements of $\fconvfz$ for every $t, s> 0$. 
    These two functions are key in establishing Theorem \ref{Hadwiger_decom_volumes} and Theorem \ref{Hadwiger_decom_vector}, respectively. We present some results that are essential to proving Theorem \ref{main_theorem_monge_ampere}.
\begin{lem}[\cite{colesanti_ludwig_mussnig_5}, Lemma 2.15]\label{retrieving_densities_for_volumes}
        Let $1 \leq j \leq n-1$ and $\xi \in D^n_j$. Then, for every $t > 0$,
        \begin{align*}
            \oV_{j,\xi}^{*}(v_t)= \kappa_n \binom{n}{j} \left[t^{n-j}\xi(t) + (n-j)\int_{t}^{+\infty} r^{n-j-1}\xi(r)\d r\right]
        \end{align*}
           
    \end{lem}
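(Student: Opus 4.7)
The plan is to compute the Hessian measure $\Phi^n_j(v_t;\cdot)$ explicitly by separating it into an absolutely continuous part supported on $\{|x|>t\}$ and a singular part concentrated on the sphere $\{|x|=t\}$, and then integrate $\xi(|x|)$ against it. First, I would use that $v_t$ is $C^2$ on $\R^n\setminus\{|x|=t\}$: it vanishes on $\{|x|<t\}$, while on $\{|x|>t\}$ one has $\Hess v_t(x)=\Hess|x|=\tfrac{1}{|x|}(I-|x|^{-2}xx^T)$, whose eigenvalues are $0$ in the radial direction and $1/|x|$ on the tangent space to the sphere, of multiplicity $n-1$. For $1\leq j\leq n-1$ this gives $[\Hess v_t(x)]_j=\binom{n-1}{j}|x|^{-j}$ on $\{|x|>t\}$ and $[\Hess v_t(x)]_j=0$ on $\{|x|<t\}$; by the $C^2$ density formula (\ref{hessian_abs_cont_lebesgue}), this pins down $\Phi^n_j(v_t;\cdot)$ on the complement of the sphere.

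To locate the remaining mass on $\{|x|=t\}$, where $v_t$ fails to be $C^2$, I would return to the defining Steiner-type formula for $\Phi^n_j$ and compute the image set in closed form. A direct inspection of the subdifferential yields $\partial v_t(x)=\{0\}$ on $\{|x|<t\}$, $\partial v_t(x)=\{x/|x|\}$ on $\{|x|>t\}$, and $\partial v_t(x)=\{rx/|x|:r\in[0,1]\}$ on $\{|x|=t\}$. For $R>t$ and $s\geq 0$, the image $\{x+sy:|x|\leq R,\,y\in\partial v_t(x)\}$ then decomposes as $\{|z|<t\}$ (from the interior), the shell $\{t\leq|z|\leq t+s\}$ (from the sphere), and $\{t+s\leq|z|\leq R+s\}$ (from the exterior), whose union is precisely the ball $\{|z|\leq R+s\}$. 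Matching $\kappa_n(R+s)^n=\sum_{j=0}^n\kappa_n\binom{n}{j}R^{n-j}s^j$ with the Steiner expansion yields $\Phi^n_j(v_t;\{|x|\leq R\})=\kappa_n\binom{n}{j}R^{n-j}$; subtracting the absolutely continuous contribution on $\{t<|x|<R\}$, which by polar coordinates together with the identity $n\binom{n-1}{j}=(n-j)\binom{n}{j}$ equals $\kappa_n\binom{n}{j}(R^{n-j}-t^{n-j})$, leaves
\[
\Phi^n_j\!\left(v_t;\{|x|=t\}\right)=\kappa_n\binom{n}{j}\,t^{n-j}.
\]

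To finish, I would plug these pieces into the definition of $\oV^*_{j,\xi}(v_t)$. The absolutely continuous piece contributes $\kappa_n(n-j)\binom{n}{j}\int_t^\infty r^{n-j-1}\xi(r)\d r$, and since $\xi(|x|)=\xi(t)$ is constant on $\{|x|=t\}$, the singular piece contributes $\kappa_n\binom{n}{j}\,t^{n-j}\xi(t)$; summing the two gives the formula in the statement. The main obstacle is the middle step, i.e.\ identifying the singular mass on $\{|x|=t\}$: the $C^2$ density formula does not apply there, and one has to argue at the level of the Steiner definition, via the subdifferential of $v_t$ and the $\hm^n$-measure of the image set. Once that image is recognized as the ball of radius $R+s$, however, matching polynomial coefficients in $s$ makes the rest of the calculation routine.
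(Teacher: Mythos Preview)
Your argument is correct. The paper does not reprove this lemma---it is quoted from \cite{colesanti_ludwig_mussnig_5}---so there is no in-paper proof to compare against line by line. That said, your approach is exactly the one implicitly used throughout the Appendix: compute $\Phi^n_j(v_t;\cdot)$ by splitting off the $C^2$ region (where the density formula \eqref{hessian_abs_cont_lebesgue} applies) from the singular sphere $\{|x|=t\}$, and recover the mass on the sphere from the Steiner-type definition via the subdifferential. In particular, the computation $\Phi^n_j(v_s;\{|x|=s\})=\kappa_n\binom{n}{j}s^{n-j}$ is invoked in the paper (case $B_1$ in the Appendix) with the remark ``by a similar proof to Lemma~\ref{retrieving_densities_for_volumes}'', which is precisely the image-set argument you carried out. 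Your use of the representation \eqref{representation_scalar_fconvfz} is justified since $v_t\in\fconvfz$ for $t>0$, and the identity $n\binom{n-1}{j}=(n-j)\binom{n}{j}$ correctly converts the polar-coordinate integral into the claimed form.
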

 \begin{lem}[\cite{mouamine_mussnig_2}, Proposition 6.2]\label{retrieving_density_vector_valued}
    If $1 \leq j \leq n$ and $\xi \in T^n_j$, then for every $s > 0$,
    \begin{align*}
        \ot_{j,\xi}^{*}(w_s)=\frac{1}{n}\binom{n}{j}\kappa_{n-1}s^{n-j+1}\xi(s)\ e_n.
    \end{align*}
    \end{lem}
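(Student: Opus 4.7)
The plan is to exploit the rotational symmetry of $w_s$ about the $e_n$-axis and to decompose the integral along the hyperplane $\{x_n=0\}$. Since $w_s\in\fconvfz$, the representation (\ref{representation_vector_fconvfz}) gives $\ot^*_{j,\xi}(w_s) = \int_{\R^n}\xi(|x|)\,x\,d\Phi_j^n(w_s;x)$. As $w_s$ is invariant under every rotation $\vartheta$ fixing $e_n$, the rotation equivariance of $\ot^*_{j,\xi}$ forces $\ot^*_{j,\xi}(w_s)$ to be parallel to $e_n$, reducing the task to computing the scalar
\[
c := \int_{\R^n}\xi(|x|)\,x_n\,d\Phi_j^n(w_s;x).
\]

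Next, I would introduce the auxiliary function $\tilde w_s(x) := \max\{|x|_{n-1}-s,\,0\}$ on $\R^n$. Then $w_s = v_s$ on the open half-space $H^+ := \{x_n>0\}$, while $w_s = \tilde w_s$ on $H^- := \{x_n<0\}$. By the locally determined property (\ref{locally_determined}), and since the hyperplane $\{x_n=0\}$ contributes nothing to the $x_n$-weighted integral,
\[
c = \int_{H^+}\xi(|x|)\,x_n\,d\Phi_j^n(v_s;x) + \int_{H^-}\xi(|x|)\,x_n\,d\Phi_j^n(\tilde w_s;x).
\]
Both $v_s$ and $\tilde w_s$ are invariant under $x_n\mapsto -x_n$, so their Hessian measures are symmetric under this reflection. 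Writing each half-space integral as half of the corresponding full-space integral of $\xi(|x|)|x_n|$ (with sign $+$ on $H^+$ and $-$ on $H^-$) then yields
\[
c = \tfrac{1}{2}\int_{\R^n}\xi(|x|)\,|x_n|\,\bigl(d\Phi_j^n(v_s;x) - d\Phi_j^n(\tilde w_s;x)\bigr).
\]

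For $j\le n-1$, Lemma \ref{addition_complementary} gives $d\Phi_j^n(\tilde w_s;(y,x_n)) = d\Phi_j^{(n-1)}(\hat w_s;y)\,dx_n$, where $\hat w_s(y) := \max\{|y|-s,\,0\}$ is the $(n-1)$-dimensional analog of $v_s$; for $j=n$ one verifies directly that $\Phi_n^n(\tilde w_s;\cdot)=0$, since $\tilde w_s$ depends on only $n-1$ variables and its subdifferential always lies in $e_n^\perp$. The Hessian measure $\Phi_j^n(v_s;\cdot)$ splits into a smooth part of density $\binom{n-1}{j}|x|^{-j}$ on $\{|x|>s\}$ (the Hessian of $|x|-s$ has $n-1$ eigenvalues equal to $1/|x|$ and one equal to $0$) together with a singular part uniformly distributed on the sphere $\{|x|=s\}$ of total mass $\kappa_n\binom{n}{j}s^{n-j}$, extracted from Lemma \ref{retrieving_densities_for_volumes}; analogous formulas govern $\Phi_j^{(n-1)}(\hat w_s;\cdot)$ in dimension $n-1$.

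The central calculation evaluates the four resulting integrals using spherical and cylindrical coordinates, the substitution $r=\sqrt{\rho^2+u^2}$ (reducing $\int_0^\infty u\,\xi(\sqrt{\rho^2+u^2})\,du$ to $\int_\rho^\infty \xi(r)r\,dr$), and the identity $\int_{\sn}|\omega_n|\,d\hm^{n-1}(\omega) = 2\kappa_{n-1}$. Both the smooth contribution from $\Phi_j^n(v_s;\cdot)$ and the total contribution from $\Phi_j^n(\tilde w_s;\cdot)$ evaluate to $2\kappa_{n-1}\binom{n-1}{j}\int_s^\infty\xi(r)r^{n-j}\,dr$ --- this cancellation hinges on the identity $(n-1)\binom{n-2}{j}/(n-1-j) = \binom{n-1}{j}$ --- so that only the singular part of $\Phi_j^n(v_s;\cdot)$ survives. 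Its evaluation gives $\tfrac{2}{n}\binom{n}{j}\kappa_{n-1}s^{n-j+1}\xi(s)$, from which $c = \tfrac{1}{n}\binom{n}{j}\kappa_{n-1}s^{n-j+1}\xi(s)$ follows. The principal obstacle is the bookkeeping of this cancellation; minor additional care is needed in the edge cases $j=n-1$ (where $\binom{n-2}{n-1}=0$ automatically annihilates the smooth part of $\Phi_{n-1}^{(n-1)}(\hat w_s;\cdot)$) and $j=n$ (where the $H^-$ contribution drops out entirely as noted above).
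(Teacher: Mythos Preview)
Your argument is correct. The reduction to the $e_n$-component by $\SO(n-1)$-equivariance, the split across $\{x_n=0\}$ via the locally determined property, and the reflection trick that turns the problem into the difference $\tfrac12\int \xi(|x|)\,|x_n|\,(d\Phi^n_j(v_s)-d\Phi^n_j(\tilde w_s))$ all go through as stated; the cancellation of the smooth $v_s$-part against the full $\tilde w_s$-part is exactly the binomial identity you isolate, and the remaining singular contribution on $\{|x|=s\}$ gives the claimed value. The edge cases $j=n-1$ and $j=n$ are handled correctly.

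As for comparison: the present paper does not prove this lemma but cites it from \cite{mouamine_mussnig_2}. The closest thing here is the computation in the Appendix (Proposition~\ref{retrieving_densities_composition}), which evaluates $\int_{\R^n}\xi(|x|)x_n\,d\Phi^n_j(\mu w_s+\lambda v_t;x)$ and recovers your statement as the $\mu^j$-coefficient. That computation proceeds quite differently: it partitions $\R^n$ into many explicit smooth regions and singular strata ($A_1,\dots,A_5$, $B_1,\dots,B_7$, etc.), evaluates the Hessian measure piece by piece (using parallel-set calculations and disintegration for the singular parts), and then sums. Your approach is considerably more economical for the bare statement: the reflection symmetry of $v_s$ and $\tilde w_s$ lets you avoid ever touching the non-symmetric singular sets of $w_s$ itself, and the cancellation emerges from a single identity rather than from summing a dozen terms. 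The trade-off is that the appendix method, while heavier, produces the full bivariate information in $(\mu,\lambda,s,t)$ needed for the main theorem, which your symmetry shortcut cannot supply.
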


\subsection*{Integral transform}
 For $\xi \in C_b((0,\infty))$ and $s > 0 $, we define   
    \begin{align*}
        \cR\xi (s) \coloneqq s \xi(s) + \int_{s}^{\infty} \xi(r)\d r
    \end{align*}
    and we have $\cR\xi \in C_b((0,\infty))$. For $m \in \N$, we have (Cf. \cite[Lemma 3.6]{colesanti_ludwig_mussnig_6})
        \begin{align}\label{R_m}
            \cR^m \xi (s) = s^m\xi(s) + m \int_{s}^{\infty}r^{m-1}\xi(r)\d r,
        \end{align}
        for every $s>0$, where 
        \begin{align*}
            \cR^m \xi :=  \underbrace{\cR \circ \cdots \circ \cR}_m \xi.
        \end{align*}
    By integration by parts (Cf. \cite[Lemma 3.8]{colesanti_ludwig_mussnig_6}) and its proofs), the inverse operator to $\cR^l$ is given by 
    \[\cR^{-l}\rho (s) = (\cR^{-1})^l\rho(s) = \frac{\rho(s)}{s^l} - l \int_{s}^{\infty} \frac{\rho(t)}{t^{l+1}}\d t \]
    for $s>0$, where $\rho \in C_b((0,\infty))$.
    
     The following lemma on the integral transform $\cR$ will be useful in the proof of Theorem \ref{main_theorem_monge_ampere}.
    \begin{lem}[\cite{mouamine_mussnig_2}, Lemma 4.2]\label{R_bijection_T_n}
         For $0 \leq k \leq n$ and $0 \leq l \leq n-k$, the map $\cR^l : T^n_k \rightarrow T^{n-l}_k$ is a bijection.
    \end{lem}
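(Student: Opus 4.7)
The plan is to reduce to the base case $l=1$ and then establish bijectivity of $\cR\colon T^n_k \to T^{n-1}_k$ for $0 \le k \le n-1$ by providing an explicit inverse. Since $\cR^l$ is the $l$-fold composition of $\cR$ and compositions of bijections are bijections, iterating the base case along the chain $T^n_k \to T^{n-1}_k \to \cdots \to T^{n-l}_k$ handles $l \ge 2$ precisely under the hypothesis $k \le n-l$. The boundary case $k = n$ forces $l = 0$, where $\cR^0$ is the identity and there is nothing to prove.

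For well-definedness and injectivity I would argue as follows. That $\cR\xi \in C_b((0,\infty))$ is already noted in the text, and bounded support is preserved since $\int_s^\infty \xi\,\d r$ vanishes once $s > \sup \supp \xi$. The limit condition
\[
s^{n-k}\cR\xi(s) = s^{n-k+1}\xi(s) + s^{n-k}\int_s^\infty \xi(r)\,\d r \longrightarrow 0 \quad (s \to 0^+)
\]
follows from $\xi \in T^n_k$ on the first summand and from an $\varepsilon$--$\delta$ estimate $|\xi(r)| \le \varepsilon\, r^{-(n-k+1)}$ near $0$ combined with $n-k \ge 1$ on the second. For injectivity, if $\cR\xi \equiv 0$ then $s\xi(s) = -\int_s^\infty \xi(r)\,\d r$, whose right-hand side is $C^1$ on $(0,\infty)$; hence $\xi\in C^1((0,\infty))$, and differentiating the identity yields $\xi' \equiv 0$. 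Being constant on the connected set $(0,\infty)$ with bounded support, $\xi \equiv 0$.

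Surjectivity is the main step. Given $\eta \in T^{n-1}_k$, I would propose the candidate preimage
\[
\xi(s) := \frac{\eta(s)}{s} - \int_s^\infty \frac{\eta(t)}{t^2}\,\d t, \qquad s > 0,
\]
which is well defined because $\eta$ has bounded support. A Fubini computation over the triangular region $\{(r,t) : s \le r \le t\}$ yields $\int_s^\infty \xi(r)\,\d r = s \int_s^\infty \eta(t)/t^2\,\d t$, so that $\cR\xi(s) = s\xi(s) + \int_s^\infty \xi(r)\,\d r = \eta(s)$. To see $\xi \in T^n_k$, continuity is clear, $\xi$ vanishes beyond $\sup \supp \eta$, and
\[
s^{n-k+1}\xi(s) = s^{n-k}\eta(s) - s^{n-k+1}\int_s^\infty \frac{\eta(t)}{t^2}\,\d t \longrightarrow 0 \quad (s \to 0^+)
\]
follows from $\eta \in T^{n-1}_k$ on the first summand, and from the analogous $\varepsilon$--$\delta$ estimate using $|\eta(t)| \le \varepsilon\, t^{-(n-k)}$ near $0$ on the second.

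The main obstacle is the boundary analysis at the origin: the candidate inverse must decay at exactly the $T^n_k$ rate, not a slower one, which requires carefully matching integrability exponents of the form $\int_s^\delta r^{-(n-k+2)}\,\d r$ and $\int_s^\delta r^{-(n-k+1)}\,\d r$ against the prescribed asymptotics. It is precisely the arithmetic inequality $n - k \ge 1$, built into our hypothesis, that keeps these estimates compatible with the target spaces. Once this exponent bookkeeping is carried out, Fubini, preservation of bounded support, and the composition argument all become routine.
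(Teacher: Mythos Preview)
The paper does not actually prove this lemma: it is quoted verbatim from \cite{mouamine_mussnig_2} (Lemma~4.2) and used as a black box, so there is no in-paper argument to compare your proposal against. Your reduction to the case $l=1$ together with the explicit inverse
\[
\xi(s)=\frac{\eta(s)}{s}-\int_s^\infty \frac{\eta(t)}{t^2}\,\d t
\]
is correct: the Fubini computation gives $\cR\xi=\eta$, the differentiation argument for injectivity is clean, and the $\varepsilon$--$\delta$ estimates at the origin correctly exploit $n-k\ge 1$ to place $\cR\xi$ in $T^{n-1}_k$ and $\xi$ back in $T^n_k$. This is presumably close in spirit to what the cited reference does, since the inverse you wrote down is essentially forced by solving the first-order ODE underlying $\cR$.
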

    Suppose that $\gamma : (0,\infty) \times (0,\infty) \rightarrow \R$, then we denote by the transform $\cR_i$ the integral transform $\cR$ on the $i$-th entry, where $i \in \{1,2\}$, that is, 
    \begin{align*}
        \left(\cR_1\gamma\right)(s,t) = s \gamma(s,t) + \int_{s}^{\infty} \gamma(r,t)\d r
    \end{align*}
    for every $t> 0$. Similarly, we define $\cR_2$.
    
\subsection*{Mixed Monge-Ampère measures} We define the \emph{Monge--Amp\`ere measure} of $v \in \fconvf$, a Radon measure on $\R^n$, as
    \begin{align*}
        \MA(v;A) = V_n\left(\underset{x \in A}{\cup} \partial v(x)\right)
    \end{align*}
    for every Borel set $A \subset \R^n$. The \emph{mixed Monge--Amp\`ere  measure} which is associated to an $n$-tuple of functions in $\fconvf$, is defined as 
    \begin{align}\label{polarization_monge_ampere}
        \MA(\lambda_1 v_1 + \dots + \lambda_m v_m; \cdot ) = \sum_{i_1,\dots,i_n = 1}^{m}\lambda_{i_1}\cdots\lambda_{i_n}\MA(v_{i_1},\dots,v_{i_n}; \cdot)
    \end{align}
    where $m \in \N$, $v_1, \dots, v_m \in \fconvf$ and $\lambda_1, \dots, \lambda_m \geq 0$. Equation (\ref{polarization_monge_ampere}) uniquely determines the mixed Monge–Amp\`ere measure if we additionally assume that it is symmetric in its entries; see \cite{trudinger_wang_hessian_ii}. 

        The following proposition gives a description of the measures $\MA_0$ defined in (\ref{monge_ampere_j}).
    \begin{prop}[\cite{colesanti_ludwig_mussnig_7}, Theorem 4.5]\label{MA_0}
        Let $v \in \fconvf$. We have
        \begin{align*}
            \MA_0(v; B) = \kappa_n \delta_0(B)
        \end{align*}
        for every Borel set $B \subset \R^n$, where $\delta_0$ is the Dirac measure at the origin.
    \end{prop}
     We point out that the functionals defined in Theorem \ref{Hadwiger_decom_volumes} and Theorem \ref{Hadwiger_decom_vector} can be expressed in terms of the $\MA_j$ measures as follows.

\begin{theo}[\cite{colesanti_ludwig_mussnig_7}, Theorem 1.6]\label{functional_volume_decomposition}
    If $0 \leq i \leq n$ and $\xi_i \in D^n_i$, then 
    \begin{align}\label{decomposition_volumes_v_monge_ampere}
        \oV_{i,\xi_i}^*(v) =\int_{\R^n} \alpha_i(|x|) \d\MA_i(v;x)
    \end{align}
    for every $v \in \fconvf$, where $\alpha_i \in D^n_n$ and is given by
     \begin{align*}
        \alpha_i(p) = \binom{n}{i}\cR^{n-i}\xi_i(p),
    \end{align*}
    for every $p >0$, where the integral transform $\cR$ is defined in (\ref{R_m}).
\end{theo}
\begin{theo}[\cite{mouamine_mussnig_2}, Theorem B]\label{functional_vector_decomposition}
        Let $1 \leq k \leq n$ and $\xi_k \in T^n_k$, then
        \begin{align}\label{decomposition_vectors_monge_ampere}
            \ot_{k,\xi_k}^*(v) =  \int_{\R^n} \alpha_k(|x|)x \d\MA_k(v;x)
        \end{align}
        for every $v \in \fconvf$, where $\alpha_k \in T^n_n$ and is given by
            \begin{align*}
                \alpha_k(p) = \binom{n}{k}\cR^{n-k}\xi_k(p)
             \end{align*}
    for every $p >0$.
       
\end{theo}
    
    The relevant properties of the operators appearing in Theorem \ref{functional_volume_decomposition} and Theorem \ref{functional_vector_decomposition} are consequences of the following statements, which can be found in \cite[Proposition 5.3]{colesanti_ludwig_mussnig_7}.
    \begin{lem}\label{CVA}
    Let $\psi \in C_c(\R^n)$ and $0 \leq j \leq n$. If $v_1,\dots,v_{n-j} \in \fconvf$, then 
    \begin{align*}
        v \rightarrow \int_{\R^n} \psi(x) \d\MA(v[j], v_1,\dots,v_{n-j};x)
    \end{align*}    
    defines a continuous, dually epi-translation invariant valuation on $\fconvf$ that is homogeneous of degree $j$. 
\end{lem}
   
\section{Proof of Theorem \ref{main_theorem_monge_ampere}}\label{proof_main_theorem}
 Let $1\leq j\leq n$, and $\xi\in T_j^n$. Define 
    \begin{align}\label{the_expression}
        \oz_{j,\xi}(u,v)\coloneqq\int_{\SOn} \ot_{j,\xi}^*(u+(v\circ \vartheta^{-1})) \d \vartheta
    \end{align}
    for every $u, v \in \fconvf$ where the functional $\ot_{j,\xi}^*$ is defined in (\ref{decomposition_vectors_monge_ampere}). Since $ \ot_{j,\xi}^*$ is homogeneous of degree $j$, the map $ \oz_{j,\xi}$ is jointly homogeneous of degree $j$, that is,
    \[\oz_{j,\xi}(\tau u,\tau v) = \tau^j\oz_{j,\xi}( u, v),\]
    for all $\tau > 0$.
   
   Since $\xi \in T^n_j$, the functional $\ot_{j,\xi}^*$ is continuous by Theorem \ref{Hadwiger_decom_vector}, and since $v \mapsto v \circ \vartheta^{-1}$ is jointly continuous by Lemma \ref{JCM}, the map 
    \[\vartheta \mapsto \ot_{j,\xi}^*(u+(v\circ \vartheta^{-1}))\]
    is continuous on the compact group $\SOn$. Hence the integral in (\ref{the_expression}) is well defined.
    \begin{lem}\label{kin_form}
    Let $1\leq j\leq n$ and $\xi\in T_j^n$.
   
    \begin{enumerate}
        \item For a fixed $\Bar{v} \in \fconvf$, the map $\oz_{j,\xi}(\cdot,\bar{v}): \fconvf \rightarrow \R^n$ is a continuous, dually epi-translation invariant and rotation equivariant valuation.
        \item For a fixed $\Bar{u} \in \fconvf$, the map $\oz_{j,\xi}(\bar{u},\cdot): \fconvf \rightarrow \R^n$ is a continuous, dually epi-translation invariant and rotation invariant valuation.
    \end{enumerate}
\end{lem}
\begin{proof} 

Let $1 \leq j \leq n$, and $\xi \in T^n_j$. For a fixed $ \bar{v} \in \fconvf$, let
   \begin{align}\label{kinematic-form1}
         \oz_{j,\xi}(u,\bar{v}) =\int_{\SOn} \ot_{j,\xi}^*(u+(\bar{v}\circ \vartheta^{-1})) \d \vartheta
    \end{align}
    for every $u \in \fconvf$.
    \begin{enumerate}
        \item We begin by showing that the map 
        \begin{equation}\label{joint_cont_map1}
            (\vartheta,u) \rightarrow \ot_{j,\xi}^*(u+(\bar{v}\circ \vartheta^{-1}))
        \end{equation}
        is jointly continuous on $\SOn \times \fconvf$. Since the functional $\ot_{j,\xi}^*$ is rotation equivariant, 
        \begin{align*}
            \ot_{j,\xi}^*(u+(\bar{v}\circ \vartheta^{-1}))&= \ot_{j,\xi}^*(((u\circ \vartheta)+\bar{v})\circ \vartheta^{-1})\\
            &= \vartheta \ot_{j,\xi}^*((u\circ \vartheta)+\bar{v})
        \end{align*}
        for every $u \in \fconvf$ and $\vartheta \in \SOn$. Thus, by the continuity of addition under epi-convergence in $\fconvf$, Lemma \ref{JCM}, and the continuity of $ \ot_{j,\xi}^*$ , the map (\ref{joint_cont_map1}) is jointly continuous on  $\SOn \times \fconvf$.

        Now, let $u_i \in \fconvf$ with $i \in \N$ be such that $u_i$ epi-converges to some $\Bar{u} \in \fconvf$, meaning that $\{u_i \ : i \in \N\} \cup \{\Bar{u}\}$ is sequentially compact. And since $\SOn$ is compact and (\ref{joint_cont_map1}) is jointly continuous, it follows that the supremum 
        \begin{align*}
            \sup\Bigl\{\left| \ot_{j,\xi}^*(u_i+(\bar{v}\circ \vartheta^{-1}))\right|: i \in \N, \vartheta \in \SOn\Bigl\}
        \end{align*}
        is finite. Thus, by applying the dominated convergence theorem, we obtain
        \begin{align*}
            \lim_{i \rightarrow \infty} \int_{\SOn} \ot_{j,\xi}^*(u_i+(\bar{v}\circ \vartheta^{-1}))\d\vartheta= \int_{\SOn}  \ot_{j,\xi}^*(\bar{u}+(\bar{v}\circ \vartheta^{-1})) \d\vartheta
        \end{align*}
        which proves the continuity of
        \[u \mapsto  \oz_{j,\xi}(u,\bar{v}) .\]
        Similarly, for a fixed $\bar{u}\in \fconvf$, the map 
         \begin{align}\label{joint_cont_map2}
            (\vartheta,v) \rightarrow\ot_{j,\xi}^*(\Bar{u} + (v\circ \vartheta^{-1}) )
        \end{align}
        is jointly continuous on $\SOn \times \fconvf$. The same compactness and dominated convergence argument shows that 
      \begin{align*}
         v \mapsto \oz_{j,\xi}(\bar{u},v)
     \end{align*}
    is continuous.
        \item We now prove the dually epi-translation invariance. Let $y \in \R^n$, and let $l_y(x) = \langle x,y\rangle $ for every $x \in \R^n$. For a fixed $\bar{v} \in \fconvf$, we have
        \begin{align*}
            (u + l_y) + (\bar{v}\circ \vartheta^{-1}) = (u + (\bar{v}\circ \vartheta^{-1}))+l_y.
        \end{align*}
       Since the functional $\ot_{j,\xi}^*$ is dually epi-translation invariant, it follows that
       \[\oz_{j,\xi}(u + l_y,\bar{v}) = \oz_{j,\xi}(u,\bar{v}).\]
       For a fixed $\bar{u}\in\fconvf$, we use
       \begin{align*}
           (v + l_y) \circ \vartheta^{-1} = v\circ\vartheta^{-1} + l_{\vartheta y }.
       \end{align*}
       Hence,
         \begin{align*}
            \Bar{u} + ((v + l_y) \circ \vartheta^{-1} )= \Bar{u}+  (v \circ \vartheta^{-1}) +l_{\vartheta y }.
        \end{align*}
        Again, since the functional $\ot_{j,\xi}^*$ is dually epi-translation invariant, we obtain
       \[\oz_{j,\xi}(\bar{u},v + l_y) = \oz_{j,\xi}(\bar{u},v).\] 
        \item We then prove the valuation property. Let $u_1, u_2 \in \fconvf$, such that $u_1 \vee u_2$ and $u_1 \wedge u_2$ belong to $\fconvf$. Since
        \[(u_1 \wedge u_2) + (\bar{v}\circ \vartheta^{-1}) = (u_1 + (\bar{v}\circ \vartheta^{-1})) \wedge (u_2 + (\bar{v}\circ \vartheta^{-1})) \]
         and similarly for the maximum, the valuation property of $\ot_{j,\xi}^*$ implies that, 
         \[u \mapsto  \oz_{j,\xi}(u,\bar{v})\]
         is a valuation.

         The proof in the second variable is the same. Indeed, if $v_1, v_2 \in \fconvf$, such that $v_1 \vee v_2$ and $v_1 \wedge v_2$ belong to $\fconvf$, then 
         \begin{align}
             \bar{u}+ ((v_1 \wedge v_2)\circ \vartheta^{-1}) = (\bar{u}+ (v_1\circ \vartheta^{-1})) \wedge (\bar{u}+ (v_2\circ \vartheta^{-1})),
         \end{align}
         and similarly for the maximum. Hence, 
         \[v \mapsto \oz_{j,\xi}(\bar{u},v)\]
         is a valuation.
        \item It remains to show the rotation properties, where the two variables behave differently.
         \begin{enumerate}
             \item First, let $\bar{v}\in \fconvf$ be fixed and let $\phi \in \SOn$. Then,
        \begin{align*}
             \oz_{j,\xi}(u\circ \phi^{-1},\bar{v})&= \int_{\SOn} \ot_{j,\xi}^*((u \circ \phi^{-1})+(\bar{v}\circ \vartheta^{-1})) \d\vartheta 
            \\&=\int_{\SOn}\ot_{j,\xi}^*((u+ (\bar{v}\circ \vartheta^{-1}\circ\phi))\circ \phi^{-1}) \d\vartheta\\
            &= \phi \int_{\SOn} \ot_{j,\xi}^*(u + (\bar{v}\circ \psi^{-1})) \d\psi\\
            &= \phi \oz_{j,\xi}(u,\bar{v})
        \end{align*}
        for all $u \in \fconvf$, where we have used the rotation equivariance of the functional $\ot_{j,\xi}^*$, the change of variables, and the invariance of the Haar measure on $\SOn$. This proves the rotation equivariance of 
        \[u \mapsto \oz_{j,\xi}(u,\bar{v}).\]
        \item Second, let $\bar{u}\in \fconvf$ be fixed. Since
        \begin{align}\label{equivalent_kinematic_form2}
            \int_{\SOn}\ot_{j,\xi}^*(\Bar{u}+ v \circ \vartheta^{-1} )\d \vartheta= \int_{\SOn}\vartheta \ot_{j,\xi}^*((\Bar{u}\circ\vartheta)+ v)\d\vartheta,
        \end{align}
        then for every $\phi \in \SOn$, we have 
        \begin{align*}
             \oz_{j,\xi}(\bar{u},v\circ \phi^{-1})&=\int_{\SOn}\vartheta \ot_{j,\xi}^*((\Bar{u}\circ\vartheta)+ (v \circ \phi^{-1}))\d\vartheta \\
            &=\int_{\SOn} \vartheta \ot_{j,\xi}^*(((\Bar{u}\circ\vartheta\circ\phi)+v)\circ \phi^{-1} ) \d\vartheta \\
            &=\int_{\SOn} \vartheta\phi \ot_{j,\xi}^*((\Bar{u}\circ\vartheta\circ\phi)+ v) \d\vartheta \\
            &= \int_{\SOn}\psi \ot_{j,\xi}^*((\Bar{u}\circ\psi)+ v)\d\psi\\
            &= \oz_{j,\xi}(\bar{u},v)
        \end{align*}
        for all $v \in \fconvf$. We have used the rotation equivariance of the functional $\ot_{j,\xi}^*$ in the third equality, the change of variables, and the invariance of the Haar measure on $\SOn$ in the fourth equality.
        Hence, the functional
        \[v \mapsto \oz_{j,\xi}(\bar{u},v)\]
        is rotation invariant.
         \end{enumerate}
    \end{enumerate}
    Combining the preceding properties proves both assertions.
    \end{proof}
To prove Theorem \ref{main_theorem_monge_ampere}, we will use some lengthy calculations (Proposition \ref{retrieving_densities_composition}), which can be found in the Appendix \ref{Appendix}. 

\begin{proof}[Proof of Theorem \ref{main_theorem_monge_ampere}]
    If $j = 0$, the statement follows immediately. Let $1\leq j\leq n$, and let $\alpha\in T_n^n$. By Lemma \ref{R_bijection_T_n}, there exists $\xi \in T^n_j$ such that
    \begin{align*}
        \alpha(r) = \binom{n}{j}\cR^{n-j}\xi(r),
    \end{align*}
    for every $r> 0$.
    For $u, v \in \fconvf$, by Theorem \ref{functional_vector_decomposition}, we have
    \begin{equation}
    \begin{aligned}
        \oz_{j,\xi}(u,v) &=  \int_{\SOn} \ot_{j,\xi}^*(u+ (v\circ \vartheta^{-1})) \d\vartheta\\
        &= \int_{\SOn}\int_{\R^n}\alpha(|x|)x \d\MA_j(u+ (v\circ \vartheta^{-1});x)\d\vartheta. 
    \end{aligned}
    \end{equation}
     For a fixed $\Bar{v} \in \fconvf$, and according to Lemma \ref{kin_form}, the map $u \mapsto \oz_{j,\xi}(u,\Bar{v})$ is a continuous, dually epi-translation invariant, and rotation equivariant valuation on $\fconvf$. By Theorem \ref{Hadwiger_decom_vector}, and Theorem \ref{functional_vector_decomposition}, there exists $\xi^j_{k,\Bar{v}} \in T^n_k$ where $1 \leq k \leq n$, such that 
    \begin{align}\label{valuation_monge_ampere}
        \oz_{j,\xi}(u,\Bar{v}) = \sum_{k=1}^n \int_{\R^n} \alpha^j_{k,\Bar{v}}(|x|) x \d\MA_k(u;x)
    \end{align}
    for every $u \in \fconvf$, where 
    \begin{align*}
        \alpha^j_{k,\Bar{v}}(p) = \binom{n}{k}\cR^{n-k}\xi^j_{k,\Bar{v}}(p),
    \end{align*}
    for every $p >0$. We recall that, for every $x \in \R^n$,
    \[w_s(x)= \begin{cases}\label{function_w_s}
     0 & \text{if} \ |x| \leq s \ \text{and} \ x_n\geq 0\\
     0 & \text{if}\  |x|_{n-1} \leq s \ \text{and} \ x_n < 0\\
     |x| - s & \text{if}\  |x| > s \ \text{and} \ x_n\geq 0\\
     |x|_{n-1} - s & \text{if}\  |x|_{n-1} > s \ \text{and} \ x_n < 0
    \end{cases}.\]
    Since $w_s\in \fconvfz$, we may evaluate (\ref{valuation_monge_ampere}) at $ u = \mu w_s$. Using the $k$-homogeneity of the measure $\MA_k$, that is 
    \[\MA_k(\mu w_s; \cdot) = \mu^k \MA_k(w_s; \cdot),\]
    and then applying Lemma \ref{retrieving_density_vector_valued} to each summand with $\xi = \xi^j_{k,\Bar{v}}$, we obtain
    \begin{align*}
        \oz_{j,\xi}(\mu w_s, \Bar{v}) = \frac{1}{n}\kappa_{n-1}\sum_{k=1}^{n}\mu^k\binom{n}{k}s^{n-k+1}\xi^j_{k,\Bar{v}}(s)\ e_n
    \end{align*}
    for every $s > 0$ and every $\mu > 0$.
    
    Let us now fix $\Bar{s}> 0$. By Lemma \ref{kin_form} and for each $1 \leq k \leq n$, the map
    \begin{align*}
        v \mapsto \xi^j_{k,v}(\Bar{s})
    \end{align*}
    is a continuous, dually epi-translation invariant, and rotation invariant valuation. By Theorem \ref{Hadwiger_decom_volumes} and Theorem \ref{functional_volume_decomposition}, for each fixed $1 \leq k \leq n$, there exist functions $\xi^j_{k,i}\in D^n_i$, for $0 \leq i \leq n $, such that 
    \begin{align*}
        \xi^j_{k,v}(\Bar{s}) = \sum_{i=0}^n\int_{\R^n}\alpha^j_{k,i}(\Bar{s},|y|)\d\MA_i(v;y)
    \end{align*}
    for every $v \in \fconvf$, where
     \begin{align*}
        \alpha^j_{k,i}(\bar{s},q) = \binom{n}{i}\cR^{n-i}_2\xi^j_{k,i}(\bar{s},q), \quad q> 0.
    \end{align*}
   Since $\bar{s} > 0$ was arbitrary, this defines $\alpha^j_{k,i}$ on $(0,\infty)\times (0,\infty)$. Substituting the preceding expression into (\ref{valuation_monge_ampere}) gives,
    \begin{align*}
        \oz_{j,\xi}(u,v) = \sum_{i=0}^n\sum_{k=1}^n \int_{\R^n}\int_{\R^n}\alpha^j_{k,i}(|x|,|y|)x\d\MA_i(v;y)\d\MA_k(u;x)
    \end{align*}
    for every $u, v \in \fconvf$. Since $\ot_{j,\xi}^*$ is homogeneous of degree $j$, the map $ \oz_{j,\xi}$ is jointly homogeneous of degree $j$ in the argument $ u + (v\circ \vartheta^{-1}) $. This implies that the only terms satisfying $k+i = j$ can occur. Therefore,
    \begin{align}\label{double_sum_exp}
         \oz_{j,\xi}(u,v) = \sum_{k=1}^{j} \int_{\R^n}\int_{\R^n}\alpha^j_{k,j-k}(|x|,|y|)x\d\MA_{j-k}(v;y)\d\MA_k(u;x)
    \end{align}
    for every $u, v \in \fconvf$, where
    \begin{align}\label{alpha_j_k}
        \alpha^j_{k,j-k}(p,q) = \binom{n}{k}\binom{n}{j-k}\cR^{n-k}_1\left(\cR^{n-j+k}_2\left(\xi^j_{k,j-k}\right)\right)(p,q)
    \end{align}
    for every $p,q >0$.
    For simplicity, we will denote $\alpha^j_{k,j-k}$ by $\alpha_{k,j-k}$.
    
   It remains to determine the functions $\alpha_{k,j-k}$ in terms of $\alpha$. We evaluate (\ref{double_sum_exp}) at $u = \mu w_s$ and $v=\lambda v_t$, where $s,t > 0$ and $ \mu, \lambda > 0$. For $1 \leq k \leq j-1$, the $k$-th term in (\ref{double_sum_exp}) is
    \[\int_{\R^n}\int_{\R^n}\alpha_{k,j-k}(|x|,|y|)x\,\d\MA_{j-k}(\lambda v_t;y)\d \MA_k(\mu w_s;x).
    \]
   By the homogeneity of the mixed Monge--Amp\`ere measures, this is equal to
    \[
       \mu^k\lambda^{j-k}\int_{\mathbb R^n}\int_{\mathbb R^n}\alpha_{k,j-k}(|x|,|y|)x\, \d\MA_{j-k}(v_t;y)\d\MA_k(w_s;x).
    \]
    For every fixed $x \in \R^n$, applying Lemma \ref{retrieving_densities_for_volumes} together with the representation (\ref{decomposition_volumes_v_monge_ampere}) to the function
    \[ q\mapsto \alpha_{k,j-k}(|x|,q),\]
   this gives,    
   \[\int_{\R^n}\alpha_{k,j-k}(|x|,|y|)\d\MA_{j-k}(v_t;y)=\kappa_n\,\alpha_{k,j-k}(|x|,t).
    \]
    Hence the $k$-th mixed term becomes
    \[\mu^k\lambda^{j-k}\kappa_n\int_{\R^n}\alpha_{k,j-k}(|x|,t)x\,\d\MA_k(w_s;x).
    \]
    Applying Lemma \ref{retrieving_density_vector_valued} together with the representation (\ref{decomposition_vectors_monge_ampere}) to the function
    \[p\mapsto \alpha_{k,j-k}(p,t),\]
    yields
    \[\int_{\R^n}\alpha_{k,j-k}(|x|,t)x\,\d\MA_k(w_s;x)=\frac1n\kappa_{n-1}s^{n-k+1}\cR_1^{-(n-k)}\alpha_{k,j-k}(s,t)e_n.
    \]
    Thus the contribution of the $k$-th mixed term is
    \[\frac1n\kappa_{n-1}\kappa_n\mu^k\lambda^{j-k}s^{n-k+1}\cR_1^{-(n-k)}\alpha_{k,j-k}(s,t)e_n.
    \]
    It remains to treat the endpoint term $k=j$, that is, the term in (\ref{double_sum_exp}) involving $\MA_0(\lambda v_t;\cdot)$. By Proposition \ref{MA_0},
    \[\MA_0(\lambda v_t;\cdot)=\kappa_n\delta_0.\]
     Therefore this contribution is given by
    \[\mu^j\kappa_n\int_{\R^n}\alpha_{j,0}(|x|,0)x\,\d\MA_j(w_s;x).\]
   Applying Lemma \ref{retrieving_density_vector_valued} once more together with the representation (\ref{decomposition_vectors_monge_ampere}) gives
    \[\mu^j\kappa_n\int_{\R^n}\alpha_{j,0}(|x|,0)x\,\d\MA_j(w_s;x)=\frac1n\kappa_{n-1}\kappa_n\mu^j s^{n-j+1}\cR_1^{-(n-j)}\alpha_{j,0}(s,0)e_n.\]
    Combining the endpoint term with the mixed terms $1 \leq k \leq j-1$, we obtain
    \begin{align}\label{for_j_between_1_and_n}
        \oz_{j,\xi}( \mu  w_s, \lambda v_t)&=\frac{1}{n}\kappa_{n-1}\kappa_n \left[\mu^j s^{n-j+1}\cR^{-n+j}_1\alpha_{j,0}(s,0)+  \sum_{k=1}^{j-1}\mu^{k}\lambda^{j-k}s^{n-k+1} \cR^{-n+k}_1\alpha_{k,j-k}(s,t)\right] e_n,
    \end{align}
    for every $1 \leq j \leq n$. On the other hand, by Proposition \ref{retrieving_densities_composition}, we have 
    \begin{align}\label{for_j_between_1_and_n_other}
        \oz_{j,\xi}( \mu  w_s, \lambda v_t) =\frac{1}{n}\kappa_{n-1}\left[\mu^j
        s^{n-j+1}\cR^{-n+j}\alpha(s) +\sum_{k=1}^{j-1}\binom{j}{k} \lambda^{j-k}\mu^k s^{n-k+1} \cR^{-n+k}\alpha\left(\max\{s,t\}\right)\right] \ e_n
    \end{align}
    for every $1 \leq j \leq n$ and every $s,t > 0$. Since $\lambda, \mu > 0$ are arbitrary, we compare the coefficients of the monomials in $\lambda$ and $\mu$. The coefficient of the endpoint term $\mu^j$, that is, the term in (\ref{double_sum_exp}) corresponding to $k = j$ and $\MA_0(v)$, gives
    \begin{align*}
        \kappa_n\cR^{-n+j}_1\alpha_{j,0}(s,0) =   \cR^{-n+j}\alpha(s).
    \end{align*}
    For $1 \leq k \leq j-1$, comparison of the coefficient of $\mu^k \lambda^{j-k}$ gives
    \begin{align*}
        \kappa_n \cR^{-n+k}_1\left(\alpha_{k,j-k}\right)(s,t) = \binom{j}{k}\left(\cR^{-n+k}\alpha\right)\left(\max\{s,t\}\right)
    \end{align*}
    Applying $\cR_1^{\,n-j} $to the endpoint identity gives
    \[\alpha_{j,0}(s,0)=\frac1{\kappa_n}\alpha(s).\]

    For $1\leq k\leq j-1$, applying $\cR_1^{\,n-k}$ in the first variable gives
    \[\alpha_{k,j-k}(s,t)=\frac1{\kappa_n}\binom jk\left[\cR_1^{\,n-k}\left((\cR^{-n+k}\alpha)(\max\{\cdot,\cdot\})\right)\right](s,t).
    \]
   We now use the following identity: for every $\beta \in C_b((0,\infty))$ and every $m\in\mathbb N_0$,
    
    \[\mathcal R_1^m\big(\beta(\max\{\cdot,\cdot\})\big)(s,t)=(\mathcal R^m\beta)(\max\{s,t\}).
    \]
   Indeed, for \(m=1\), if \(s\geq t\), then
    \[\cR_1\big(\beta(\max\{\cdot,\cdot\})\big)(s,t)=s\beta(s)+\int_s^\infty \beta(r)\d r=(\cR\beta)(s).\]
   If \(s<t\), then
    \[\begin{aligned}
    \cR_1\big(\beta(\max\{\cdot,\cdot\})\big)(s,t)&=s\beta(t)+\int_s^t\beta(t)\d r+\int_t^\infty\beta(r)\d r \\
    &=t\beta(t)+\int_t^\infty\beta(r)\d r \\
    &=(\cR\beta)(t).
    \end{aligned}
    \]
   Thus the identity holds for \(m=1\), and the general case follows by iteration.
    Applying this identity with
    \[m=n-k,\qquad \beta=\cR^{-n+k}\alpha,\]
   we obtain
    \[\left[\cR_1^{\,n-k}\left((\cR^{-n+k}\alpha)(\max\{\cdot,\cdot\})\right)\right](s,t)=\alpha(\max\{s,t\}).
    \]
   Consequently,
    \[\alpha_{k,j-k}(s,t)=\frac1{\kappa_n}\binom jk\alpha(\max\{s,t\})
    \]
   for every $1\leq k\leq j-1$.
    
    The endpoint is consistent with the same expression, since
    \[\alpha_{j,0}(s,0)=\frac1{\kappa_n}\alpha(s)=\frac1{\kappa_n}\binom jj\alpha(\max\{s,0\}).
    \]
    Substituting these identities into (\ref{double_sum_exp}) gives the desired formula.
    \end{proof}
    To obtain the result in Corollary \ref{substituting_expression}, we need the following lemma.  
    \begin{lem}[\cite{hug_mussnig_ulivelli_support}, Lemma 4.6]\label{monge_ampere_for_support}
        If $K \in \Kn$, then
        \begin{align*}
            \binom{n}{j}\MA(h_{K}[j],h_{B^n}[n-j];B) = \kappa_{n-j}V_j(K)\ \delta_0(B)
        \end{align*}
        for every Borel set $B \subset \R^n$ and $0 \leq j \leq n$.
    \end{lem}
    \begin{proof}[Proof of Corollary \ref{substituting_expression}]
        Since $\alpha(\max\{|x|,0\}) = \alpha(|x|)$ for all $x \in \R^n$, the proof follows from Theorem \ref{main_theorem_monge_ampere}, and by applying Lemma \ref{monge_ampere_for_support}. 
    \end{proof}
    \subsection*{Acknowledgments}
    The author thanks F. Mussnig and M. Ludwig for fruitful discussions and remarks. This project was supported by the Austrian Science Fund (FWF) 10.55776/P36210 and by PRIMUS/24/SCI/009 provided by Charles University.

\newpage

\appendix
\section{Appendix}\label{Appendix}
As the proof of Proposition \ref{retrieving_densities_composition} is long and technical, we decided to put it in the appendix. 

We recall that for $t,s \geq 0$, 
    \[v_t(x)= \begin{cases}
        0 & \text{if}\ |x| \leq t \\
        |x|-t & \text{if}\ |x| \geq t
    \end{cases}\]
    and 
    \[w_s(x)= \begin{cases}
     0 & \text{if} \ |x| \leq s \ \text{and} \ x_n\geq 0\\
     0 & \text{if}\  |x|_{n-1} \leq s \ \text{and} \ x_n < 0\\
     |x| - s & \text{if}\  |x| > s \ \text{and} \ x_n\geq 0\\
     |x|_{n-1} - s & \text{if}\  |x|_{n-1} > s \ \text{and} \ x_n < 0
    \end{cases}\]
    for every $x \in \R^n$ where $|x|_{n-1}= \sqrt{x_1^2+\dots+x_{n-1}^2}$. Then, the following lemma is easy to see. 
\begin{lem}\label{sum_of_two_functions}
    For $\mu, \lambda \geq 0$, we have the following:
\begin{enumerate}
    \item If $s \leq t$, then 
        $$(\mu \ w_s + \lambda \ v_t)(x)= \begin{cases}
    0 & \text{if} \ |x| \leq s \ \text{and} \ x_n\geq 0\\
    \mu(|x| - s) & \text{if}\  s < |x| \leq t \ \text{and} \ x_n \geq 0\\
   \mu(|x| - s) + \lambda(|x|-t) & \text{if}\  |x| > t \ \text{and} \ x_n\geq 0\\
    0 & \text{if}\  |x|_{n-1} \leq s, \ |x|\leq t \ \text{and} \ x_n < 0\\
    \lambda( |x| - t) & \text{if}\  |x|_{n-1} \leq s, \ |x|> t \ \text{and} \ x_n < 0\\ 
    \mu(|x|_{n-1} - s) & \text{if}\  |x|_{n-1} > s, \ |x| \leq t \ \text{and} \ x_n < 0\\ 
   \mu(|x|_{n-1} - s) + \lambda(|x| - t) & \text{if}\  |x|_{n-1} > s, \ |x| > t \ \text{and} \ x_n < 0. 
        \end{cases}$$
    \item If $s > t $, then 
     $$(\mu \ w_s + \lambda \ v_t)(x)= \begin{cases}
      0 & \text{if} \ |x| \leq t \ \text{and} \ x_n\geq 0\\
    \lambda(|x| - t) & \text{if}\  t < |x| \leq s \ \text{and} \ x_n \geq 0\\
   \mu(|x| - s) + \lambda(|x|-t) & \text{if}\  |x| > s \ \text{and} \ x_n\geq 0\\
    0 & \text{if}\   |x|\leq t \ \text{and} \ x_n < 0\\
    \lambda( |x| - t) & \text{if}\  |x|_{n-1} \leq s, \ |x|> t \ \text{and} \ x_n < 0\\
   \mu(|x|_{n-1} - s) + \lambda(|x| - t) & \text{if}\  |x|_{n-1} > s \ \text{and} \ x_n < 0. 
     \end{cases}$$
\end{enumerate}
\end{lem}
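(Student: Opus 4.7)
The plan is to verify both cases of the lemma by direct case analysis, evaluating $w_s(x)$ and $v_t(x)$ separately at an arbitrary $x \in \R^n$ and summing the resulting values with the scalar weights $\mu$ and $\lambda$. The key observation that drives the split between cases (1) and (2) is which of $s$ and $t$ is larger, because $w_s$ and $v_t$ each switch from zero to a linear piece at their respective radial thresholds, and the geometry of these thresholds depends on the sign of $x_n$ through the two different ``radial'' quantities $|x|$ and $|x|_{n-1}$.

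For case (1), when $s \leq t$, I would split the domain first by the sign of $x_n$. On $\{x_n \geq 0\}$ both $w_s$ and $v_t$ depend only on $|x|$, and the natural sub-cases are $|x| \leq s$, $s < |x| \leq t$, and $|x| > t$, which after direct substitution yield the first three lines of the piecewise expression. On $\{x_n < 0\}$, $w_s(x)$ is governed by $|x|_{n-1}$ while $v_t(x)$ is governed by $|x|$, so the natural sub-cases are the four combinations of $\{|x|_{n-1} \leq s \text{ or } > s\} \times \{|x| \leq t \text{ or } > t\}$; each combination is non-empty because $|x|_{n-1} \leq |x|$ does not impose further restrictions when $s \leq t$. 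Substitution gives the last four lines.

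For case (2), when $s > t$, the same sign split for $x_n$ applies, but the inequality $|x|_{n-1} \leq |x|$ combined with $t < s$ collapses some of the potential sub-cases. On $\{x_n \geq 0\}$ the sub-cases are $|x| \leq t$, $t < |x| \leq s$, and $|x| > s$. On $\{x_n < 0\}$, if $|x| \leq t$ then automatically $|x|_{n-1} \leq |x| \leq t < s$, so $w_s(x) = 0$ and $v_t(x) = 0$; if $|x|_{n-1} > s$ then automatically $|x| \geq |x|_{n-1} > s > t$, so $v_t(x) = |x| - t$ and $w_s(x) = |x|_{n-1} - s$; the remaining sub-case is $|x|_{n-1} \leq s$ with $|x| > t$. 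These three sub-cases exactly match the last three lines of case (2).

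There is no real obstacle beyond bookkeeping: the proof is a purely mechanical case split, and the only subtle point is noticing the aforementioned forced inclusions on $\{x_n < 0\}$ when $s > t$, which is why case (2) has six rather than seven rows.
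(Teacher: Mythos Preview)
Your proposal is correct and matches the paper's treatment: the paper gives no proof at all, merely stating that the lemma ``is easy to see,'' and your direct case analysis on the sign of $x_n$ and the relevant radial thresholds is exactly the obvious verification the paper has in mind. Your remark about the forced inclusions on $\{x_n<0\}$ when $s>t$ (via $|x|_{n-1}\leq |x|$) correctly explains why case~(2) has one fewer row than case~(1).
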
\begin{figure}[H]
\centering

\begin{subfigure}{0.48\textwidth}
\centering
\begin{tikzpicture}[scale=0.82, >=Latex, line cap=round, line join=round, font=\small]

\def\R{3.8}
\def\s{1.35}
\def\t{2.35}
\pgfmathsetmacro{\yb}{sqrt(\t*\t-\s*\s)}
\pgfmathsetmacro{\ang}{acos(\s/\t)}

\begin{scope}
  \clip (-\R,0) rectangle (\R,\R);
  \fill[orange!22, even odd rule]
    (-\R,0) rectangle (\R,\R)
    (0,0) circle (\t);
\end{scope}

\begin{scope}
  \clip (-\R,0) rectangle (\R,\R);
  \fill[yellow!35, even odd rule]
    (0,0) circle (\t)
    (0,0) circle (\s);
\end{scope}

\begin{scope}
  \clip (-\R,-\R) rectangle (\R,0);
  \fill[violet!20, even odd rule]
    (-\R,-\R) rectangle (-\s,0)
    (0,0) circle (\t);
  \fill[violet!20, even odd rule]
    (\s,-\R) rectangle (\R,0)
    (0,0) circle (\t);
\end{scope}

\begin{scope}
  \clip (0,0) circle (\t);
  \clip (-\R,-\R) rectangle (\R,0);
  \fill[green!32] (-\R,-\R) rectangle (-\s,0);
  \fill[green!32] (\s,-\R) rectangle (\R,0);
\end{scope}

\begin{scope}
  \clip (-\R,-\R) rectangle (\R,0);
  \fill[cyan!25, even odd rule]
    (-\s,-\R) rectangle (\s,0)
    (0,0) circle (\t);
\end{scope}

\draw[->, thick] (-\R-0.2,0) -- (\R+0.35,0) node[right] {$x_1$};
\draw[->, thick] (0,-\R-0.2) -- (0,\R+0.35) node[above] {$x_2$};
\fill (0,0) circle (1.1pt);
\node[below left] at (0,0) {$O$};

\draw (\s,0.08) -- (\s,-0.08) node[below=4pt] {$s$};
\draw (-\s,0.08) -- (-\s,-0.08) node[below=4pt] {$-s$};
\draw (\t,0.08) -- (\t,-0.08) node[below=4pt] {$t$};
\draw (-\t,0.08) -- (-\t,-0.08) node[below=4pt] {$-t$};

\draw (0.08,\s) -- (-0.08,\s) node[left=4pt] {$s$};
\draw (0.08,\t) -- (-0.08,\t) node[left=4pt] {$t$};

\draw[red!80!black, very thick]
  (180:\s) arc[start angle=180,end angle=0,radius=\s];

\draw[magenta!80!black, very thick]
  (180:\t) arc[start angle=180,end angle=0,radius=\t];

\draw[blue!80!black, very thick]
  ({180+\ang}:\t) arc[start angle={180+\ang},end angle={360-\ang},radius=\t];

\draw[purple!85!black, very thick]
  (180:\t) arc[start angle=180,end angle={180+\ang},radius=\t];
\draw[purple!85!black, very thick]
  ({360-\ang}:\t) arc[start angle={360-\ang},end angle=360,radius=\t];

\draw[teal!80!black, very thick] (-\s,0) -- (-\s,-\yb);
\draw[teal!80!black, very thick] (\s,0) -- (\s,-\yb);

\draw[brown!80!black, very thick] (-\s,-\yb) -- (-\s,-\R);
\draw[brown!80!black, very thick] (\s,-\yb) -- (\s,-\R);

\fill[black] (-\s,-\yb) circle (1.4pt);
\fill[black] (\s,-\yb) circle (1.4pt);

\node[draw=orange!70!black, fill=white, rounded corners=2pt]
  at (0,3.00) {$A_2$};

\node[draw=yellow!60!black, fill=white, rounded corners=2pt]
  at (0,1.80) {$A_1$};

\node[draw=cyan!60!black, fill=white, rounded corners=2pt]
  at (0.,-3.7) {$A_3$};

\node[draw=cyan!50!black, fill=white, rounded corners=2pt,inner sep=1pt] at (-1.78,-0.9) {$A_4$};

\node[draw=violet!60!black, fill=white, rounded corners=2pt]
  at (-2.95,-2.25) {$A_5$};

\node[red!80!black, fill=white, inner sep=1pt] at (-0.65,0.95) {$B_1$};
\node[magenta!80!black, fill=white, inner sep=1pt] at (1.15,2.15) {$B_2$};
\node[blue!80!black, fill=white, inner sep=1pt] at (0.00,-2.30) {$B_3$};

\node[teal!80!black, fill=white, inner sep=1pt] at (1.5,-0.9) {$B_4$};

\node[brown!80!black, fill=white, inner sep=1pt] at (-1.5,-3.00) {$B_5$};

\node[purple!85!black, fill=white, inner sep=1pt] at (2.,-1.5) {$B_6$};

\node[fill=white, inner sep=1pt] at (0,-2.95) {$B_7$};
\draw[->, thin] (-0.18,-2.80) -- (-\s,-\yb);
\draw[->, thin] (0.18,-2.80) -- (\s,-\yb);

\end{tikzpicture}
\caption{$s\le t$}
\end{subfigure}
\hfill
\begin{subfigure}{0.48\textwidth}
\centering
\begin{tikzpicture}[scale=0.82, >=Latex, line cap=round, line join=round, font=\small]

\def\R{3.8}
\def\t{1.35}
\def\s{2.35}

\begin{scope}
  \clip (-\R,0) rectangle (\R,\R);
  \fill[orange!22, even odd rule]
    (-\R,0) rectangle (\R,\R)
    (0,0) circle (\s);
\end{scope}

\begin{scope}
  \clip (-\R,0) rectangle (\R,\R);
  \fill[yellow!35, even odd rule]
    (0,0) circle (\s)
    (0,0) circle (\t);
\end{scope}

\begin{scope}
  \clip (-\R,-\R) rectangle (\R,0);
  \fill[violet!20] (-\R,-\R) rectangle (-\s,0);
  \fill[violet!20] (\s,-\R) rectangle (\R,0);
\end{scope}

\begin{scope}
  \clip (-\R,-\R) rectangle (\R,0);
  \fill[cyan!25, even odd rule]
    (-\s,-\R) rectangle (\s,0)
    (0,0) circle (\t);
\end{scope}

\draw[->, thick] (-\R-0.2,0) -- (\R+0.35,0) node[right] {$x_1$};
\draw[->, thick] (0,-\R-0.2) -- (0,\R+0.35) node[above] {$x_2$};
\fill (0,0) circle (1.1pt);
\node[below left] at (0,0) {$O$};

\draw (\t,0.08) -- (\t,-0.08) node[below=4pt] {$t$};
\draw (-\t,0.08) -- (-\t,-0.08) node[below=4pt] {$-t$};
\draw (\s,0.08) -- (\s,-0.08) node[below=4pt] {$s$};
\draw (-\s,0.08) -- (-\s,-0.08) node[below=4pt] {$-s$};

\draw (0.08,\t) -- (-0.08,\t) node[left=4pt] {$t$};
\draw (0.08,\s) -- (-0.08,\s) node[left=4pt] {$s$};

\draw[magenta!80!black, very thick]
  (180:\t) arc[start angle=180,end angle=0,radius=\t];

\draw[blue!80!black, very thick]
  (180:\t) arc[start angle=180,end angle=360,radius=\t];

\draw[red!80!black, very thick]
  (180:\s) arc[start angle=180,end angle=0,radius=\s];

\draw[teal!80!black, very thick]
  (-\s,0) -- (-\s,-\R)
  (\s,0) -- (\s,-\R);

\node[draw=orange!70!black, fill=white, rounded corners=2pt]
  at (0,3.00) {$C_2$};

\node[draw=yellow!60!black, fill=white, rounded corners=2pt]
  at (0,1.85) {$C_1$};

\node[draw=cyan!60!black, fill=white, rounded corners=2pt]
  at (0.,-2.45) {$C_3$};

\node[draw=violet!60!black, fill=white, rounded corners=2pt]
  at (-2.95,-2.20) {$C_4$};

\node[magenta!80!black, fill=white, inner sep=1pt] at (0.95,1.05) {$D_1$};
\node[red!80!black, fill=white, inner sep=1pt] at (1.10,2.10) {$D_2$};
\node[blue!80!black, fill=white, inner sep=1pt] at (0.95,-1.05) {$D_3$};

\node[teal!80!black, fill=white, inner sep=1pt] at (2.5,-1.50) {$D_4$};

\end{tikzpicture}
\caption{$t<s$}
\end{subfigure}

\caption{Two-dimensional top view of the decomposition for \(\mu w_s+\lambda v_t\), with coordinates \(x=(x_1,x_2)\). In dimension \(2\), \(|x|=\sqrt{x_1^2+x_2^2}\) and \(|x|_{n-1}=|x_1|\). In the case \(s\le t\), the region \(A_5\) is the lower outer region \(\{|x_1|>s,\ |x|>t,\ x_2<0\}\).}
\label{fig:appendix-decomposition}
\end{figure}

 The following lemma provides a decomposition of Hessian measures of functions depending on less than $n$ variables.

    \begin{lem}[\cite{colesanti_ludwig_mussnig_8}, Lemma 2.9]\label{addition_complementary}
		Let $k\in\{1,\ldots,n-1\}$, $j\in\{0,\ldots,k\}$, and $v\in\fconvf$. If there exists $w\in\fconvfk$ such that
		\[
		v(x_1,\ldots,x_n)=w(x_1,\ldots,x_k)
		\]
		for every $(x_1,\ldots,x_n) \in\R^n$, then
		\[
		\d\Phi^n_j(v;(x_1,\ldots,x_n))=\d\Phi_j^{(k)}(w;(x_1,\ldots,x_k))\d x_{k+1}\cdots \d x_n.
		\]
	\end{lem}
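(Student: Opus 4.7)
The plan is to derive the product decomposition directly from the Steiner-type defining relation of the Hessian measures, exploiting the fact that a function of fewer variables has a subdifferential that degenerates in the remaining directions. As a first step I would observe that, since $v(x_1,\ldots,x_n)$ depends only on the first $k$ coordinates, the defining inequality of the subdifferential immediately yields
\[
\partial v(x_1,\ldots,x_n) = \partial w(x_1,\ldots,x_k) \times \{0\}^{n-k}
\]
for every $(x_1,\ldots,x_n) \in \R^n$. This purely algebraic identity is what drives the whole computation.

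Next I would evaluate the Steiner-type relation defining $\Phi^n_j(v;\cdot)$ on product Borel sets $B = B_1 \times B_2$ with $B_1 \subset \R^k$ and $B_2 \subset \R^{n-k}$ bounded. Using the subdifferential identity above, the ``thickened'' set on the left-hand side of that relation splits as a product,
\[
\{x + sy : x \in B,\ y \in \partial v(x)\} = E_s(B_1) \times B_2,
\]
where $E_s(B_1) := \{x'+sy' : x' \in B_1,\ y' \in \partial w(x')\}$ is a Lebesgue-measurable subset of $\R^k$, being the image under the continuous map $(x',y')\mapsto x'+sy'$ of the (closed) graph of $\partial w$ restricted to $B_1 \times \R^k$. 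Fubini's theorem then gives $\hm^n(E_s(B_1)\times B_2)=\hm^k(E_s(B_1))\,\hm^{n-k}(B_2)$. Substituting the Steiner-type expansion on $\R^k$ for $w$ and matching coefficients of $s^j$ produces
\[
\Phi^n_j(v, B_1\times B_2) = \Phi_j^{(k)}(w, B_1)\,\hm^{n-k}(B_2)
\]
for $0\leq j\leq k$ (and the identity is trivially zero for $j>k$).

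The last step will be to promote this identity from product sets to arbitrary Borel sets, which is immediate since both sides agree on the $\pi$-system of products of bounded Borel sets and define locally finite Borel measures on $\R^n$. The main technical subtlety is the measurability of $E_s(B_1)$ and the resulting use of Fubini, but the upper semicontinuity and closed-graph property of the subdifferential of a convex function make this completely standard. An alternative, perhaps shorter, route would be to approximate $w$ by $C^2$ convex functions, use (\ref{hessian_abs_cont_lebesgue}) together with the block-diagonal structure $\Hess v = \Hess w \oplus 0$ (so that the elementary symmetric functions of the eigenvalues of $\Hess v$ of order at most $k$ coincide with those of $\Hess w$), and then pass to the limit using weak continuity of the Hessian measures with respect to epi-convergence.
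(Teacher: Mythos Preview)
Your argument is correct. The subdifferential identity $\partial v(x) = \partial w(x_1,\dots,x_k)\times\{0\}^{n-k}$ is exactly the right starting point, and the product decomposition of the parallel set on rectangles $B_1\times B_2$, followed by Fubini and coefficient comparison in the Steiner polynomial, yields the identity on the generating $\pi$-system; the extension to all Borel sets is then routine. The measurability issue you flag is real but harmless: the set $E_s(B_1)$ is the continuous image of a Borel subset of the closed graph of $\partial w$, hence analytic and therefore Lebesgue measurable, which is all that is needed. Your alternative $C^2$-approximation route, using the block structure $\Hess v = \Hess w \oplus 0$ together with \eqref{hessian_abs_cont_lebesgue} and weak continuity, is equally valid and arguably cleaner.

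Note, however, that there is nothing to compare your proof against: the present paper does not prove this lemma at all. It is quoted verbatim as Lemma~2.9 of \cite{colesanti_ludwig_mussnig_8} and used as a black box in the Appendix (in the treatment of the singular set $B_4$). Your direct Steiner-formula computation is precisely the kind of argument one would expect in the cited source.
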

    
To prove Proposition \ref{retrieving_densities_composition}, an essential tool will be the \emph{disintegration theorem}.
\begin{theo}[\cite{maggi_2023}, Theorem 17.1]\label{disintegration_theorem}
    Let $\mu$ be a finite Borel measure concentrated on a Borel set $E \subset \R^n$, and let $\pi: E \rightarrow \R$ be the projection map into the last coordinate. Then the following property holds: there exists a Borel set $\Sigma \subset \R$ of full $(\pi_{\#}\mu)$-measure such that for every $\sigma \in \Sigma$, there exists $\mu_{\sigma} \in \mathcal{P}(\R^n)$ such that $\mu_\sigma$ is concentrated on $\pi^{-1}(\sigma)$, and 
        \begin{align*}
            \int_{\R^n}\varphi \d\mu = \int_{\Sigma}\d(\pi_{\#}\mu)(\sigma)\int_{\R^n} \varphi \d\mu_{\sigma},
        \end{align*}
        for every bounded Borel function $\varphi: \R^n \rightarrow \R$, where $(\pi_{\#}\mu)$ denotes the push-forward of the measure $\mu$ through the map $\pi$, that is, 
        \begin{align*}
            (\pi_{\#}\mu)(B) = \mu(\pi^{-1}(B)),
        \end{align*}
        for all $B \subset \R$ Borel set.
\end{theo}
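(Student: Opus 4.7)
The plan is to prove the disintegration theorem via the classical Radon--Nikodym / Riesz representation strategy: construct the fiber probability measures $\mu_\sigma$ from pushforwards of weighted versions of $\mu$, and manage measurability by carrying out the construction on a countable dense family of test functions before extending.

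Set $\nu := \pi_{\#}\mu$, a finite Borel measure on $\R$. For any bounded Borel function $\varphi : \R^n \to \R$, the signed measure on $\R$ defined by $B \mapsto \int_{\pi^{-1}(B)} \varphi\, d\mu$ is absolutely continuous with respect to $\nu$, since its total variation is bounded by $\|\varphi\|_\infty\,\nu$. Radon--Nikodym therefore yields a $\nu$-integrable density $g_\varphi$ for which the identity $\int_{\R^n}\varphi\, d\mu = \int_\R g_\varphi\, d\nu$ already holds. The entire task reduces to realizing $g_\varphi(\sigma) = \int \varphi\, d\mu_\sigma$ for a coherent family of fiber-concentrated probability measures.

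Next I would fix a countable $\Q$-linear subspace $\mathcal{D} \subset C_c(\R^n)$ that is uniformly dense in $C_0(\R^n)$ and contains the constant functions on each compact exhaustion. For each $\varphi \in \mathcal{D}$ pick a Borel representative $g_\varphi$, and discard from $\R$ the countable union of $\nu$-null sets on which $\Q$-linearity, positivity ($\varphi \geq 0 \Rightarrow g_\varphi \geq 0$), the bound $|g_\varphi(\sigma)| \leq \|\varphi\|_\infty$, or the normalization $g_1(\sigma)=1$ might fail. The remaining Borel set $\Sigma$ has full $\nu$-measure. For every $\sigma \in \Sigma$, the map $\varphi \mapsto g_\varphi(\sigma)$ is a positive $\Q$-linear functional of norm $\leq 1$ on $\mathcal{D}$, so it extends uniquely to a positive bounded linear functional on $C_0(\R^n)$; Riesz--Markov then produces $\mu_\sigma \in \mathcal{P}(\R^n)$ with $g_\varphi(\sigma) = \int \varphi\, d\mu_\sigma$ for $\varphi \in \mathcal{D}$. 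The integration identity extends first to $C_0(\R^n)$ by uniform approximation, then to all bounded Borel $\varphi$ by a monotone class argument. For fiber concentration, test against functions of the form $\psi \circ \pi$ with $\psi \in C_b(\R)$: the identity forces $\int \psi \circ \pi\, d\mu_\sigma = \psi(\sigma)$ off another $\nu$-null set, and running over a countable separating family of $\psi$'s shows that $\mu_\sigma$ is supported on $\pi^{-1}(\{\sigma\})$.

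The principal obstacle is null-set management. Radon--Nikodym densities are defined only up to $\nu$-null sets, and there is no \emph{a priori} single exceptional set that is simultaneously negligible for all the uncountably many bounded Borel test functions. The essential trick is to perform the algebraic and order-theoretic verifications (linearity, positivity, normalization, continuity) on a single countable dense family $\mathcal{D}$, so that only a countable union of $\nu$-null sets is removed; the extension to general bounded Borel $\varphi$ is then a routine approximation argument, not one that reintroduces new exceptional sets.
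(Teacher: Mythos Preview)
The paper does not prove this statement at all: Theorem~\ref{disintegration_theorem} is quoted verbatim from an external reference (\cite{maggi_2023}, Theorem~17.1) and used as a black box in the appendix computations. There is therefore nothing in the paper to compare your argument against.

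That said, your sketch is the standard Radon--Nikodym / Riesz--Markov route to disintegration and is essentially correct. One point deserves tightening: when you invoke Riesz--Markov on $C_0(\R^n)$, a positive functional of norm at most $1$ yields a Radon measure of total mass \emph{at most} $1$, not automatically a probability measure; mass could in principle escape to infinity. Your phrase ``contains the constant functions on each compact exhaustion'' is the right fix, but it should be made explicit: include in $\mathcal{D}$ a sequence of cutoffs $\chi_k \nearrow 1$, observe that $g_{\chi_k}(\sigma)$ is nondecreasing in $k$ and bounded by $1$, and use monotone convergence together with $\int g_{\chi_k}\,d\nu = \int \chi_k\,d\mu \to \mu(\R^n) = \nu(\R)$ to conclude $\lim_k g_{\chi_k}(\sigma) = 1$ for $\nu$-a.e.\ $\sigma$, whence $\mu_\sigma(\R^n)=1$. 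With that detail filled in, the argument is complete.
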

Evaluating the map given by (\ref{the_expression}) at $\mu w_s$ and $\lambda v_t$ where $s,t > 0$, and $\mu, \lambda > 0$, yields the following result.
\begin{prop}\label{retrieving_densities_composition}
    If $1 \leq j \leq n$ and $\xi \in T^n_j$, then
   \begin{align*}
     \oz_{j,\xi}(\mu w_s,\lambda v_t)=\frac{1}{n}\kappa_{n-1}\left[\mu^j
        s^{n-j+1}\cR^{-n+j}\alpha(s) +\sum_{k=1}^{j-1}\binom{j}{k} \lambda^{j-k}\mu^k s^{n-k+1} \cR^{-n+k}\alpha\left(\max\{s,t\}\right)\right] \ e_n
   \end{align*}
   for every $s, t > 0$ and every $\mu, \lambda > 0$, where 
   \begin{align*}
       \alpha(r)= \binom{n}{j}\cR^{n-j}\xi(r)
   \end{align*}
   for every $r>0$.
\end{prop}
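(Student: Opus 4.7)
Since $v_t$ is rotation invariant, $v_t\circ\vartheta^{-1}=v_t$ for every $\vartheta\in\SOn$, so the integration over $\SOn$ is trivial and the statement reduces to computing $\ot_{j,\xi}^*(\mu w_s+\lambda v_t)$. Using the Monge--Amp\`ere representation from Theorem \ref{functional_vector_decomposition} together with the polarization identity \eqref{polarization_monge_ampere}, I would decompose
\[
\ot_{j,\xi}^*(\mu w_s+\lambda v_t)=\sum_{k=0}^j\binom{j}{k}\mu^k\lambda^{j-k}I_k,\qquad I_k:=\int_{\R^n}\alpha(|x|)\,x\,\d\MA(w_s[k],v_t[j-k],h_{B^n}[n-j];x),
\]
where $\alpha=\binom{n}{j}\cR^{n-j}\xi$. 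The boundary terms are immediate: $I_0=0$ because the measure $\MA(v_t[j],h_{B^n}[n-j];\cdot)$ is radial while $\alpha(|x|)x$ is odd, and $I_j=\ot_{j,\xi}^*(w_s)=\tfrac{1}{n}\binom{n}{j}\kappa_{n-1}s^{n-j+1}\xi(s)\,e_n$ by Lemma \ref{retrieving_density_vector_valued}, matching the $k=j$ term of the claim after rewriting via $\cR^{-n+j}\alpha=\binom{n}{j}\xi$.

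For $1\leq k\leq j-1$ I would apply a Hadwiger-type reduction: by Lemma \ref{CVA} combined with the rotation invariance of $v_t$, $h_{B^n}$ and $\alpha(|\cdot|)$, the map
\[
u\mapsto\int_{\R^n}\alpha(|x|)\,x\,\d\MA(u[k],v_t[j-k],h_{B^n}[n-j];x)
\]
is a continuous, dually epi-translation invariant, rotation equivariant valuation on $\fconvf$, homogeneous of degree $k$, hence coincides with some $\ot_{k,\beta_t}^*$ for a unique $\beta_t\in T_k^n$ by Theorem \ref{Hadwiger_decom_vector} together with Theorem \ref{functional_vector_decomposition}. Evaluating this at $w_s$ and invoking Lemma \ref{retrieving_density_vector_valued} gives $I_k=\tfrac{1}{n}\binom{n}{k}\kappa_{n-1}s^{n-k+1}\beta_t(s)\,e_n$, so the task will reduce to proving $\binom{n}{k}\beta_t(s)=\cR^{-n+k}\alpha(\max\{s,t\})$. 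To identify $\beta_t$, I would split $I_k$ into the upper and lower half-space contributions using local determination. On $\{x_n>0\}$, $w_s$ agrees with the radial $v_s$ and the mixed measure is rotationally invariant; together with the identity $\int_{\{x_n\geq 0\}}\alpha(|x|)x\,\d\mu=\tfrac{\kappa_{n-1}}{\omega_n}\,e_n\int_{\R^n}\alpha(|x|)|x|\,\d\mu$ valid for any radial measure $\mu$, a direct Steiner-type computation---parameterizing the subdifferential of $\lambda_1v_s+\lambda_2v_t+\lambda_3h_{B^n}$ on the jump spheres as an annular sector and extracting the coefficient of $\lambda_1^k\lambda_2^{j-k}\lambda_3^{n-j}$---will show that this mixed measure is concentrated on the single sphere $\{|x|=\max\{s,t\}\}$ with density $\tfrac{1}{n\max\{s,t\}^{n-1}}\,\d\hm^{n-1}$, yielding $I_k^+\cdot e_n=\tfrac{\kappa_{n-1}}{n}\max\{s,t\}\,\alpha(\max\{s,t\})$.

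On $\{x_n<0\}$, local determination lets me replace $w_s$ by the cylindrically symmetric $\bar w_s(x):=(|x|_{n-1}-s)_+$. The case decomposition from Lemma \ref{sum_of_two_functions} partitions the lower half-space into four open regions $L_0,\ldots,L_3$, and on each of them the Hessian of $\lambda_1\bar w_s+\lambda_2v_t+\lambda_3h_{B^n}$ admits a common zero eigenvector, so the absolutely continuous part of the mixed Monge--Amp\`ere measure vanishes. The measure is therefore concentrated on the cylinder $\{|x|_{n-1}=s,\ x_n<0\}$, the spherical cap $\{|x|=t,\ x_n<0\}$, and their codimension-two intersection. I would evaluate each singular piece by parameterizing its segment-valued subdifferential and computing the swept-out $n$-volume, organizing the $x_n$-integration via Theorem \ref{disintegration_theorem} and reducing the cylindrical contribution to an $(n-1)$-dimensional Hessian calculation via Lemma \ref{addition_complementary}. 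Combining with $I_k^+$ and repeatedly applying the defining identity $\cR^m\xi(s)=s^m\xi(s)+m\int_s^\infty r^{m-1}\xi(r)\,\d r$ will collapse everything to $\tfrac{1}{n}\kappa_{n-1}s^{n-k+1}\cR^{-n+k}\alpha(\max\{s,t\})$. The hard part will be this lower half-space assembly: the interplay of the cylindrical symmetry of $\bar w_s$ with the spherical symmetry of $v_t$, combined with the codimension-two corner contribution, makes the singular calculation delicate, and verifying that the pieces combine exactly into the advertised $\cR$-transform form will require careful bookkeeping.
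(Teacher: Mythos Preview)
Your plan is correct and takes a genuinely different route from the paper's proof. The paper works directly with the \emph{Hessian measure} representation \eqref{representation_vector_fconvfz}, computing $\int_{\R^n}\xi(|x|)x_n\,\d\Phi_j^n(\mu w_s+\lambda v_t;x)$ by splitting $\R^n$ into a dozen regions (five smooth pieces $A_1,\ldots,A_5$ and seven singular sets $B_1,\ldots,B_7$ for $s\leq t$, and analogous families for $s>t$), evaluating $[\Hess(\mu w_s+\lambda v_t)]_j$ or the parallel-set volume on each, and finally reading off the $\mu^k\lambda^{j-k}$ coefficients. You instead pass through the \emph{mixed Monge--Amp\`ere} representation of Theorem~\ref{functional_vector_decomposition} and polarize first, which isolates each $I_k$ and removes the bookkeeping of $\mu,\lambda$-coefficients. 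Your key structural observation---that on every smooth region in the lower half-space the Hessians of $\bar w_s$, $v_t$ and $h_{B^n}$ share the radial null vector $x$, so all mixed discriminants vanish---has no analogue in the paper and eliminates the counterparts of regions $A_3,A_4,A_5$ (and $C_3,C_4$) outright; likewise your radial argument on the upper half dispatches $A_1,A_2,B_1,B_2$ in one stroke. What remains in both approaches are the genuinely singular lower-half pieces (cylinder, spherical cap, their codimension-two intersection), and there your outline---subdifferential parametrization, disintegration via Theorem~\ref{disintegration_theorem}, dimension reduction via Lemma~\ref{addition_complementary}---matches the paper's technique essentially verbatim. So your approach buys a cleaner conceptual organization and avoids the absolutely-continuous computations, while the paper's approach is more self-contained (it never invokes the Hadwiger-type Theorem~\ref{Hadwiger_decom_vector} as an intermediate tool, though that theorem is already established in \cite{mouamine_mussnig_2} so no circularity arises for you). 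The final singular computations are of comparable length either way; you have correctly identified that assembly as the hard part.
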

\begin{proof}
    Let $1 \leq j \leq n$, $\xi \in T^n_j$, and $\mu, \lambda > 0$. Since the function $\mu w_s + \lambda v_t \in \fconvfz$, it follows by (\ref{representation_vector_fconvfz}) and by the rotation invariance of the function $v_t$, that
    \begin{equation}\label{the_expression_2}
        \begin{aligned}
            \oz_{j,\xi}(\mu w_s,\lambda v_t)&=\int_{\SOn} \ot_{j,\xi}^*(\mu w_s+(\lambda v_t\circ \vartheta^{-1})) \d \vartheta \\
          &= \int_{\R^n}\xi(|x|)x\d\Phi^n_j(\mu  w_s + \lambda v_t; x)
        \end{aligned}
    \end{equation}
    for all $s,t > 0$. By Lemma \ref{sum_of_two_functions}, we will need to distinguish between the cases when $s \leq t$ and $s > t$ in the last integral of (\ref{the_expression_2}). The hypersurfaces on which the expression of \(\mu w_s+\lambda v_t\) changes are
    \[|x|=s,\qquad |x|=t,\qquad |x|_{n-1}=s,\qquad x_n=0.
    \]
    Their relative position depends on whether \(s\leq t\) or \(t<s\). This is why the proof is divided into two cases. In each case, the open regions are those on which \(\mu w_s+\lambda v_t\) is \(C^2\), and their contributions are  computed using the density formula
    \[
    \d\Phi_j^n(v;x)=[\Hess v(x)]_j\,\d x.
    \]
    The remaining pieces are lower-dimensional transition regions contained in
    the hypersurfaces above. We call them singular regions only in this sense: they are not open \(C^2\)-regions, so the density formula is not applied to them directly. Their contributions are obtained either by local determination on an explicit open neighborhood, or by computing the corresponding parallel sets and, when necessary, by disintegrating the resulting Hessian measure.

    Let us first point out that for every $\vartheta \in \SOn$ such that $\vartheta e_n= e_n$, then $\vartheta$ acts only on the first $n-1$ coordinates and therefore preserves the quantities $|x|$, $|x|_{n-1}$, and $x_n$. Since both $w_s$ and $v_t$ depend only on these quantities, it follows directly from the definitions that 
    \begin{align*}
        v_t \circ \vartheta^{-1}=v_t \quad \text{and} \quad w_s \circ \vartheta^{-1}=w_s
    \end{align*}
    for every $t,s \geq 0$. Using the equivariance of $\oz_{j,\xi}$, one then obtains
   
    \begin{align*}
       \oz_{j,\xi}(\mu w_s, \lambda v_t  ) = \oz_{j,\xi}(\mu w_s \circ \vartheta^{-1}, \lambda v_t\circ \vartheta^{-1}) = \vartheta \oz_{j,\xi}(\mu w_s, \lambda v_t),
    \end{align*}
    hence $\oz_{j,\xi}(\mu w_s, \lambda v_t  )$ is invariant under all rotations fixing $e_n$ and therefore  $\oz_{j,\xi}(\mu w_s, \lambda v_t  )$ must be parallel to $e_n$. 
    
    \begin{enumerate}
        \item Let $s < t$. The decomposition of $\R^n$ consists of five $C^2$-regions $A_1,\dots,A_5$ and seven singular regions $B_1,\dots,B_7$. In this case the upper half-space is cut by the two spherical levels $|x|=s$ and $|x|=t$, whereas in the lower half-space the cylindrical level $|x|_{n-1}=s$ interacts with the spherical level $|x|=t$. The regions $A_1,\ldots,A_5$ are precisely the open pieces on which the formula in Lemma \ref{sum_of_two_functions} is smooth. The regions $B_1,\ldots,B_7$ are the transition pieces where one crosses one or more of the hypersurfaces $|x|=s$, $|x|=t$, $|x|_{n-1}=s$, and $x_n=0$. The computation below therefore separates the absolutely continuous contributions from the boundary contributions.
        \begin{enumerate}
            \item The function $\mu  w_s + \lambda v_t$ is of class $C^2$ on the set $A_1=\{x \in \R^{n} : s < |x| < t, x_n > 0\}$, and the Hessian matrix of $\mu  w_s + \lambda v_t$ at a point $x$ in this set has $(n-1)$-eigenvalues equal to $\mu/|x|$ and the last eigenvalue is equal to zero. Hence,
                $$[\Hess  (\mu  w_s + \lambda v_t)(x)]_j=\begin{cases}
                     \binom{n-1}{j}\frac{\mu^j}{|x|^j} & \text{if}\ 0 \leq j \leq n-1 \\
                    0 & \text{if}\ j=n 
                 \end{cases}$$
            which implies, for $0 \leq j \leq n-1 $, that 
                \begin{align*}
                    \int_{A_1}\xi(|x|)x_n[\Hess (\mu  w_s + \lambda v_t)(x)]_j\d x&=\mu^j\binom{n-1}{j}\int_{s}^{t} r^{n-j} \xi(r) \d r \int_{\sn \cap {y_n >0}}y_n \d y\  e_n \\
                    &=\mu^j\binom{n-1}{j} \frac{\omega_{n-1}}{n-1} \int_{s}^{t} r^{n-j} \xi(r) \d r. 
                \end{align*}    
    
            \item The function $\mu  w_s + \lambda v_t$ is of class $C^2$ on $A_2=\{x \in \R^{n} : |x| > t, x_n > 0\}$, and its Hessian matrix at a point $x$ in this set has $(n-1)$-eigenvalues equal to $(\mu+\lambda)/|x|$ and the last eigenvalue is equal to zero. Hence,
            $$[\Hess  (\mu  w_s + \lambda v_t)(x)]_j=\begin{cases}
                     \binom{n-1}{j}\frac{(\mu+\lambda)^j}{|x|^j}  & \text{if}\ 0 \leq j \leq n-1 \\
                    0 & \text{if}\ j=n 
                 \end{cases}$$
            which implies, for $0 \leq j \leq n-1 $, that
            \begin{align*}
                \int_{A_2}\xi(|x|)&x_n[\Hess (\mu  w_s + \lambda v_t)(x)]_j\d x  \\&=(\mu+\lambda)^j\binom{n-1}{j}\int_{t}^{\infty} r^{n-j} \xi(r) \d r \int_{\sn \cap {y_n > 0}}y_n \d y\\
                &=(\mu+\lambda)^j\binom{n-1}{j}\frac{\omega_{n-1}}{n-1} \int_{t}^{\infty} r^{n-j} \xi(r) \d r.
            \end{align*}
             \item The function $\mu  w_s + \lambda v_t $ is of class $C^2$ on $A_3=\{x \in \R^{n}: |x|_{n-1} < s, |x| > t, x_n < 0\}$ and its Hessian matrix at a point $x$ in this set has $(n-1)$-eigenvalues equal to $\lambda/|x|$ and the last eigenvalue is equal to zero. Hence,
              $$[\Hess  (\mu  w_s + \lambda v_t)(x)]_j=\begin{cases}
                     \binom{n-1}{j}\frac{\lambda^j}{|x|^j}  & \text{if}\ 0 \leq j \leq n-1 \\
                    0 & \text{if}\ j=n 
                 \end{cases}$$
            which implies, for $0 \leq j \leq n-1 $, that
            \begin{align*}
                \int_{A_3}&\xi(|x|)x_n[\Hess (\mu  w_s + \lambda v_t)(x)]_j\d x
                \\
                &= \lambda^j\binom{n-1}{j}  \int_{0}^{+\infty}r^{n-j}\xi(r) \int_{\sn \cap e_n^{\perp}}\int_{-1}^{1} \tau (1-\tau^2)^{\frac{n-3}{2}} \mathds{1}_{A_3} \d\tau \d y \d r\\
                &=\lambda^j \binom{n-1}{j}\omega_{n-1} \int_{t}^{+\infty}r^{n-j}\xi(r) \int_{-1}^{-\sqrt{1-\frac{s^2}{r^2}}} \tau (1-\tau^2)^{\frac{n-3}{2}}\d\tau \d r\\
                &=-\lambda^j \binom{n-1}{j} \frac{\omega_{n-1}}{n-1} s^{n-1} \int_{t}^{+\infty}r^{1-j}\xi(r)\d r.
            \end{align*}
             \item The function $\mu  w_s + \lambda v_t $ is of class $C^2$ on $A_4=\{x \in \R^n: |x|_{n-1} > s, |x| < t, x_n < 0\}$ and the eigenvalues of its Hessian at a point $x$ in this set are given by 
                $$[\Hess  (\mu  w_s + \lambda v_t)(x)]_j=\begin{cases}
                     \binom{n-2}{j} \frac{ \mu^j}{|x|^j_{n-1}} & \text{if}\ 0 \leq j \leq n-2 \\
                    0 & \text{if}\ j\in \{n-1,n\} 
                 \end{cases}$$
             which implies, for $0 \leq j \leq n-2 $, that
             \begin{align*}
                \int_{A_4}\xi(|x|)x_n&[\Hess (\mu  w_s + \lambda v_t)(x)]_j\d x \\
                & =\mu^j\binom{n-2}{j}  \int_{0}^{+\infty}r^{n-j}\xi(r) \int_{\sn \cap e_n^{\perp}}\int_{-1}^{1} \tau (1-\tau^2)^{\frac{n-j-3}{2}} \mathds{1}_{A_4} \d\tau \d y \d r\\
                &= \mu^j\binom{n-2}{j}\omega_{n-1}  \int_{s}^{t}r^{n-j}\xi(r)\int_{-\sqrt{1-\frac{s^2}{r^2}}}^{0} \tau (1-\tau^2)^{\frac{n-j-3}{2}}\d\tau \d r \\
                &=- \mu^j\binom{n-2}{j}\frac{\omega_{n-1}}{n-j-1}  \left[  \int_{s}^{t}r^{n-j}\xi(r) \d r - s^{n-j-1} \int_{s}^{t}r\xi(r)\d r \right]\\
                &=- \mu^j\binom{n-1}{j}\frac{\omega_{n-1}}{n-1}  \left[  \int_{s}^{t}r^{n-j}\xi(r) \d r - s^{n-j-1} \int_{s}^{t}r\xi(r)\d r \right].
            \end{align*}
             \item The function $\mu  w_s + \lambda v_t $ is of class $C^2$ on $A_5=\{x \in \R^n: |x|_{n-1} > s, |x| > t, x_n < 0\}$ and the eigenvalues of its Hessian at a point $x$ in this set are given by  $$[\Hess (\mu  w_s + \lambda v_t)(x)]_j= \begin{cases}
                 \binom{n-2}{j}\left(\frac{\lambda}{|x|} + \frac{\mu}{|x|_{n-1}}\right)^j \\
                 + & \text{if}\  0 \leq j \leq n-2 \\
                 \binom{n-2}{j-1}\left(\frac{\lambda}{|x|} + \frac{\mu}{|x|_{n-1}}\right)^{j-1}\frac{\lambda}{|x|} \\
                 \\
                \left(\frac{\lambda}{|x|} + \frac{\mu}{|x|_{n-1}}\right)^{n-2}\frac{\lambda}{|x|} & \text{if}\  j=n-1 \\
                \\
                0 & \text{if}\  j=n
             \end{cases}$$
            this implies, for $0 \leq j \leq n-2 $, that
            \begin{align*}
                &\int_{A_5}\xi(|x|)x_n[\Hess (\mu  w_s + \lambda v_t)(x)]_j\d x= \\
                &\omega_{n-1}\left[\binom{n-2}{j}\sum_{k=0}^{j}\binom{j}{k}\mu^k \lambda^{j-k}\int_{t}^{+\infty}r^{n-j}\xi(r) \int_{-\sqrt{1-\frac{s^2}{r^2}}}^{0} \tau (1-\tau^2)^{\frac{n-k-3}{2}} \d\tau \d r \ +\right. \\
                &\left. \binom{n-2}{j-1}\sum_{k=0}^{j-1}\binom{j-1}{k}\mu^k \lambda^{j-k}\int_{t}^{+\infty}r^{n-j}\xi(r) \int_{-\sqrt{1-\frac{s^2}{r^2}}}^{0} \tau (1-\tau^2)^{\frac{n-k-3}{2}} \d\tau \d r   \right]\\
                &=-\omega_{n-1} \times \\
                & \left[\binom{n-2}{j}\sum_{k=0}^{j}\binom{j}{k}\frac{\mu^k \lambda^{j-k}}{n-k-1}\left(\int_{t}^{+\infty}r^{n-j}\xi(r)\d r - s^{n-k-1} \int_{t}^{+\infty}r^{k-j+1}\xi(r)\d r \right)+ \right. \\
                &\left.\binom{n-2}{j-1}\sum_{k=0}^{j-1}\binom{j-1}{k}\frac{\mu^k \lambda^{j-k}}{n-k-1}\left(\int_{t}^{+\infty}r^{n-j}\xi(r)\d r - s^{n-k-1} \int_{t}^{+\infty}r^{k-j+1}\xi(r)\d r \right) \right] \\
                &= - \frac{\omega_{n-1}}{n-1}\binom{n-1}{j} \times \\
                &\left[\sum_{k=0}^{j}\binom{j}{k}\mu^k \lambda^{j-k}\left(\int_{t}^{+\infty}r^{n-j}\xi(r)\d r - s^{n-k-1} \int_{t}^{+\infty}r^{k-j+1}\xi(r)\d r \right) \right],
            \end{align*}
            and
              \begin{align*}
                &\int_{A_5}\xi(|x|)x_n[\Hess (\mu  w_s + \lambda v_t)(x)]_{n-1}\d x\\
                &= \omega_{n-1}\sum_{k=0}^{n-2}\binom{n-2}{k}\mu^k \lambda^{n-k-1}\int_{t}^{+\infty}r\xi(r) \int_{-\sqrt{1-\frac{s^2}{r^2}}}^{0} \tau (1-\tau^2)^{\frac{n-k-3}{2}} \d\tau \d r\\
                &= -\omega_{n-1}\sum_{k=0}^{n-2}\binom{n-2}{k}\frac{\mu^k \lambda^{n-k-1}}{n-k-1}\left(\int_{t}^{+\infty}r\xi(r)\d r - s^{n-k-1} \int_{t}^{+\infty}r^{k-n+2}\xi(r)\d r \right)\\
                &=-\frac{\omega_{n-1}}{n-1}\sum_{k=0}^{n-2}\binom{n-1}{k}\mu^k \lambda^{n-k-1}\left(\int_{t}^{+\infty}r\xi(r)\d r - s^{n-k-1} \int_{t}^{+\infty}r^{k-n+2}\xi(r)\d r\right),
            \end{align*}
            for $j=n-1$.
             \item For the first set of singular points $B_1= \{x \in \R^n  : |x| = s, \ x_n > 0\}$, set $U_1 = \{x \in\R^n: |x|< t, x_n > 0 \}$. Then $U_1$ is an open set and $B_1 \subset U_1$. By Lemma \ref{sum_of_two_functions}, for every $x \in U_1$,
             \[\mu w_s + \lambda v_t = \mu v_s.\]
             Let $ E \subset B_1$ be a Borel set. Since $B_1 \subset U_1$, we have $E \subset U_1$. Hence, by the local determination property of Hessian measures (\ref{locally_determined}) on the open set $U_1$, it follows that
             \[\Phi^n_j(\mu  w_s + \lambda v_t; E)= \Phi^n_j(\mu v_s; E).\]
             Since this holds for every Borel set $E \subset B_1$, we obtain 
             \[\Phi^n_j(\mu  w_s + \lambda v_t; \cdot)|_{B_1} = \Phi^n_j(\mu v_s;\cdot)|_{B_1}= \mu^j \Phi^n_j(v_s;\cdot)|_{B_1}.\]
             Since $v_s$ is radial, this implies that  $\Phi^n_j( v_s,\cdot)|_{\{|x|=s\}}$ is  rotation invariant. Hence, it is proportional to the restriction of $\hm^{n-1}$ to $|x|=s$. Thus, restricting to $B_1$ there exists a constant $C_{n,j,s}$ such that,
    \begin{align*}
        \int_{B_1} \gamma(x) \d\Phi^n_j(v_s;x)= C_{n,j,s} \int_{B_1}\gamma(x)\d\hm ^{n-1}(x)
    \end{align*}
    for every $\gamma \in C_b(\R^n)$, which implies that
    \begin{align*}
        \Phi^n_j(v_s;B_1)&= C_{n,j,s}\int_{B_1}\d\hm ^{n-1}(x) \\
        &= \frac{\omega_{n}}{2} C_{n,j,s}\  s^{n-1} 
    \end{align*}
    and by a similar proof to Lemma \ref{retrieving_densities_for_volumes}, we have that 
    \begin{align*}
        \Phi^n_j(v_s;B_1)=\frac{ \kappa_{n}}{2}\binom{n}{j}s^{n-j}
    \end{align*}
    for every $1 \leq j \leq n$. Hence,
    \begin{align*}
        C_{n,j,s}=\frac{1}{n}\binom{n}{j}s^{1-j}
    \end{align*}
    for every $1 \leq j \leq n$ and,
    \begin{align*}
        \int_{B_1}\xi(|x|)x_n\d\Phi^n_j(\mu  w_s + \lambda v_t;x)&= \mu^j\int_{B_1}\xi(|x|)x_n\d\Phi^n_j(v_s;x) \\
        &= \frac{1}{n}\mu^j\binom{n}{j}s^{1-j}\int_{B_1} \xi(|x|)x_n \d\hm ^{n-1}(x) \\
        &=\frac{1}{n}\mu^j \binom{n}{j}s^{n-j+1}\xi(s)\int_{\sn \cap \{x_n > 0\}}x_n \d\hm ^{n-1}(x)\\
        &=\frac{1}{n} \mu^j\binom{n}{j}\frac{\omega_{n-1}}{n-1}s^{n-j+1}\xi(s).
    \end{align*}
    \item For the second set of singular points $B_2= \{x \in \R^n: |x| = t, x_n >0\}$. Since $\mu  w_s + \lambda v_t$ is radial in a neighborhood of $B_2$, by the local determination property of the Hessian measures, the restriction $\Phi^n_j(\mu  w_s + \lambda v_t;\cdot)|_{B_2}$ is proportional to $\hm^{n-1}|_{B_2}$. Thus, there exists a constant $C_{n,j,t}$ such that,
    \begin{align*}
        \int_{B_2} \gamma(x) \d\Phi^n_j(\mu  w_s + \lambda v_t;x)= C_{n,j,t} \int_{B_2}\gamma(x)\d\hm ^{n-1}(x)
    \end{align*}
    for every $\gamma \in C_b(\R^n)$. It follows that,
    \begin{align*}
        \Phi^n_j(\mu w_s + \lambda v_t;B_2)&= C_{n,j,t}\int_{B_2}\d\hm ^{n-1}(x) \\
        &=\frac{\omega_{n}}{2} C_{n,j,t}\  t^{n-1} 
    \end{align*}
    and on the other hand, by Lemma \ref{singular_point_addition},
    \begin{align*}
        \partial (\mu w_s + \lambda v_t)(x) =  \Bigl\{ \alpha \frac{x}{|x|} : \alpha \in [\mu,\mu+\lambda]\Bigl\}
    \end{align*}
    for every $x \in B_2$, which implies that 
    \begin{align*}
        P_p(\mu w_s + \lambda v_t, B_2) =  \Bigl\{ (t  + \alpha)x  \ | \ \alpha \in [\mu p,(\mu+\lambda)p], x\in \sn, \ x_n > 0\Bigl\}.
    \end{align*}
   Thus, for every $1 \leq j \leq n$,
    \begin{align*}
        \Phi^n_j(\mu  w_s + \lambda v_t;B_2)&= \binom{n}{j}\frac{\kappa_{n}}{2} t^{n-j} \sum_{k=0}^{j-1}\binom{j}{k}\mu^{k}\lambda^{j-k}.
    \end{align*}
   It follows that, 
    \begin{align*}
        C_{n,j,t}=\frac{1}{n}\binom{n}{j}t^{1-j} \sum_{k=0}^{j-1}\binom{j}{k}\mu^{k}\lambda^{j-k} 
    \end{align*}
   for every $1 \leq j\leq n$. Thus,
    \begin{align*}
        \int_{B_2}&\xi(|x|)x_n\d\Phi^n_j(\mu  w_s + \lambda v_t;x)
        \\
        &=\frac{1}{n}\binom{n}{j}t^{1-j}\sum_{k=0}^{j-1}\binom{j}{k}\mu^{k}\lambda^{j-k}  \int_{B_2} \xi(|x|)x_n \d\hm ^{n-1}(x) \\
        &=\frac{1}{n}\binom{n}{j}t^{n-j+1}\xi(t)\sum_{k=0}^{j-1}\binom{j}{k}\mu^{k}\lambda^{j-k} \int_{\sn \cap \{x_n >0\}}x_n \d\hm ^{n-1}(x)\\
        &=\frac{1}{n}\frac{\omega_{n-1}}{n-1}\binom{n}{j}t^{n-j+1}\xi(t)\sum_{k=0}^{j-1}\binom{j}{k}\mu^{k}\lambda^{j-k}.
    \end{align*}
    \item For the third set of singular points $B_3 = \{x \in \R^n: |x|_{n-1}<s, |x| = t, x_n < 0\}$, set $U_3 = \{x\in\R^n : |x|_{n-1}<s,\ x_n<0\}$. Then $U_3$ is open and $B_3\subset U_3$. Moreover, on $U_3$, we have
    \[
    \mu w_s+\lambda v_t=\lambda v_t.
    \]
    Let $E\subset B_3$ be a Borel set. Since $B_3\subset U_3$, we have $E\subset U_3$. Hence, by the local determination property of Hessian measures (\ref{locally_determined}) applied on the open set $U_3$, we have
    \begin{align*}
        \Phi^n_j(\mu  w_s + \lambda v_t;E) =\Phi^n_j(\lambda v_t;E).
    \end{align*}
    Since this holds for every Borel set $E\subset B_3$, we obtain
    \[ \Phi^n_j(\mu  w_s + \lambda v_t;\cdot)|_{B_3} =\Phi^n_j(\lambda v_t;\cdot)|_{B_3}= \lambda^j\Phi^n_j(v_t;\cdot)|_{B_3}.\]
    Since $ v_t$ is radially symmetric, this implies that $\Phi^n_j( v_t;\cdot)|_{\{|x|=t\}}$ is rotation invariant. Hence, restricting to $B_3$, there exists a constant $K_{n,j,t}$ such that, 
    \begin{align*}
        \int_{B_3} \gamma(x) \d\Phi^n_j( v_t;x)= K_{n,j,t} \int_{B_3 }\gamma(x)\d\hm ^{n-1}(x)
    \end{align*}
    for every $\gamma \in C_b(\R^n)$. We obtain that,
    \begin{align*}
        \Phi^n_j(v_t;B_3)&= K_{n,j,t}\int_{B_3}\d\hm ^{n-1}(x) \\
        &=\omega_{n}K_{n,j,t}\ t^{n-1} \int_{-1}^{-\sqrt{1-\frac{s^2}{t^2}}} (1-\tau^2)^{\frac{n-3}{2}}\d\tau.
    \end{align*}
    On the other hand, using a similar proof of Lemma \ref{retrieving_densities_for_volumes}, for every $ 1 \leq j \leq n$, one can show that
    \begin{align*}
        \Phi^n_j(v_t;B_3)= \kappa_n \binom{n}{j}t^{n-j}\int_{-1}^{-\sqrt{1-\frac{s^2}{t^2}}} (1-\tau^2)^{\frac{n-3}{2}}\d\tau
    \end{align*}
    It follows that, 
    \begin{align*}
       K_{n,j,t} = \frac{1}{n}\binom{n}{j}  t^{1-j}.
    \end{align*}
    Hence,
    \begin{align*}
        \int_{B_3}&\xi(|x|)x_n\d\Phi^n_j(\mu  w_s + \lambda v_t;x)
        \\&= \frac{1}{n}\binom{n}{j} \lambda^j t^{1-j} \int_{B_3} \xi(|x|)x_n \d\hm ^{n-1}(x)\\
        &=\frac{1}{n}\binom{n}{j} \lambda^j  t^{n-j+1}\xi(t)\int_{\sn \cap |x|_{n-1} < \frac{s}{t} \cap x_n < 0}x_n \d\hm ^{n-1}(x)\\
       & = \frac{1}{n}\binom{n}{j} \lambda^j  t^{n-j + 1}\xi(t) \int_{\sn \cap e_n^{\perp}} \int_{-1}^{-\sqrt{1-\frac{s^2}{t^2}}} \tau(1-\tau^2)^{\frac{n-3}{2}} \d\tau \d\omega\\
        & = -\frac{1}{n}\frac{\omega_{n-1}}{n-1} \binom{n}{j} \lambda^j t^{2-j} s^{n-1} \xi(t).
    \end{align*}
    \item For the fourth set of singular points $B_4= \{x \in \R^n : |x|_{n-1} = s,|x| < t, x_n < 0\}$, let 
    \begin{align*}
        A = \{x \in \R^n: |x| < t, x_n < 0\}
    \end{align*}
    by Lemma \ref{sum_of_two_functions}, for every $x \in A$, we have
        \begin{align*}
            (\mu w_s + \lambda v_t)(x_1,\dots,x_n) = \mu \max\{0,|x|_{n-1}-s\}.
        \end{align*} 
   Thus, on the open set $A$, the function $(\mu w_s + \lambda v_t)(x_1,\dots,x_n)$ is locally determined by the function
   \begin{align*}
       x \mapsto \mu \max\{0,|x|_{n-1}-s\}.
   \end{align*}
   Consequently, by the local determination property (\ref{locally_determined}), the corresponding Hessian measures coincide on every Borel subset of $A$, and in particular for $B_4$.
   It follows from Lemma \ref{addition_complementary}, for every $1 \leq j \leq n -1$, that
    \begin{align*}
        \int_{B_4}\xi(|x|)x_n&\d\Phi^n_j(\mu  w_s + \lambda v_t;x)\\&= \mu^j\int_{B_4} \xi(|x|)x_n\d\Phi^{n-1}_j( \max\{0,|x|_{n-1}-s\};(x_1,\dots,x_{n-1}))\d x_n.
    \end{align*}
    By Lemma \ref{retrieving_densities_for_volumes}, 
    \begin{align*}
        \Phi^{n-1}_j(\max\{0,|x|_{n-1}-s\};s\s^{n-2})=\frac{\omega_{n-1}}{n-1}\binom{n-1}{j}s^{n-j-1}
    \end{align*}
    and hence for every $1 \leq j\leq n-1$,
    \begin{align*}
        \int_{B_4}\xi(|x|)&x_n\d\Phi^n_j(\mu  w_s + \lambda v_t;x)
         \\&=\mu^j\int_{B_4} \xi(|x|)x_n\d\Phi^{n-1}_j( \max\{0,|x|_{n-1}-s\};\left(x_,\cdots,x_{n-1}\right))\d x_n\\ 
         &=\mu^j\binom{n-1}{j}\frac{\omega_{n-1}}{n-1}s^{n-j-1}\int_{\sqrt{s^2 + x_n^2}< t \,\cap\, x_n < 0}\xi\left(\sqrt{s^2+x_n^2}\right)x_n\d x_n\\
         &=- \mu^j\binom{n-1}{j}\frac{\omega_{n-1}}{n-1}s^{n-j-1}\int_{s}^{t}r\xi(r) \d r.
    \end{align*}
   
    \item For the fifth set of singular points given by $B_5= \{x \in \R^n : |x|_{n-1} = s, |x| >  t, x_n < 0\}$, we remark that $x_n = - \sqrt{|x|^2 - s^2}$. By (\ref{singular_point_addition}), we have
    \begin{align*}
        \partial( \mu w_s + \lambda v_t)(x) = \partial (\mu w_s)  + \partial(\lambda v_t)=  \Bigl\{  \left(\alpha \frac{x_1}{|x|_{n-1}},\dots,\alpha \frac{x_{n-1}}{|x|_{n-1}},0\right): \alpha \in [0,\mu]\Bigl\} + \lambda \frac{x}{|x|}
    \end{align*}
   for every $ x \in B_5$. Therefore the parallel set of $B_5$ is as follows,
     \begin{align*}
         &P_p(\mu  w_s + \lambda v_t,B_5)=\\
         & \Bigl\{ x + p \alpha \frac{(x - x_n e_n)}{|x|_{n-1}} + p \lambda \frac{x}{|x|} :  |x| > t, |x|_{n-1} = s,   x_n = - \sqrt{|x|^2 - s^2}, \alpha \in [0,\mu] \Bigl\}=\\
         &\Bigl\{ (r + p\lambda)(\tau e_n + \sqrt{1-\tau^2}v) + p \alpha v : v \in \sn\cap e_n^{\perp}, r > t,  \tau = -\frac{1}{r}\sqrt{r^2-s^2}, \alpha \in [0,\mu]\Bigl\}
     \end{align*}
     and thus 
     \begin{align*}
          &P_p(\mu  w_s + \lambda v_t,B_5)=\\
         &\Bigl\{ -\left(1 + p\frac{\lambda}{r}\right)\sqrt{r^2-s^2} e_n + \left(s + p \left(\frac{\lambda s}{r} + \alpha\right)\right)v : v \in \sn\cap e_n^{\perp},  r > t,  \alpha \in [0,\mu]\Bigl\}
     \end{align*}
     for all $p \geq 0$. Hence, the $n$-dimensional Hausdorff measure of the parallel set $P_p(\mu  w_s + \lambda v_t,B_5)$ is given by 
        \begin{align*}
            &\omega_{n-1}\int_{t}^{+\infty}\int_{0}^{\mu} \left[p\lambda\frac{s^2}{r^2\sqrt{r^2-s^2}} + \frac{r}{\sqrt{r^2-s^2}}\right] p \left(s + p \left(\frac{\lambda s}{r} + \alpha\right)\right)^{n-2} \d\alpha\d r=\frac{\omega_{n-1}}{n-1} \times\\
            &\int_{t}^{+\infty}\left[p\lambda\frac{s^2}{r^2\sqrt{r^2-s^2}} + \frac{r}{\sqrt{r^2-s^2}}\right]\left[ \left(s + p \left(\frac{\lambda s}{r} + \mu\right)\right)^{n-1} - \left(s + p\frac{\lambda s}{r} \right)^{n-1} \right]\d r=\\
            &\frac{\omega_{n-1}}{n-1}\sum_{j=1}^{n-1}p^{j} \binom{n-1}{j}\sum_{k=1}^{j}\binom{j}{k}\lambda^{j-k}\mu^{k}s^{n-k-1} \times \\
            &\int_{t}^{+\infty}\left[p\lambda\frac{s^2}{r^2\sqrt{r^2-s^2}} + \frac{r}{\sqrt{r^2-s^2}}\right]\frac{1}{r^{j-k}} \d r=\\
            &\frac{\omega_{n-1}}{n-1}\sum_{j=1}^{n-1}p^{j+1}\binom{n-1}{j}\sum_{k=1}^{j}\binom{j}{k} \lambda^{j-k+1}\mu^k  s^{n-k+1} \int_{t}^{+\infty}\frac{1}{r^{j-k+2}\sqrt{r^2-s^2} }\d r +  \\
            & \frac{\omega_{n-1}}{n-1}\sum_{j=1}^{n-1} p^{j} \binom{n-1}{j} \sum_{k=1}^{j}\binom{j}{k}\lambda^{j-k}\mu^k s^{n-k-1}\int_{t}^{+\infty}\frac{1}{r^{j-k-1}\sqrt{r^2-s^2} }\d r. 
        \end{align*}
    Thus, for $j =1$,
    \begin{align*}
        \Phi^n_1(\mu  w_s + \lambda v_t; B_5)&= \omega_{n-1}s^{n-2}\mu\int_{t}^{+\infty}\frac{r}{\sqrt{r^2-s^2}}\d r \\
        &= \omega_{n-1}s^{n-2}\mu\int_{-\infty}^{-\sqrt{t^2-s^2}}\d x_n
     \end{align*}
     and for $2 \leq j \leq n-1$,
     \begin{align*}
         \Phi^n_j&(\mu  w_s + \lambda v_t; B_5)\\
         &= \frac{\omega_{n-1}}{n-1} \left[ \binom{n}{j}\frac{1}{n}\sum_{k=1}^{j-1}\binom{j}{k} (j-k) s^{n-k+1}\lambda^{j-k}\mu^k  \int_{t}^{+\infty}\frac{1}{r^{j-k+1}\sqrt{r^2-s^2} }\d r \right.\\
         &\left. +\binom{n-1}{j} \sum_{k=1}^{j}\binom{j}{k}s^{n-k-1}\lambda^{j-k}\mu^k  \int_{t}^{+\infty}\frac{1}{r^{j-k-1}\sqrt{r^2-s^2} }\d r\right] \\
         &= \frac{\omega_{n-1}}{n-1} \left[ \binom{n}{j}\frac{1}{n}\sum_{k=1}^{j-1}\binom{j}{k} (j-k) s^{n-k+1}\lambda^{j-k}\mu^k  \int_{-\infty}^{-\sqrt{t^2-s^2}}(x_n^2+s^2)^{\frac{k-j-2}{2}}\d x_n \right.\\
         &\left. +\binom{n-1}{j} \sum_{k=1}^{j}\binom{j}{k}s^{n-k-1}\lambda^{j-k}\mu^k  \int_{-\infty}^{-\sqrt{t^2-s^2}}(x_n^2+s^2)^{\frac{k-j}{2}}\d x_n\right].
     \end{align*}
     Lastly, for $j =n$, 
     \begin{align*}
          \Phi^n_{n}(\mu  w_s + \lambda v_t; B_5)&=\frac{\omega_{n-1}}{n-1}\sum_{k=1}^{n-1}\binom{n-1}{k}  s^{n-k+1}\lambda^{n-k}\mu^k  \int_{t}^{+\infty}\frac{1}{r^{n-k+1}\sqrt{r^2-s^2} }\d r\\
          &=\frac{\omega_{n-1}}{n-1}\sum_{k=1}^{n-1}\binom{n-1}{k}  s^{n-k+1}\lambda^{n-k}\mu^k  \int_{-\infty}^{-\sqrt{t^2-s^2}}(x_n^2+s^2)^{\frac{k-j-2}{2}}\d x_n.
     \end{align*}
    Since $\xi$ has bounded support, there exists $R>t$ such that $\xi(r)=0$ for $r\geq R$. Hence the integral over $B_5$ only depends on the restriction of
     \[
     \nu_j:=\Phi_j^n(\mu w_s+\lambda v_t;\cdot)|_{B_5}
     \]
   to the bounded set $B_5\cap \{|x|\leq R\}$, on which this measure is finite. Thus we may apply Theorem \ref{disintegration_theorem} to this finite restriction of $\nu_j$, and we keep the notation $\nu_j$ for simplicity. Now let
     \begin{align*}
         \pi: \R^n &\rightarrow \R\\
         x &\rightarrow \pi(x)= x_n,
     \end{align*}
     and set $\Sigma = \left(-\infty,-\sqrt{t^2-s^2}\right) $. Since,
     \[B_5 =  \{x \in \R^n : |x|_{n-1} = s, |x| >  t, x_n < 0\}, \]
     then for every $x_n \in \Sigma$, 
     \[B_5 \cap \pi^{-1}(x_n) = s\s^{n-2}\times \{x_n\}.\]
   By Theorem \ref{disintegration_theorem} applied to the finite measure $\nu_j$ and to the projection $\pi$, there exists a family of measures $(\nu_{j,x_n})_{x_n \in \Sigma}$ defined for $(\pi_{\#}\nu_j)$-almost every $x_n$ such that $\nu_{j,x_n}$ is concentrated on $B_5 \cap \pi^{-1}(x_n)$, and
     \begin{align*}
        \int_{B_5} \varphi(x)\d\Phi^n_j(\mu  w_s + \lambda v_t;x) = \int_{\Sigma}\int_{s\s^{n-2}} \d \eta(x_n) \varphi(x{'},x_n) \d \nu_{j,x_n}(x{'})\d(\pi_{\#}\nu_j)(x_n)
     \end{align*}
     for every bounded Borel function $\varphi$ supported in $\{|x|\le R\}$. Since the function $\mu  w_s + \lambda v_t$ is $\SO(n-1)$-invariant on $B_5 \cap \pi^{-1}(x_n)$ for every $x_n \in \Sigma$, the measure $\nu_{j,x_n}$ is $\SO(n-1)$-invariant on each fiber. Therefore, it must be a scalar multiple of the Hausdorff measure $\hm^{n-2}$ on $s\s^{n-2}$. This implies that there exist a function $K_{n,j}$ depending on $x_n$ such that 
     \begin{align}\label{disintegration_hessian_B_5}
         &\int_{B_5} \varphi(x)\d\Phi^n_j(\mu  w_s + \lambda v_t;x)=\int_{\Sigma}K_{n,j}(x_n)\int_{s\mathbb{S}^{n-2}}\varphi(x^{'},x_n) \d\hm ^{n-2}(x^{'})\pi_{\#}\nu_j(x_n),
    \end{align}
     and since for every generating Borel set of $\R$, which is given by $(-\infty,c)$ for some $c \in \R$, we have
     \begin{align*}
         (\pi_{\#}\nu_j)(Y) = \Phi^n_j(\mu  w_s + \lambda v_t;B_5 \cap \pi^{-1}(Y)).    \end{align*}
     Then, by computing the parallel set of the function $\mu  w_s + \lambda v_t$ at $B_5 \cap \pi^{-1}(Y)$, we have that for $j=1$,
     \begin{align*}
         \Phi^n_1(\mu  w_s + \lambda v_t;B_5 \cap \pi^{-1}(Y)) = &\omega_{n-1}s^{n-2}\mu\int_{t}^{+\infty}\mathds{1}_Y\left(-\sqrt{r^2-s^2}\right)\frac{r}{\sqrt{r^2-s^2}}\d r \\
         &=\omega_{n-1}s^{n-2}\mu\int_{-\infty}^{-\sqrt{t^2-s^2}}\mathds{1}_Y(x_n)\d x_n
     \end{align*}
     while for $2 \leq j \leq n-1$,
     \begin{align*}
          &\Phi^n_j(\mu  w_s + \lambda v_t;B_5 \cap \pi^{-1}(Y))=\frac{\omega_{n-1}}{n-1} \times \\
         & \left[ \binom{n}{j}\frac{1}{n}\sum_{k=1}^{j-1}\binom{j}{k} (j-k) s^{n-k+1}\lambda^{j-k}\mu^k  \int_{-\infty}^{-\sqrt{t^2-s^2}}\mathds{1}_Y(x_n)(x_n^2+s^2)^{\frac{k-j-2}{2}}\d x_n +\right.\\
         &\left. \binom{n-1}{j} \sum_{k=1}^{j}\binom{j}{k}s^{n-k-1}\lambda^{j-k}\mu^k   \int_{-\infty}^{-\sqrt{t^2-s^2}}\mathds{1}_Y(x_n)(x_n^2+s^2)^{\frac{k-j}{2}}\d x_n \right]
     \end{align*}
     and 
     \begin{align*}
          &\Phi^n_n(\mu  w_s + \lambda v_t;B_5 \cap \pi^{-1}(Y))=\\
          &\frac{\omega_{n-1}}{n-1}\sum_{k=1}^{n-1}\binom{n-1}{k}  s^{n-k+1}\lambda^{n-k}\mu^k  \int_{-\infty}^{-\sqrt{t^2-s^2}}\mathds{1}_Y(x_n)(x_n^2+s^2)^{\frac{k-n-2}{2}}\d x_n 
     \end{align*}
     when $j =n$. This implies that the measure $\pi_{\#}\nu_j$ is absolutely continuous with respect to the Lebesgue measure. After substituting for $\varphi = 1$ in (\ref{disintegration_hessian_B_5}) and comparing with the total measure of $\nu_j$, we get that $K_{n,j}(x_n) = \frac{s^{2-n}}{\omega_{n-1}}$ for every $x_n \in \R$. Thus, for $j = 1$, 
    \begin{align*}
         \int_{B_5} \xi(|x|)x_n\d\Phi^n_1(\mu  w_s + \lambda v_t;x) 
         &= \omega_{n-1} \mu  s^{n-2} \int_{\Sigma}\int_{s\mathbb{S}^{n-2}}\xi(|x|) x_n \d\hm ^{n-2}(x^{'})\d x_n \\
         & = \omega_{n-1} \mu  s^{n-2}\int_{-\infty}^{-\sqrt{t^2-s^2}} \xi\left(\sqrt{s^2+x_n^2}\right)x_n \d x_n\\
         &= -\omega_{n-1}\mu  s^{n-2}\int_{t}^{+\infty} r\xi(r) \d r 
    \end{align*}
    and for every $1 \leq j \leq n-1$,
    \begin{align*}
         \int_{B_5} &\xi(|x|)x_n\d\Phi^n_j(\mu  w_s + \lambda v_t;x) 
         \\=&-\frac{\omega_{n-1}}{n-1} \left[ \binom{n}{j}\frac{1}{n}\sum_{k=1}^{j-1}\binom{j}{k} (j-k) \lambda^{j-k}\mu^k s^{n-k+1}\int_{t}^{+\infty}r^{k-j-1}\xi(r)\d r  \right.\\
         &\left.+\binom{n-1}{j} \sum_{k=1}^{j}\binom{j}{k}\lambda^{j-k}\mu^k s^{n-k-1} \int_{t}^{+\infty}r^{k-j+1}\xi(r)\d r\right]
    \end{align*}
    whereas for $j =n$, we get, 
    \begin{align*}
        \int_{B_5} \xi(|x|)&x_n\d\Phi^n_n(\mu  w_s + \lambda v_t;x) 
         \\&=-\frac{\omega_{n-1}}{n-1} \sum_{k=1}^{n-1}\binom{n-1}{k}\lambda^{n-k}\mu^k  s^{n-k+1} \int_{t}^{+\infty} r^{k-n-1}\xi(r) \d r.
    \end{align*}
    \item For the sixth set of singular points given by $B_6=\{x \in \R^n : |x|=t, |x|_{n-1} > s, x_n < 0\}$, we have by (\ref{singular_point_addition}),
    \begin{align*}
        \partial (\mu \ w_s + \lambda \ v_t)(x)&=\partial (\mu \ w_s)(x)  + \partial( \lambda \ v_t)(x)\\
        &= \Bigl\{  \left(\mu \frac{x_1}{|x|_{n-1}},\dots,\mu \frac{x_{n-1}}{|x|_{n-1}},0\right) \Bigl\} + \Bigl\{  \alpha\frac{x}{|x|} : \alpha \in [0,\lambda] \Bigl\}
    \end{align*}
     for every $x \in B_6$  , and thus 
     \begin{align*}
         &P_p(\mu  w_s + \lambda v_t, B_6)\\&=
         \Bigl\{ x + p \mu \frac{(x - x_n e_n)}{|x|_{n-1}} + p \alpha \frac{x}{|x|}: |x|= t, |x|_{n-1} > s,  x_n < 0, \alpha \in [0,\lambda] \Bigl\}\\
         & =\Bigl\{ (t + p\alpha)(\tau e_n + \sqrt{1-\tau^2}v) + p\mu v: v \in \sn\cap e_n^{\perp}, \tau \in [-\sqrt{1-\frac{s^2}{t^2}},0], \alpha \in [0,\lambda]\Bigl\}
     \end{align*}
     for all $p \geq 0$. 
     
     Hence, the $n$-dimensional Hausdorff measure of the parallel set associated with $B_6$ is given by 
     \begin{align*}
         &\omega_{n-1}\int_{0}^{\lambda} \int_{-\sqrt{1-\frac{s^2}{t^2}}}^{0}p(t+p\alpha)^{n-1} \left(1 + \frac{p\mu}{(t + p\alpha)\sqrt{1-\tau^2}}\right)^{n-2} (1 - \tau^2)^{\frac{n-3}{2}} \d\tau \d\alpha=\\
         &\omega_{n-1}\int_{0}^{\lambda} \int_{-\sqrt{1-\frac{s^2}{t^2}}}^{0}p(t+p\alpha) \left(t+p\left(\alpha + \frac{\mu}{\sqrt{1-\tau^2}}\right)\right)^{n-2} (1 - \tau^2)^{\frac{n-3}{2}} \d\tau \d\alpha=\\
         &\omega_{n-1}\sum_{j=0}^{n-2}\binom{n-2}{j}\int_{0}^{\lambda} \int_{-\sqrt{1-\frac{s^2}{t^2}}}^{0} p(t+p\alpha)t^{n-j-2}p^j\left(\alpha + \frac{\mu}{\sqrt{1-\tau^2}}\right)^j(1-\tau^2)^{\frac{n-3}{2}}\d\tau \d\alpha \\
         & =\omega_{n-1}\sum_{j=0}^{n-2}\binom{n-2}{j}p^{j+1}t^{n-j-1}\sum_{k=0}^j\binom{j}{k}\mu^{k}\int_{0}^{\lambda}\alpha^{j-k}\d r \int_{-\sqrt{1-\frac{s^2}{t^2}}}^{0} (1 - \tau^2)^{\frac{n-k-3}{2}} \d\tau +\\
         &\omega_{n-1}\sum_{j=0}^{n-2}\binom{n-2}{j}p^{j+2}t^{n-j-2}\sum_{k=0}^j\binom{j}{k}\mu^{k}\int_{0}^{\lambda}\alpha^{j-k+1}\d r \int_{-\sqrt{1-\frac{s^2}{t^2}}}^{0} (1 - \tau^2)^{\frac{n-k-3}{2}} \d\tau= \\
         &\omega_{n-1}\sum_{j=0}^{n-2}\binom{n-2}{j}p^{j+1}t^{n-j-1}\sum_{k=0}^j\binom{j}{k}\frac{1}{j-k+1}\mu^{k}\lambda^{j-k+1} \int_{-\sqrt{1-\frac{s^2}{t^2}}}^{0} (1 - \tau^2)^{\frac{n-k-3}{2}} \d\tau +\\
         &\omega_{n-1}\sum_{j=0}^{n-2}\binom{n-2}{j}p^{j+2}t^{n-j-2}\sum_{k=0}^j\binom{j}{k}\frac{1}{j-k+2}\mu^{k}\lambda^{j-k+2} \int_{-\sqrt{1-\frac{s^2}{t^2}}}^{0} (1 - \tau^2)^{\frac{n-k-3}{2}} \d\tau.
     \end{align*}
     Thus, for $j=1$,
     \begin{align*}
          \Phi^n_1(\mu  w_s + \lambda v_t; B_6)=\omega_{n-1}\lambda\  t^{n-1}\int_{-\sqrt{1-\frac{s^2}{t^2}}}^{0} (1 - \tau^2)^{\frac{n-3}{2}} \d\tau
     \end{align*}
     and 
     \begin{align*}
         &\Phi^n_j(\mu  w_s + \lambda v_t; B_6)= \frac{\omega_{n-1}}{n-1}t^{n-j}\left[\binom{n-1}{j}\sum_{k=0}^{j-1}\binom{j}{k}\mu^k \lambda^{j-k}\int_{-\sqrt{1-\frac{s^2}{t^2}}}^{0} (1 - \tau^2)^{\frac{n-k-3}{2}} \d\tau \right.\\
         &\quad +\left.\frac{1}{n}\binom{n}{j}\sum_{k=0}^{j-2}\binom{j}{k}(j-k-1)\mu^k \lambda^{j-k}\int_{-\sqrt{1-\frac{s^2}{t^2}}}^{0} (1 - \tau^2)^{\frac{n-k-3}{2}} \d\tau\right]\\
         &=\frac{\omega_{n-1}}{n-1}\frac{1}{n}\binom{n}{j}t^{n-j}\sum_{k=0}^{j-1}\binom{j}{k}(n-k-1)\mu^k \lambda^{j-k}\int_{-\sqrt{1-\frac{s^2}{t^2}}}^{0} (1 - \tau^2)^{\frac{n-k-3}{2}} \d\tau 
     \end{align*}
     for every $2 \leq j \leq n-1$, whereas for $j=n$, we have,
     \begin{align*}
         \Phi^n_n(\mu  w_s + \lambda v_t; B_6)=\frac{\omega_{n-1}}{n-1}\frac{1}{n}\sum_{k=0}^{n-2}\binom{n}{k}(n-k-1)\mu^{k}\lambda^{n-k} \int_{-\sqrt{1-\frac{s^2}{t^2}}}^{0} (1 - \tau^2)^{\frac{n-k-3}{2}} \d\tau
     \end{align*}
     Hence for every $1 \leq j \leq n$, the measure
   
     \[\eta_j =\Phi^n_j(\mu  w_s + \lambda v_t;\cdot)|_{B_6}\]
    is a finite Borel measure. Now, let
     \begin{align*}
         \pi: \R^n &\rightarrow \R\\
         x &\rightarrow \pi(x)= x_n.
     \end{align*}
     Setting $\Sigma{'} = (-\sqrt{t^2-s^2},0)$, then for each $x_n \in \Sigma{'}$, we have
     \[B_6 \cap \pi^{-1}(x_n) = \sqrt{t^2-x_n^2}\s^{n-2}\]
    By Theorem \ref{disintegration_theorem} applied to $\eta_j$ and to the projection $\pi$, there exists a family of measures $(\eta_{j,{x_n}})_{x_n \in \Sigma{'}}$ defined for $(\pi_{\#}\eta_j)$-almost every $x_n$ such that $\eta_{j,x_n}$ is concentrated on $B_6 \cap \pi^{-1}(x_n)$, and
     \begin{align*}
        \int_{B_6}\varphi(x)&\d\Phi^n_j(\mu  w_s + \lambda v_t;x) 
        =
          \int_{\Sigma^{'}}\int_{\sqrt{t^2-x_n^2}\mathbb{S}^{n-2}} \varphi(x^{'},x_n)\d(\eta_{j,x_n})(x{'}) \d (\pi_{\#}\eta_j)(x_n)
     \end{align*}
     for every bounded Borel function $\varphi: \R^n \rightarrow \R$. 
     Since the function $\mu  w_s + \lambda v_t$ is $\SO(n-1)$ invariant on $\sqrt{t^2-x_n^2}\mathbb{S}^{n-2}$ then, for every $x_n \in \Sigma{'}$, the corresponding measure $\eta_{j,{x_n}}$ must be scalar multiple of $\hm^{n-2}$ on this sphere. This implies that there exist a function $L_{n,j}$ depending on $x_n$ such that 
     \begin{align}\label{disintegration_hessian_B_6}
         \int_{B_6} \varphi(x)\d\Phi^n_j(\mu  w_s + \lambda v_t;x)= \int_{\Sigma^{'}}L_{n,j}(x_n) \int_{\sqrt{t^2-x_n^2}\ \mathbb{S}^{n-2}}\varphi(x^{'},x_n) \d\hm ^{n-2}(x^{'})\d \pi_{\#}\eta_j(x_n),
    \end{align}
     and since for every generating Borel set of $\R$, which is given by $(-\infty,c)$ for some $c \in \R$, we have
     \begin{align*}
         (\pi_{\#}\eta_j)(Y) = \Phi^n_j(\mu  w_s + \lambda v_t;B_6 \cap \pi^{-1}(Y)),    
    \end{align*}
     then by computing the parallel set of the function $\mu  w_s + \lambda v_t$ at $B_6 \cap \pi^{-1}(Y)$, it follows that 
      \begin{align*}
         (\pi_{\#}\eta_1)(Y)&=\omega_{n-1}\lambda\  t^{n-1}\int_{-\sqrt{1-\frac{s^2}{t^2}}}^{0} \mathds{1}_Y(t \tau)(1 - \tau^2)^{\frac{n-3}{2}} \d\tau\\
          & = \omega_{n-1}\lambda\  t\int_{-\sqrt{t^2-s^2}}^{0} \mathds{1}_Y(x_n)(t^2 - x_n^2)^{\frac{n-3}{2}} \d x_n
     \end{align*}
     and, for every $2 \leq j \leq n-1$, the measure $\pi_{\#}\eta_j$ at $Y \subset \Sigma{'}$ is given by 
     \begin{align*}
        & \frac{\omega_{n-1}}{n-1}\frac{1}{n}\binom{n}{j}t^{n-j}\sum_{k=0}^{j-1}\binom{j}{k}(n-k-1)\mu^k \lambda^{j-k}\int_{-\sqrt{1-\frac{s^2}{t^2}}}^{0} \mathds{1}_Y(t\tau)(1 - \tau^2)^{\frac{n-k-3}{2}} \d\tau =\\
         &\frac{\omega_{n-1}}{n-1}\frac{1}{n}\binom{n}{j}t^{2-j}\sum_{k=0}^{j-1}\binom{j}{k}(n-k-1)\mu^k \lambda^{j-k}\int_{-\sqrt{t^2-s^2}}^{0}\mathds{1}_Y(x_n) (t^2 - x_n^2)^{\frac{n-k-3}{2}} \d x_n 
     \end{align*}
      whereas for $j=n$, we have that
     \begin{align*}
         (\pi_{\#}\eta_n)&(Y)=\frac{\omega_{n-1}}{n-1}\frac{1}{n}\sum_{k=0}^{n-2}\binom{n}{k}(n-k-1)\mu^{k}\lambda^{n-k} \int_{-\sqrt{1-\frac{s^2}{t^2}}}^{0} \mathds{1}_Y(t\tau) (1 - \tau^2)^{\frac{n-k-3}{2}} \d\tau\\
         &=\frac{\omega_{n-1}}{n-1}\frac{1}{n}\sum_{k=0}^{n-2}\binom{n}{k}(n-k-1)t^{2-n}\mu^{k}\lambda^{n-k}  \int_{-\sqrt{t^2-s^2}}^{0}\mathds{1}_Y(x_n) (t^2 - x_n^2)^{\frac{n-k-3}{2}} \d x_n
     \end{align*}
     This implies that the measure $\pi_{\#}\eta_j$ is absolutely continuous with respect to the Lebesgue measure. After substituting for $\varphi = 1$ in (\ref{disintegration_hessian_B_6}) and comparing with the total measure of $\Phi^n_j(\mu  w_s + \lambda v_t;\cdot)|_{B_6}$, we get that $L_{n,j}(x_n) = \frac{(t^2-x_n^2)^{\frac{2-n}{2}}}{\omega_{n-1}}$ for every $x_n \in \R$. Thus, for $j=1$, we have
    \begin{align*}
         \int_{B_6} \xi(|x|)x_n&\d\Phi^n_1(\mu  w_s + \lambda v_t;x) \\
         &=\omega_{n-1}\lambda t\int_{\Sigma^{'}}(t^2 - x_n^2)^{\frac{n-3}{2}}x_n L_{n,1}(x_n)\int_{\sqrt{t^2-x_n^2} \sn}\xi(|x|) \d\hm ^{n-2}(x^{'})\d x_n\\
         &=\omega_{n-1} \lambda t^{n} \xi(t) \int_{-\sqrt{1-\frac{s^2}{t^2}}}^{0} (1 - u^2)^{\frac{n-2}{2}}u \d u \\
         &= \frac{\omega_{n-1}}{n-1}\lambda\ \xi(t)\left[-t^{n}  +  t s^{n-1} \right]
    \end{align*}
    and for every $2 \leq j \leq n-1$, we have
    \begin{align*}
         \int_{B_6} &\xi(|x|)x_n\d\Phi^n_j(\mu  w_s + \lambda v_t;x) \\
         &=\frac{1}{n}\binom{n}{j}\frac{\omega_{n-1}}{n-1} t^{n-j+1} \xi(t)\sum_{k=0}^{j-1}\binom{j}{k}(n-k-1)\mu^k \lambda^{j-k} \int_{-\sqrt{1-\frac{s^2}{t^2}}}^{0} u(1 - u^2)^{\frac{n-k-3}{2}} \d u\\
         & =\frac{\omega_{n-1}}{n-1} t^{n-j+1} \xi(t)\frac{1}{n}\binom{n}{j}\sum_{k=0}^{j-1}\binom{j}{k}\mu^k \lambda^{j-k} \left( -1 + \frac{s^{n-k-1}}{t^{n-k-1}} \right), 
    \end{align*}
    and for $j =n$, we have
    \begin{align*}
         \int_{B_6} \xi(|x|)x_n&\d\Phi^n_n(\mu  w_s + \lambda v_t;x)\\
         & = \frac{\omega_{n-1}}{n-1}\frac{1}{n}\sum_{k=0}^{n-2}\binom{n}{k}(n-k-1)\mu^{k}\lambda^{n-k}  \xi(t) t \int_{-\sqrt{1-\frac{s^2}{t^2}}}^{0} u(1 - u^2)^{\frac{n-k-3}{2}} \d u \\
         &=- \frac{\omega_{n-1}}{n-1}\frac{1}{n}\sum_{k=0}^{n-2}\binom{n}{k}\mu^{k}\lambda^{n-k}\left[t -  s^{n-k-1}t^{k+2-n}\right]\xi(t).
    \end{align*}
    \item For the last set of singular points $B_7=\{x \in \R^n : |x|_{n-1} = s,|x|=t, x_n < 0\}$, we have by (\ref{singular_point_addition}) that 
    \begin{align*}
        \partial (\mu \ w_s + \lambda \ v_t)(x) =  \Bigl\{  \alpha \frac{x}{|x|}: \alpha \in [0,\lambda]\Bigl\} +  \Bigl\{  (\beta \frac{x_1}{|x|_{n-1}},\dots,\beta \frac{x_{n-1}}{|x|_{n-1}},0) : \beta \in [0,\mu]\Bigl\}
    \end{align*}
    for every $x \in B_7$. This implies that
    \begin{align*}
        &P_p(\mu  w_s + \lambda v_t, B_7)=\\
        &\Bigl\{ x + p\beta \frac{(x - x_n e_n)}{|x|_{n-1}} + p \alpha \frac{x}{|x|} :  |x|= t, |x|_{n-1} = s,  x_n < 0, \alpha \in [0,\lambda], \beta \in [0,\mu] \Bigl\}=\\
         &\Bigl\{ (t + p\alpha)(\tau e_n + \sqrt{1-\tau^2}v) + p\beta v: v \in \sn\cap e_n^{\perp}, \tau=-\sqrt{1-\frac{s^2}{t^2}}, \alpha \in [0,\lambda], \beta \in [0,\mu]\Bigl\}
    \end{align*}
   and thus, the $n$-dimensional Hausdorff measure of the parallel set of $B_7$ is given by
    \begin{align*}
         &\omega_{n-1}\int_{0}^{\lambda}\int_{0}^{\mu}p^{2}\sqrt{1-\frac{s^2}{t^2}}\left(s + p \left(\frac{\alpha s}{t} + \beta\right)\right)^{n-2}\d\beta\d\alpha= \\
         & \omega_{n-1}\sum_{j=0}^{n-2}\binom{n-2}{j}\int_{0}^{\lambda}\int_{0}^{\mu}p^{j+2}\sqrt{1-\frac{s^2}{t^2}}s^{n-j-2}\left(\frac{\alpha s}{t} + \beta\right)^{j}\d\beta\d\alpha= \\
         & \omega_{n-1}\sum_{j=0}^{n-2}p^{j+2}\binom{n-2}{j}\sqrt{t^2-s^2}\sum_{k=0}^j \binom{j}{k}\frac{s^{n-k-2}}{t^{j-k+1}}\int_{0}^{\lambda}\alpha^{j-k}\d\alpha\int_{0}^{\mu}\beta^{k}\d\beta=\\
         &\omega_{n-1}\sum_{j=0}^{n-2}p^{j+2}\binom{n-2}{j}\sqrt{t^2-s^2}\sum_{k=0}^j \binom{j}{k}\frac{s^{n-k-2}}{t^{j-k+1}}\frac{\lambda^{j-k+1}}{j-k+1}\frac{\mu^{k+1}}{k+1}
    \end{align*}
    This implies that 
    \begin{align*}
        \Phi^n_j(\mu  w_s + \lambda v_t; B_7)= \frac{\omega_{n-1}}{n-1}\frac{1}{n}\binom{n}{j} \sqrt{t^2-s^2}\sum_{k=0}^{j-2}\binom{j}{k+1} \frac{s^{n-k-2}}{t^{j-k-1}} \lambda^{j-k-1}\mu^{k+1}
    \end{align*}
    for every $2 \leq j \leq n$.
    Now since 
    \[B_7  =\{x \in \R^n: |x|_{n-1} = s, |x|= t, x_n< 0\} = s \s^{n-2} \times \{-\sqrt{t^2-s^2}\},\]
    the function $x \mapsto \xi(|x|)x_n$ is constant on $B_7$. Consequently,
    \begin{align*}
        \int_{B_7}\xi(|x|)&x_n\d\Phi^n_j(\mu  w_s + \lambda v_t; x) = -\sqrt{t^2-s^2}\xi(t)\Phi^n_j(\mu  w_s + \lambda v_t; B_7).
    \end{align*}
    Substituting the value of $\Phi^n_j(\mu  w_s + \lambda v_t; B_7)$ gives,
    \begin{align*}
        \int_{B_7}\xi(|x|)&x_n\d\Phi^n_j(\mu  w_s + \lambda v_t; x)\\
        &= 
         - \frac{\omega_{n-1}}{n-1}\frac{1}{n}\binom{n}{j} \xi(t)\sum_{k=0}^{j-2}\binom{j}{k+1} \lambda^{j-k-1}\mu^{k+1}  \left[\frac{s^{n-k-2}}{t^{j-k-3}} -\frac{s^{n-k}}{t^{j-k-1}} \right] \\
        &= - \frac{\omega_{n-1}}{n-1}\frac{1}{n}\binom{n}{j} \xi(t)\sum_{k=1}^{j-1}\binom{j}{k} \lambda^{j-k}\mu^{k}  \left[\frac{s^{n-k-1}}{t^{j-k-2}} -\frac{s^{n-k+1}}{t^{j-k}} \right]
    \end{align*}
    for every $2 \leq j \leq n$.
        \end{enumerate}
   Together with the region
   \[Z_{s<t}=\{x_n>0,\ |x|<s\}\cup \{x_n<0,\ |x|_{n-1}<s,\ |x|<t\}
   \]
   on which $\mu w_s + \lambda v_t$ is is identically zero, and with the hyperplane $H=\{x_n=0\}$, the sets
   \[A_1,\ldots,A_5,\quad B_1,\ldots,B_7,\quad Z_{s<t},\quad H
   \]
   form a disjoint partition of $\R^n$. The sets $Z_{s<t}$ and $H$ do not contribute to the integral. Therefore, collecting the preceding contributions by grouping the terms according to their powers in $\lambda$ and $\mu$. The terms containing only  $\lambda^j$ cancel. The only remaining term containing only $\mu^j$ is
   \[\frac{\omega_{n-1}}{n-1}\frac{1}{n}\binom{n}{j}\mu^j s^{n-j+1}\xi(s).\]
    For $1\leq k\leq j-1$, the terms containing $\lambda^{j-k}\mu^k$ combine into
    \[\frac{\omega_{n-1}}{n-1}\frac{1}{n}\binom{j}{k}\lambda^{j-k}\mu^k s^{n-k+1}\left(t^{k-j}\xi(t)-
    (j-k)\int_t^\infty r^{k-j-1}\xi(r)\,\d r\right).
    \]
    By the definition of the inverse $\cR$-transform, the expression between parentheses is
    \[\cR^{k-j}\xi(t). \]
    Thus, for $s < t$, we obtain
    \[
    \begin{aligned}
    \int_{\R^n}\xi(|x|)x_n
    \d\Phi_j^n(\mu w_s+\lambda v_t;x)
    &=\frac{\omega_{n-1}}{n-1}\frac1n\binom{n}{j}
    \left[\mu^j s^{n-j+1}\xi(s)\right.\\
    &\qquad\left. + \sum_{k=1}^{j-1}\binom{j}{k}\lambda^{j-k}\mu^k s^{n-k+1}
    \cR^{k-j}\xi(t)
    \right].
    \end{aligned}
    \]
    \item For $t <s$, we divide the domain $\R^n$ into the open $C^2$-regions $C_1,\dots,C_4$ and the singular regions $D_1,\dots,D_4$. In this case, the spherical level \(|x|=t\) lies before the level \(|x|=s\), and the mixed terms are therefore collected at the radius \(s=\max\{s,t\}\). 
    \begin{enumerate}
       \item The function $\mu  w_s + \lambda v_t$ is of class $C^2$ on the set $C_1= \{x \in \R^n:  t < |x| <s, x_n >0\}$, and its Hessian matrix at a point $x$ in this set has $(n-1)$-eigenvalues equal to $\lambda/|x|$ and the last eigenvalue is equal to zero. Hence,
        $$[\Hess  (\mu  w_s + \lambda v_t)(x)]_j=\begin{cases}
            \binom{n-1}{j}\frac{\lambda^j}{|x|^j}  & \text{if}\ 0 \leq j \leq n-1 \\
            0 & \text{if}\ j=n 
        \end{cases}$$
        and for $1 \leq j \leq n-1$,
        \begin{align*}
                \int_{C_1}\xi(|x|)x_n[\Hess (\mu  w_s + \lambda v_t)(x)]_j\d x &=\lambda^j\binom{n-1}{j}\int_{t}^{s} r^{n-j} \xi(r) \d r \int_{\sn \cap {y_n >0}}y_n \d y\\
                &=\lambda^j\binom{n-1}{j}  \frac{\omega_{n-1}}{n-1} \int_{t}^{s} r^{n-j} \xi(r) \d r.
                \end{align*}    
        \item The function $\mu  w_s + \lambda v_t$ is of class $C^2$ on the set $C_2= \{x \in \R^n :  |x| > s, x_n > 0 \}$, and its Hessian matrix at a point $x$ on $C_2$ has $(n-1)$-eigenvalues equal to $(\mu+\lambda)/|x|$ and the last eigenvalue is equal to zero. Hence,
        $$[\Hess  (\mu  w_s + \lambda v_t)(x)]_j=\begin{cases}
            \binom{n-1}{j}\frac{(\mu+\lambda)^j}{|x|^j}  & \text{if}\ 0 \leq j \leq n-1 \\
            0 & \text{if}\ j=n 
        \end{cases}$$
         and for $1 \leq j\leq n-1$,
            \begin{align*}
                \int_{C_2}\xi(|x|)&x_n[\Hess (\mu  w_s + \lambda v_t)(x)]_j\d x\\
                &=(\mu+\lambda)^j\binom{n-1}{j}\int_{s}^{\infty} r^{n-j} \xi(r) \d r \int_{\sn \cap {y_n > 0}}y_n \d y \\
                &=(\mu+\lambda)^j\binom{n-1}{j} \frac{\omega_{n-1}}{n-1} \int_{s}^{\infty} r^{n-j} \xi(r) \d r. 
            \end{align*}
        \item The function $\mu  w_s + \lambda v_t $ is also of class $C^2$ on $ C_3 = \{x \in \R^n :  |x|_{n-1} < s, |x| > t, x_n < 0\}$, and its Hessian matrix at a point $x \in C_3$ has $(n-1)$-eigenvalues equal to $\lambda/|x|$ and the last eigenvalue is equal to zero. Hence,
        $$[\Hess  (\mu  w_s + \lambda v_t)(x)]_j=\begin{cases}
            \binom{n-1}{j}\frac{\lambda^j}{|x|^j}  & \text{if}\ 0 \leq j \leq n-1 \\
            0 & \text{if}\ j=n 
        \end{cases}$$
         and for $ 1 \leq j\leq n-1$, if we denote by $C_3^1 = \{x \in \R^n: t < |x| < s, x_n <0\}$ and $C_3^2= \{x \in \R^n : |x| > s, |x|_{n-1} < s, x_n < 0\}$, then $C_3^1 \cup C_3^2 = C_3$ and
            \begin{align*}
                \int_{C_3}&\xi(|x|)x_n[\Hess (\mu  w_s + \lambda v_t)(x)]_j\d x \\&= \lambda^j \binom{n-1}{j}\left[ \int_{C_3^1} \xi(|x|)\frac{x_n}{|x|^j}\d x + \int_{C_3^2}\xi(|x|) \frac{x_n}{|x|^j}\d x\right]\\
                &=  \lambda^j \binom{n-1}{j}\left[\int_{t}^{s} r^{n-j}\xi(r)\int_{\sn\cap e_n^{\perp}} \int_{-1}^{0}\tau(1-\tau^2)^{\frac{n-3}{2}} \d\tau\d y\d r\right. \\
                &+ \left. \int_{s}^{+\infty} r^{n-j}\xi(r)\int_{\sn\cap e_n^{\perp}} \int_{-1}^{-\sqrt{1-\frac{s^2}{r^2}}}\tau(1-\tau^2)^{\frac{n-3}{2}} \d\tau\d y\d r \right]\\
                &= -\frac{\omega_{n-1}}{n-1}\lambda^j \binom{n-1}{j}\left[ \int_{t}^{s} r^{n-j}\xi(r)\d r + s^{n-1} \int_{s}^{+\infty} r^{1-j}\xi(r)\d r \right].
            \end{align*}
             \item The function $\mu  w_s + \lambda v_t $ is again of class $C^2$ on $C_4= \{x \in \R^n : |x|_{n-1} > s, x_n <0  \}$, and the eigenvalues of its Hessian matrix at a point $x$ in this set are given by  $$[\Hess (\mu  w_s + \lambda v_t)(x)]_j= \begin{cases}
                 \binom{n-2}{j}\left(\frac{\lambda}{|x|} + \frac{\mu}{|x|_{n-1}}\right)^j \\
                 + & \text{if}\  1 \leq j \leq n-2 \\
                 \binom{n-2}{j-1}\left(\frac{\lambda}{|x|} + \frac{\mu}{|x|_{n-1}}\right)^{j-1}\frac{\lambda}{|x|} \\
                 \\
                \left(\frac{\lambda}{|x|} + \frac{\mu}{|x|_{n-1}}\right)^{n-2}\frac{\lambda}{|x|} & \text{if}\  j=n-1 \\
                \\
                0 & \text{if}\  j=n
             \end{cases}$$
           This implies, for $1 \leq j \leq n-2$, that
            \begin{align*}
                &\int_{C_4}\xi(|x|)x_n[\Hess (\mu  w_s + \lambda v_t)(x)]_j\d x \\
                &=\omega_{n-1}\left[\binom{n-2}{j}\sum_{k=0}^{j}\binom{j}{k}\mu^k \lambda^{j-k}\int_{s}^{+\infty}r^{n-j}\xi(r) \int_{-\sqrt{1-\frac{s^2}{r^2}}}^{0} \tau (1-\tau^2)^{\frac{n-k-3}{2}} \d\tau \d r \right. \\
                &\left. +\binom{n-2}{j-1}\sum_{k=0}^{j-1}\binom{j-1}{k}\mu^k \lambda^{j-k}\int_{s}^{+\infty}r^{n-j}\xi(r) \int_{-\sqrt{1-\frac{s^2}{r^2}}}^{0} \tau (1-\tau^2)^{\frac{n-k-3}{2}} \d\tau \d r   \right] \\
                & =-\omega_{n-1} \times \\
                &\left[\binom{n-2}{j}\sum_{k=0}^{j}\binom{j}{k}\frac{\mu^k \lambda^{j-k}}{n-k-1}\left(\int_{s}^{+\infty}r^{n-j}\xi(r)\d r - s^{n-k-1} \int_{s}^{+\infty}r^{k-j+1}\xi(r)\d r \right) + \right.\\& \left. \binom{n-2}{j-1}\sum_{k=0}^{j-1}\binom{j-1}{k}\frac{\mu^k \lambda^{j-k}}{n-k-1} \left(\int_{s}^{+\infty}r^{n-j}\xi(r)\d r - s^{n-k-1} \int_{s}^{+\infty}r^{k-j+1}\xi(r)\d r \right) \right]  \\
                &= - \frac{\omega_{n-1}}{n-1}\binom{n-1}{j}\times \\&\left[\sum_{k=0}^{j}\binom{j}{k}\mu^k \lambda^{j-k}\left(\int_{s}^{+\infty}r^{n-j}\xi(r)\d r - s^{n-k-1} \int_{s}^{+\infty}r^{k-j+1}\xi(r)\d r \right) \right],
            \end{align*}
        and for $j=n-1$, we have 
              \begin{align*}
                \int_{C_4}&\xi(|x|)x_n[\Hess (\mu  w_s + \lambda v_t)(x)]_j\d x \\
                &= \omega_{n-1}\sum_{k=0}^{n-2}\binom{n-2}{k}\mu^k \lambda^{n-k-1}\int_{s}^{+\infty}r\xi(r) \int_{-\sqrt{1-\frac{s^2}{r^2}}}^{0} \tau (1-\tau^2)^{\frac{n-k-3}{2}} \d\tau \d r \\
                &= -\omega_{n-1}\sum_{k=0}^{n-2}\binom{n-2}{k}\frac{\mu^k \lambda^{n-k-1}}{n-k-1}\left(\int_{s}^{+\infty}r\xi(r)\d r - s^{n-k-1} \int_{s}^{+\infty}r^{k-n+2}\xi(r)\d r \right)\\
                &=-\frac{\omega_{n-1}}{n-1}\sum_{k=0}^{n-2}\binom{n-1}{k}\mu^k \lambda^{n-k-1}\left(\int_{s}^{+\infty}r\xi(r)\d r - s^{n-k-1} \int_{s}^{+\infty}r^{k-n+2}\xi(r)\d r \right).
            \end{align*}
          \item For the first set of singular points $D_1= \{x \in \R^n: |x| = t, x_n > 0\}$, set $V_1 = \{x\in\R^n: |x|<s, x_n>0\}$. Then $V_1$ is open and $D_1 \subset V_1$. On $V_1$ we have,
          \[\mu w_s+\lambda v_t=\lambda v_t.
          \]
          Let $E \subset D_1$ be a Borel set. Since $D_1 \subset V_1$, we have $E \subset V_1$. Hence, by the local determination property of Hessian measures (\ref{locally_determined}) on the open set $V_1$,
          \[\Phi_j^n(\mu w_s+\lambda v_t;E)=\Phi_j^n(\lambda v_t;E).
          \]
          Thus,
          \[\Phi^n_j(\mu  w_s + \lambda v_t;\cdot)|_{D_1} = \Phi^n_j(\lambda v_t;\cdot)|_{D_1}\]
          Since $v_t$ is radially symmetric, we have that $\Phi^n_j( v_t,\cdot)|_{D_1} $ is rotation invariant; this implies that it is proportional to $\hm^{n-1}$ restricted to $D_1$. Hence, there exists a constant $C_{n,j,t}$ such that,
            \begin{align*}
                \int_{D_1} \gamma(x) \d\Phi^n_j(v_t;x)= C_{n,j,t} \int_{D_1}\gamma(x)\d\hm ^{n-1}(x)
             \end{align*}
       for every $\gamma \in C_b(\R^n)$. It follows that,
            \begin{align*}
                 \Phi^n_j(v_t;D_1)&= C_{n,j,t}\int_{D_1}\d\hm ^{n-1}(x) \\
                &=\frac{\omega_{n-1}}{2} C_{n,j,t} t^{n-1}, 
            \end{align*}
        and by a similar proof of Lemma \ref{retrieving_densities_for_volumes}, we have
             \begin{align*}
                  \Phi^n_j(v_t;D_1)= \frac{\kappa_{n}}{2}\binom{n}{j}t^{n-j}
             \end{align*}
         and hence
             \begin{align*}
                  C_{n,j,t}=\frac{1}{n}\binom{n}{j}t^{1-j}.
                 \end{align*}
        This implies that
            \begin{align*}
                \int_{D_1}\xi(|x|)x_n\d\Phi^n_j(\mu  w_s + \lambda v_t;x)&= \int_{D_1}\xi(|x|)x_n\d\Phi^n_j(\lambda v_t;x)\\ 
                &= \lambda^j\frac{1}{n}\binom{n}{j}t^{1-j}\int_{D_1} \xi(|x|)x_n \d\hm ^{n-1}(x)\\
                &=\lambda^j\frac{1}{n}\binom{n}{j}t^{n-j+1}\xi(t)\int_{\sn \cap \{x_n > 0\}}x_n \d\hm ^{n-1}(x)\\
                &=\lambda^j\frac{1}{n}\binom{n}{j}\frac{\omega_{n-1}}{n-1}t^{n-j+1}\xi(t).
            \end{align*}
        \item For the second set of singular points $D_2= \{x \in \R^n: |x|= s, x_n > 0\}$. Since $\mu  w_s + \lambda v_t$ is radial in a neighborhood of $D_2$, by local determination (\ref{locally_determined}), the restriction $\Phi^n_j(\mu  w_s + \lambda v_t;\cdot)|_{D_2} $ is proportional to $\hm^{n-1}|_{D_2}$. This implies that there exists a constant $C_{n,j,s}$ such that,
            \begin{align*}
                \int_{D_2} \gamma(x) \d\Phi^n_j(\mu  w_s + \lambda v_t;x)= C_{n,j,s} \int_{D_2}\gamma(x)\d\hm ^{n-1}(x)
            \end{align*}
        for every $\gamma \in C_b(\R^n)$. While,
            \begin{align*}
                \Phi^n_j(\mu w_s + \lambda v_t;D_2)&= C_{n,j,s}\int_{D_2}\d\hm ^{n-1}(x) \\
                &=\frac{\omega_n}{2}C_{n,j,s} s^{n-1}. 
            \end{align*}
        And on the other hand, since
        \begin{align*}
        P_p(\mu w_s + \lambda v_t, D_2) =  \Bigl\{ (s  + \alpha)x : \alpha \in [\lambda p,(\mu+\lambda)p], x\in \sn, x_n > 0\Bigl\},
        \end{align*}
         we have
        \begin{align*}
        \Phi^n_j(\mu  w_s + \lambda v_t;D_2)&= \binom{n}{j}\frac{\kappa_{n}}{2} s^{n-j} \sum_{k=1}^{j}\binom{j}{k}\lambda^{j-k}\mu^k
        \end{align*}
        which implies that 
        \begin{align*}
        C_{n,j,s}=\frac{1}{n}\binom{n}{j}s^{1-j} \sum_{k=1}^{j}\binom{j}{k}\lambda^{j-k}\mu^k,
        \end{align*}
        and thus
            \begin{align*}
                \int_{D_2}&\xi(|x|)x_n\d\Phi^n_j(\mu  w_s + \lambda v_t;x)\\
                &=\frac{1}{n}\binom{n}{j}s^{1-j} \sum_{k=1}^{j}\binom{j}{k}\lambda^{j-k}\mu^k \int_{D_2} \xi(|x|)x_n \d\hm ^{n-1}(x)\\
                &=\frac{1}{n}\binom{n}{j}s^{n-j+1}\xi(s) \sum_{k=1}^{j}\binom{j}{k}\lambda^{j-k}\mu^k\int_{\sn \cap \{x_n > 0\}}x_n \d\hm ^{n-1}(x)\\
                &=\frac{1}{n} \frac{\omega_{n-1}}{n-1} s^{n-j+1}\xi(s) \sum_{k=1}^{j}\binom{j}{k}\lambda^{j-k}\mu^k.
            \end{align*}
        \item For the third set of singular points $D_3 = \{x \in \R^n : |x|=t, x_n < 0\}$, set $V_3= \{x\in\R^n: |x|_{n-1}<s, x_n<0\}$. Then $V_3$ is open and $D_3 \subset V_3$, because $t < s$. On $V_3$, we have
        \[\mu w_s + \lambda v_t = \lambda v_t.\]
        Therefore, by the local determination property (\ref{locally_determined}) and $j$-homogeneity,
        \[\Phi^n_j(\mu  w_s + \lambda v_t;\cdot)|_{D_3} =\Phi^n_j(\lambda v_t;\cdot)|_{D_3} = \lambda^j\Phi^n_j(v_t;\cdot)|_{D_3}.\]
        Since $v_t$ is radially symmetric, this implies that $\Phi^n_j(v_t;\cdot)$ is rotation invariant on this set. Hence, it is proportional to $\hm^{n-1}|_{D_3}$. This impliex that there exists a constant $K_{n,j,t}$ such that,
            \begin{align*}
                \int_{D_3} \gamma(x) \d\Phi^n_j(v_t;x)= K_{n,j,t} \int_{D_3}\gamma(x)\d\hm ^{n-1}(x)
            \end{align*}
         for every $\gamma \in C_b(\R^n)$. It follows that
            \begin{align*}
                \Phi^n_j(v_t;D_3 )&= K_{n,j,t}\int_{D_3}\d\hm ^{n-1}(x) \\
                &=\frac{\omega_n}{2}K_{n,j,t}t^{n-1} 
            \end{align*}
         and by Lemma \ref{retrieving_densities_for_volumes}, we have
            \begin{align*}
                \Phi^n_j(v_t;D_3)=  \frac{\kappa_{n}}{2}\binom{n}{j}t^{n-j}
            \end{align*}
        and hence 
            \begin{align*}
               K_{n,j,t} = \frac{1}{n}\binom{n}{j}t^{1-j}
            \end{align*}
        which implies that
            \begin{align*}
                \int_{D_3}\xi(|x|)x_n\d\Phi^n_j(\mu  w_s + \lambda v_t;x)
                &= \frac{1}{n}\binom{n}{j} \lambda^j 
                t^{1-j} \int_{D_3} \xi(|x|)x_n \d\hm ^{n-1}(x) \\
                &=  \frac{1}{n}\binom{n}{j}\lambda^j t^{n-j+1}\xi(t)\int_{\sn \cap \{x_n < 0\}} x_n \d\hm ^{n-1}(x)\\
                 & = -\frac{1}{n}\frac{\omega_{n-1}}{n-1} \binom{n}{j} \lambda^j t^{n-j+1} \xi(t).
            \end{align*}
        \item For the fourth set of singular points $D_4= \{x \in\R^n: |x|_{n-1} = s, x_n < 0\}$, by (\ref{singular_point_addition}), we have
        \begin{align*}
            \partial( \mu w_s + \lambda v_t)(x) &= \partial (\mu w_s)  + \partial(\lambda v_t)\\
            &=  \Bigl\{  \left(\alpha \frac{x_1}{|x|_{n-1}},\dots,\alpha \frac{x_{n-1}}{|x|_{n-1}},0\right): \alpha \in [0,\mu]\Bigl\} + \lambda \frac{x}{|x|}
        \end{align*}
           for every $ x \in D_4$ and this implies that
     \begin{align*}
         &P_p(\mu  w_s + \lambda v_t,D_4)=\\& \Bigl\{ x + p \alpha \frac{(x - x_n e_n)}{|x|_{n-1}} + p \lambda \frac{x}{|x|}: |x|_{n-1} = s,   x_n = - \sqrt{|x|^2 - s^2}, \alpha \in [0,\mu] \Bigl\}=\\
         &\Bigl\{ (r + p\lambda)(\tau e_n + \sqrt{1-\tau^2}v) + p \alpha v : v \in \sn\cap e_n^{\perp}, r > s,  \tau = -\frac{1}{r}\sqrt{r^2-s^2}, \alpha \in [0,\mu]\Bigl\}\\
         & =\Bigl\{ -\left(1 + p\frac{\lambda}{r}\right)\sqrt{r^2-s^2} e_n + \left(s + p \left(\frac{\lambda s}{r} + \alpha\right)\right)v : v \in \sn\cap e_n^{\perp},  r > s,  \alpha \in [0,\mu]\Bigl\}
     \end{align*}
     for all $p \geq 0$. Hence, the $n$-dimensional Hausdorff measure of the parallel set $D_4$ is given by 
        \begin{align*}
            &\omega_{n-1}\int_{s}^{+\infty}\int_{0}^{\mu} \left[p\lambda\frac{s^2}{r^2\sqrt{r^2-s^2}} + \frac{r}{\sqrt{r^2-s^2}}\right] p \left(s + p \left(\frac{\lambda s}{r} + \alpha\right)\right)^{n-2} \d\alpha\d r\\
            &=\frac{\omega_{n-1}}{n-1}\int_{s}^{+\infty}\left[p\lambda\frac{s^2}{r^2\sqrt{r^2-s^2}} + \frac{r}{\sqrt{r^2-s^2}}\right]\times \\
            &\left[ \left(s + p \left(\frac{\lambda s}{r} + \mu\right)\right)^{n-1} - \left(s + p\frac{\lambda s}{r} \right)^{n-1} \right]\d r\\
            &=\frac{\omega_{n-1}}{n-1}\sum_{j=1}^{n-1}p^{j} \binom{n-1}{j}\sum_{k=1}^{j}\binom{j}{k}s^{n-k-1}  \int_{s}^{+\infty}\left[p\lambda\frac{s^2}{r^2\sqrt{r^2-s^2}} + \frac{r}{\sqrt{r^2-s^2}}\right]\frac{\lambda^{j-k}}{r^{j-k}}\mu^{k} \d r\\
            &=\frac{\omega_{n-1}}{n-1}\sum_{j=1}^{n-1}p^{j+1}\binom{n-1}{j}\sum_{k=1}^{j}\binom{j}{k}  s^{n-k+1}\lambda^{j-k+1}\mu^k  \int_{s}^{+\infty}\frac{1}{r^{j-k+2}\sqrt{r^2-s^2} }\d r   \\
            & \quad +\frac{\omega_{n-1}}{n-1}\sum_{j=1}^{n-1} p^{j} \binom{n-1}{j} \sum_{k=1}^{j}\binom{j}{k}s^{n-k-1}\lambda^{j-k}\mu^k  \int_{s}^{+\infty}\frac{1}{r^{j-k-1}\sqrt{r^2-s^2} }\d r. 
        \end{align*}
     Thus, for $j =1$ we have
    \begin{align*}
        \Phi^n_1(\mu  w_s + \lambda v_t; D_4)&= \omega_{n-1}s^{n-2}\mu\int_{s}^{+\infty}\frac{r}{\sqrt{r^2-s^2}}\d r\\
        &= \omega_{n-1}s^{n-2}\mu\int_{-\infty}^{0}\d x_n
     \end{align*}
     and 
     \begin{align*}
         &\Phi^n_j(\mu  w_s + \lambda v_t; D_4)\\&= \frac{\omega_{n-1}}{n-1} \left[ \binom{n}{j}\frac{1}{n}\sum_{k=1}^{j-1}\binom{j}{k} (j-k) s^{n-k+1}\lambda^{j-k}\mu^k  \int_{s}^{+\infty}\frac{1}{r^{j-k+1}\sqrt{r^2-s^2} }\d r \right.\\
         &\left. \quad +\binom{n-1}{j} \sum_{k=1}^{j}\binom{j}{k}s^{n-k-1}\lambda^{j-k}\mu^k  \int_{s}^{+\infty}\frac{1}{r^{j-k-1}\sqrt{r^2-s^2} }\d r\right]\\
          &= \frac{\omega_{n-1}}{n-1} \left[ \binom{n}{j}\frac{1}{n}\sum_{k=1}^{j-1}\binom{j}{k} (j-k) s^{n-k+1}\lambda^{j-k}\mu^k  \int_{-\infty}^{0}(x_n^2+s^2)^{\frac{k-j-2}{2}}\d x_n \right.\\
         &\left. \quad+\binom{n-1}{j} \sum_{k=1}^{j}\binom{j}{k}s^{n-k-1}\lambda^{j-k}\mu^k  \int_{-\infty}^{0}(x_n^2+s^2)^{\frac{k-j}{2}}\d x_n\right]
     \end{align*}
     for every $2 \leq j \leq n-1$, and for $j =n$
     \begin{align*}
          \Phi^n_{n}(\mu  w_s + \lambda v_t; D_4)&=\frac{\omega_{n-1}}{n-1}\sum_{k=1}^{n-1}\binom{n-1}{k}  s^{n-k+1}\lambda^{n-k}\mu^k  \int_{s}^{+\infty}\frac{1}{r^{n-k+1}\sqrt{r^2-s^2} }\d r\\
          &=\frac{\omega_{n-1}}{n-1}\sum_{k=1}^{n-1}\binom{n-1}{k}  s^{n-k+1}\lambda^{n-k}\mu^k  \int_{-\infty}^{0}(x_n^2+s^2)^{\frac{k-n-2}{2}}\d x_n
     \end{align*}
     Since $\xi$ has bounded support, there exists $R>s$ such that $\xi(r)=0$ for $r\ge R$. Hence the integral over $D_4$ only depends on the restriction of
     \[\sigma_j =\Phi^n_j(\mu  w_s + \lambda v_t;\cdot)|_{D_4} \]
    to the bounded set $D_4\cap\{|x|\le R\}$, on which this measure is finite. Thus we may apply Theorem \ref{disintegration_theorem} to this finite restriction of $\sigma_j$, and we keep the notation $\sigma_j$ for simplicity. Let,
     \begin{align*}
         \pi: \R^n &\rightarrow \R\\
         x &\rightarrow \pi(x)= x_n.
     \end{align*}
     Setting $\Sigma{''} = (-\infty,0)$, then for each $x_n \in \Sigma{''}$, we have
     \[D_4 \cap \pi^{-1}(x_n) = s\s^{n-2}\]
    By Theorem \ref{disintegration_theorem} applied to $\sigma_j$ and to the projection $\pi$, there exists a family of measures $(\sigma_{j,{x_n}})_{x_n \in \Sigma{''}}$ defined for $(\pi_{\#}\sigma_j)$-almost every $x_n$ such that $\sigma_{j,x_n}$ is concentrated on $D_4 \cap \pi^{-1}(x_n)$. Since the function $\mu  w_s + \lambda v_t$ is $\SO(n-1)$ invariant on $s\mathbb{S}^{n-2}$ then, for every $x_n \in \Sigma{''}$, the corresponding measure $\sigma_{j,{x_n}}$ must be scalar multiple of $\hm^{n-2}$ on this sphere. This implies that there exists a function $N_{n,j}$ depending on $x_n$ such that 
     \begin{align}\label{disintegration_hessian_D}
         &\int_{D_4} \varphi(x)\d\Phi^n_j(\mu  w_s + \lambda v_t;x)= \notag \\
         &\int_{\Sigma^{''}} \d  (\pi_{\#}\sigma_j)(x_n)\int_{s\mathbb{S}^{n-2}}N_{n,j}(x_n)\varphi(x^{'},x_n) \d\hm ^{n-2}(x^{'})
    \end{align}
    for every bounded Borel function $\varphi: \R^n \rightarrow \R$, and since for every generating Borel set of $\R$ which is given by $(-\infty,c)$ for some $c \in \R$, we have
     \begin{align*}
         (\pi_{\#}\sigma_j)(Y) = \Phi^n_j(\mu  w_s + \lambda v_t;D_4 \cap \pi^{-1}(Y)),   
    \end{align*}
     then by computing the parallel set of the function $\mu  w_s + \lambda v_t$ at $D_4 \cap \pi^{-1}(Y)$, we retrieve the following 
     \begin{align*}
         \Phi^n_1(\mu  w_s + \lambda v_t;D_4 \cap \pi^{-1}(Y)) = &\omega_{n-1}s^{n-2}\mu\int_{s}^{+\infty}\mathds{1}_Y\left(-\sqrt{r^2-s^2}\right)\frac{r}{\sqrt{r^2-s^2}}\d r \\
         &=\omega_{n-1}s^{n-2}\mu\int_{-\infty}^{0}\mathds{1}_Y(x_n)\d x_n
     \end{align*}
     for $j=1$, and
     \begin{align*}
          &\Phi^n_j(\mu  w_s + \lambda v_t;D_4 \cap \pi^{-1}(Y))=\frac{\omega_{n-1}}{n-1} \times \\
         & \left[ \binom{n}{j}\frac{1}{n}\sum_{k=1}^{j-1}\binom{j}{k} (j-k) s^{n-k+1}\lambda^{j-k}\mu^k  \int_{-\infty}^{0}\mathds{1}_Y(x_n)(x_n^2+s^2)^{\frac{k-j-2}{2}}\d x_n +\right.\\
         &\left. \binom{n-1}{j} \sum_{k=1}^{j}\binom{j}{k}s^{n-k-1}\lambda^{j-k}\mu^k   \int_{-\infty}^{0}\mathds{1}_Y(x_n)(x_n^2+s^2)^{\frac{k-j}{2}}\d x_n \right]
     \end{align*}
     for every $2 \leq j \leq n-1$ and 
     \begin{align*}
          &\Phi^n_n(\mu  w_s + \lambda v_t;D_4 \cap \pi^{-1}(Y))=\\
          &\frac{\omega_{n-1}}{n-1}\sum_{k=1}^{n-1}\binom{n-1}{k}  s^{n-k+1}\lambda^{n-k}\mu^k  \int_{-\infty}^{0}\mathds{1}_Y(x_n)(x_n^2+s^2)^{\frac{k-n-2}{2}}\d x_n 
     \end{align*}
     when $j =n$. This implies that the measure $\pi_{\#}\sigma_j$ is absolutely continuous with respect to the Lebesgue measure. After substituting for $\varphi = 1$ in (\ref{disintegration_hessian_D}) and comparing with the total measure of $\Phi^n_j(\mu  w_s + \lambda v_t;\cdot) \llcorner D_4$, we get that $N_{n,j}(x_n) = \frac{s^{2-n}}{\omega_{n-1}}$ for every $x_n \in \R$. Thus, for $j = 1$ 
    \begin{align*}
         \int_{D_4} \xi(|x|)&x_n\d\Phi^n_1(\mu  w_s + \lambda v_t;x)
         \\&= \omega_{n-1}s^{n-2}\mu \int_{-\infty}^{0}\int_{s\s^{n-2}}\xi(|x|) N_{n,1}(x_n) x_n\d\hm ^{n-2}(x^{'})\d\hm ^1(x_n)\\
         & = \omega_{n-1}s^{n-2}  \mu  \int_{-\infty}^{0} \xi\left(\sqrt{s^2+x_n^2}\right)x_n \d\hm ^1(x_n) \\
         &= -\omega_{n-1} s^{n-2} \mu \int_{s}^{+\infty} r\xi(r) \d r,
    \end{align*}
    and for every $1 \leq j \leq n-1$
    \begin{align*}
         \int_{D_4} \xi(|x|)&x_n\d\Phi^n_j(\mu  w_s + \lambda v_t;x)
         \\
         =&-\frac{\omega_{n-1}}{n-1} \left[\binom{n-1}{j} \sum_{k=1}^{j}\binom{j}{k}\lambda^{j-k}\mu^k s^{n-k-1} \int_{s}^{+\infty}r^{-j+k+1}\xi(r)\d r  \right.\\
         &\left. + \binom{n}{j}\frac{1}{n}\sum_{k=1}^{j-1}\binom{j}{k} (j-k) \lambda^{j-k}\mu^k s^{n-k+1} \int_{s}^{+\infty}r^{-j+k-1}\xi(r)\d r\right]
    \end{align*}
    and for $j =n$, we have 
    \begin{align*}
        \int_{D_4} \xi(|x|)&x_n\d\Phi^n_n(\mu  w_s + \lambda v_t;x) \\&=-\frac{\omega_{n-1}}{n-1} \sum_{k=1}^{n-1}\binom{n-1}{k}\lambda^{n-k}\mu^k  s^{n-k+1} \int_{s}^{+\infty} r^{k-n}\xi(r) \d r\\
         &=-\frac{\omega_{n-1}}{n(n-1)} \sum_{k=1}^{n-1}\binom{n}{k}(n-k)\lambda^{n-k}\mu^k  s^{n-k+1} \int_{s}^{+\infty} r^{k-n}\xi(r) \d r.
    \end{align*}
    \end{enumerate}
    Together with the region
    \[Z_{t<s}=\{x_n>0,\ |x|<t\}\cup\{x_n<0,\ |x|<t\},
    \]
   on which $\mu w_s+\lambda v_t$ is identically zero, and with the hyperplane
    \[H=\{x_n=0\},\]
    the sets
    \[C_1,\ldots,C_4,\quad D_1,\ldots,D_4,\quad Z_{t<s},\quad H\]
   form a disjoint partition of $\R^n$. The sets $Z_{t<s}$ and $H$ do not contribute to the integral. We collect the preceding contributions by grouping the terms according to their powers in $\lambda$ and $\mu$. The terms containing only $\lambda^j$ cancel. The only remaining term containing only $\mu^j$ is
    \[\mu^j s^{n-j+1}\xi(s).\]
    For \(1\leq k\leq j-1\), the terms containing $\lambda^{j-k}\mu^k$ combine into
    \[\frac{\omega_{n-1}}{n-1}\frac1n\binom{n}{j}\binom{j}{k}\lambda^{j-k}\mu^k s^{n-k+1}\left(s^{k-j}\xi(s)-(j-k)\int_s^\infty r^{k-j-1}\xi(r)\,\d r\right).
    \]
    By the definition of the inverse $\cR$-transform, the expression between parentheses is
    \[\mathcal R^{k-j}\xi(s).\]
    Thus, for $t<s$, we obtain
    \[\begin{aligned}
    \int_{\R^n}\xi(|x|)x_n\d\Phi_j^n(\mu w_s+\lambda v_t;x)&=\frac{\omega_{n-1}}{n-1}\frac1n\binom{n}{j}\left[\mu^js^{n-j+1}\xi(s)\right.\\&\qquad\left.+\sum_{k=1}^{j-1}\binom{j}{k}\lambda^{j-k}\mu^ks^{n-k+1}\cR^{k-j}\xi(s)
    \right].
    \end{aligned}\]
  \end{enumerate}
  The two cases can be written in the same form. Indeed, if $s\leq t$, then $\cR^{k-j}\xi(t)=\cR^{k-j}\xi(\max\{s,t\})$, while if $t<s$, then $\cR^{k-j}\xi(s)=\cR^{k-j}\xi(\max\{s,t\})$. Hence, for every $s,t > 0$,
    \[\begin{aligned}
    z_{j,\xi}(\mu w_s,\lambda v_t)&=\frac{\omega_{n-1}}{n-1}\frac1n\binom{n}{j}\left[\mu^j s^{n-j+1}\xi(s)\right.\\&\qquad\left.+
    \sum_{k=1}^{j-1}\binom{j}{k}\lambda^{j-k}\mu^ks^{n-k+1}\cR^{k-j}\xi(\max\{s,t\})\right]e_n .
    \end{aligned}\]
  \end{proof}	

\end{document}